\newtheorem{lemma}[equation]{Lemma}
\newtheorem{proposition}[equation]{Proposition}
\newtheorem{theorem}[equation]{Theorem}
\newtheorem{corollary}[equation]{Corollary}
\theoremstyle{definition}
\newtheorem{remark}[equation]{Remark}
\numberwithin{equation}{section}
\newcommand{\length}{\operatorname{length}}
\newcommand{\R}{\mathbb{R}}
\newcommand{\N}{\mathbb{N}}
\newcommand{\Z}{\mathbb{Z}}
\newcommand{\Ha}{\mathcal{H}}
\newcommand{\W}{\mathcal{W}}
\newcommand{\sub}{\subset}
\newcommand{\bdry}{\partial}
\newcommand{\vis}{\operatorname{vis}}
\newcommand{\vbdry}{\bdry^{\vis}}
\newcommand{\dist}{\operatorname{dist}}
\newcommand{\diam}{\operatorname{diam}}
\newcommand{\spt}{\operatorname{spt}}
\def\vint{\mathop{\mathchoice%
          {\setbox0\hbox{$\displaystyle\intop$}\kern 0.22\wd0%
           \vcenter{\hrule width 0.6\wd0}\kern -0.82\wd0}%
          {\setbox0\hbox{$\textstyle\intop$}\kern 0.2\wd0%
           \vcenter{\hrule width 0.6\wd0}\kern -0.8\wd0}%
          {\setbox0\hbox{$\scriptstyle\intop$}\kern 0.2\wd0%
           \vcenter{\hrule width 0.6\wd0}\kern -0.8\wd0}%
          {\setbox0\hbox{$\scriptscriptstyle\intop$}\kern 0.2\wd0%
           \vcenter{\hrule width 0.6\wd0}\kern -0.8\wd0}}%
          \mathopen{}\int}
\begin{document}




\title[Fractional Hardy inequalities and visibility]{Fractional Hardy inequalities\\ and visibility of the boundary}

\keywords{fractional Hardy inequality,
uniform fatness, 
visibility, Hausdorff content}
\subjclass[2010]{Primary 46E35; Secondary 26D15}


\author[L. Ihnatsyeva]{Lizaveta Ihnatsyeva}   
\address[L.I.]{Aalto University\\
Department of Mathematics and Systems Analysis \\
P.O. Box 11100 \\ 
FI-00076 Aalto, Finland\\
\and
University of Jyvaskyla\\
Department of Mathematics and Statistics \\
P.O. Box 35 (MaD) \\
FI-40014 University of Jyvaskyla, Finland}
\email{lizaveta.ihnatsyeva@aalto.fi}

\author[J. Lehrb\"ack]{Juha Lehrb\"ack}   
\address[J.L.]{University of Jyvaskyla\\
Department of Mathematics and Statistics \\
P.O. Box 35 (MaD) \\
FI-40014 University of Jyvaskyla, Finland}
\email{juha.lehrback@jyu.fi}

\author[H. Tuominen]{Heli Tuominen}   
\address[H.T.]{University of Jyvaskyla\\
Department of Mathematics and Statistics \\
P.O. Box 35 (MaD) \\
FI-40014 University of Jyvaskyla, Finland} 
\email{heli.m.tuominen@jyu.fi}

\author[A. V. V\"ah\"akangas]{Antti V. V\"ah\"akangas}
\address[A.V.V.]{University of Helsinki\\ Department of Mathematics and Statistics\\
P.O. Box 68\\ FI-00014 University of Helsinki, Finland\\ 
\and
University of Jyvaskyla\\
Department of Mathematics and Statistics \\
P.O. Box 35 (MaD) \\
FI-40014 University of Jyvaskyla, Finland} 
\email{antti.vahakangas@helsinki.fi}


\begin{abstract}
We prove fractional order Hardy inequalities on open sets under a combined
fatness and visibility condition on the boundary.
We demonstrate by counterexamples that fatness conditions alone are not  
sufficient for such Hardy inequalities to hold.
In addition, we give a short exposition of various fatness conditions 
related to our main result, and
apply fractional Hardy inequalities in connection to the boundedness of extension
operators for fractional Sobolev spaces.
\end{abstract}

    \maketitle

\section{Introduction}

In this paper, we consider the following
fractional $(s,p)$-Hardy inequalities:
\begin{equation}\label{e.hardy}
\int_G \frac{\lvert u(x)\rvert^p}{\dist(x,\partial G)^{sp}}\,dx
\le c\int_{G} \int_{G}
\frac{|u(x)-u(y)|^p}{|x-y|^{n+sp}}\,dy\,dx\,.
\end{equation}
We say that an open set $G\subset\R^n$ admits an $(s,p)$-Hardy
inequality, for $0<s<1$ and $1<p<\infty$, if there is a
constant $c>0$ such that inequality~\eqref{e.hardy} holds for
every $u\in C^\infty_0(G)$. 
Throughout the paper, we consider only proper open subsets 
$G\subsetneq\mathbb{R}^n$, and therefore the boundary
$\partial G$ is always non-empty.

One of our starting points is a result of Dyda \cite{Dyda}  
that a bounded Lipschitz domain admits an $(s,p)$-Hardy
inequality if and only if $sp>1$.
This can be viewed as an analogue of a result  
of Ne\v cas~\cite{Necas}, stating 
that if $G$ is a bounded Lipschitz domain and $1<p<\infty$, then there is $c>0$
such that the $p$-Hardy inequality
\begin{equation}\label{e.p-hardy}
\int_G \frac{\lvert u(x)\rvert^p}{\dist(x,\partial G)^p}\,dx
\le c \int_G \lvert \nabla u(x)\rvert^p\,dx
\end{equation}
holds for all $u\in C^\infty_0(G)$. For the $p$-Hardy inequality~\eqref{e.p-hardy} it is well understood that
the Lipschitz assumption can be weakened significantly: a canonical sufficient condition is that
the complement of $G$ is $(1,p)$-uniformly fat; we refer here to the works of Lewis~\cite{Lewis1988}
and Wannebo~\cite{Wannebo}. 
On the other hand, it follows easily from the ideas in~\cite{E-HSV} and~\cite{ihnatsyeva1}
that if the complement of an open set $G$ is $(s,p)$-uniformly fat,
then a `global' version of the $(s,p)$-Hardy inequality~\eqref{e.hardy}, namely the inequality
\begin{equation}\label{e.integration}
\int_G \frac{\lvert u(x)\rvert^p}{\dist(x,\partial G)^{sp}}\,dx\le c\int_{\R^n} \int_{\R^n}
\frac{|u(x)-u(y)|^p}{|x-y|^{n+sp}}\,dy\,dx\,,
\end{equation}
holds for every $u\in C^\infty_0(G)$;
here we understand that functions in $C^\infty_0(G)$ are extended as $0$ outside $G$;
see Section~\ref{s.applications} for more details.

The main objective of our present work is to examine how much the
Lipschitz condition can be relaxed without losing the
`localization' in inequality~\eqref{e.hardy}. In other words, we
aim to determine what kind of assumptions on $G$ allow for the
restriction of the integration in the right-hand side
of~\eqref{e.integration} to the set $G\times G$.
Based on a comparison with the $p$-Hardy inequality~\eqref{e.p-hardy}, 
a natural first candidate for such a condition
would be the complement of $G$ being $(s,p)$-uniformly fat.
However, already the example of Dyda~\cite[\S 2]{Dyda} gives an open set
$G$ whose complement is $(s,p)$-uniformly fat for all $1 <
p<\infty$ and $0<s<1$, but where $(s,p)$-Hardy inequalities
fail if $sp\le 1$. In fact, one can simply take $G$ to be the
unit ball of $\R^n$, or any other bounded Lipschitz domain. Notice
that in this case the \emph{boundary} of $G$ is locally $(s,p)$-uniformly
fat only when $sp>1$. Hence one could ask if it is
sufficient for inequality~\eqref{e.hardy} that the boundary is (locally)
$(s,p)$-uniformly fat. Again, it turns out that the answer is
negative, at least in the range $0<sp\le 1$, as we provide in
Section~\ref{s.counterexamples} examples of open sets
whose boundaries are (locally) $(s,p)$-uniformly fat, but which still fail to
admit $(s,p)$-Hardy inequalities.

The obstruction in our examples is that even though the boundary is uniformly fat, 
most of it is not `visible' from within the set.
Hence it seems that a right way to generalize the result of Dyda~\cite{Dyda} beyond Lipschitz domains 
is to apply conditions which combine both fatness and geometry of the boundary.
In our main result, Theorem~\ref{thm:fractional_hardy_visible}, we follow~\cite{kole} and
use a local \emph{visual boundary condition} given in terms of John curves. 
An illustrative consequence of Theorem~\ref{thm:fractional_hardy_visible}
is the following sufficient condition for uniform domains.

\begin{corollary}\label{cor:fractional_hardy_uniform}
Let $0<s<1$ and $1 < p<\infty$ be such that $0<sp<n$. Assume that $G\sub\R^n$
is a (bounded) uniform domain with a (locally) $(s,p)$-uniformly fat boundary.
Then $G$ admits an $(s,p)$-Hardy inequality.
\end{corollary}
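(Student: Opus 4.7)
The plan is to deduce the corollary directly from Theorem~\ref{thm:fractional_hardy_visible} by verifying that every uniform domain satisfies the local visual boundary condition appearing in that theorem. Recall that $G\subset\R^n$ is uniform if there is a constant $A\geq 1$ such that any two points $x,y\in G$ can be joined by a rectifiable curve $\gamma\subset G$ with $\length(\gamma)\leq A\lvert x-y\rvert$ and $\dist(z,\partial G)\geq A^{-1}\min\{\length(\gamma[x,z]),\length(\gamma[z,y])\}$ for every $z\in\gamma$. This is a purely geometric condition on $G$, and the fatness hypothesis enters only at the last step.

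First, I would upgrade the pairwise uniform condition to a boundary-attaining John carrot estimate: for each $\xi\in\partial G$ there is a curve $\gamma_\xi\colon[0,L]\to\overline{G}$ with $\gamma_\xi(0)=\xi$, $\gamma_\xi((0,L])\subset G$, $L\leq C\diam(G)$, and $\dist(\gamma_\xi(t),\partial G)\geq ct$ for all $t\in[0,L]$, with constants $C$ and $c$ depending only on $A$. This is standard: fix a John center $x_0\in G$ (which exists since $G$ is bounded), pick a sequence $y_k\in G$ converging to $\xi$, apply the uniform condition to each pair $(x_0,y_k)$, and extract a subsequential limit. The interior-distance inequality forces the resulting curve to lie inside $G$ except at its endpoint $\xi$ and to satisfy the required carrot estimate near $\xi$.

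Second, I would feed this into the local visual boundary condition of~\cite{kole} used in Theorem~\ref{thm:fractional_hardy_visible}. Since every $\xi\in\partial G$ is approached by a John curve from the interior with constants depending only on $A$, the local visible boundary $\vbdry G$ coincides quantitatively with $\partial G$ in the sense relevant for the theorem. Consequently, the assumed local $(s,p)$-uniform fatness of $\partial G$ translates directly to the corresponding fatness of $\vbdry G$, which together with $0<sp<n$ allows a direct invocation of Theorem~\ref{thm:fractional_hardy_visible} to produce the $(s,p)$-Hardy inequality~\eqref{e.hardy}.

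The main obstacle is the first step: constructing boundary-terminating John curves with the correct carrot estimate $\dist(\gamma_\xi(t),\partial G)\geq ct$ out of the pairwise uniformity condition. Once this geometric passage is justified, the corollary reduces to a clean application of Theorem~\ref{thm:fractional_hardy_visible} together with the boundary fatness hypothesis.
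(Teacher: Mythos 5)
There is a genuine gap, and it is the central one. Theorem~\ref{thm:fractional_hardy_visible} does not take a fatness (capacity) hypothesis: its assumption \eqref{eq:main cond} is a \emph{Hausdorff content} lower bound $\Ha^\lambda_\infty(\bdry^{\vis}_{x,c}G)\geq C_0\dist(x,\bdry G)^\lambda$ with an exponent $\lambda$ \emph{strictly} larger than $n-sp$ (this strictness is essential: in the proof one must choose $\beta q$ with $n-\lambda<\beta q<sp$, which is impossible if $\lambda=n-sp$). Your proposal asserts that ``the assumed local $(s,p)$-uniform fatness of $\partial G$ translates directly to the corresponding fatness of $\vbdry G$, which \dots allows a direct invocation'' of the theorem, but no such direct invocation is possible: you must first convert the Riesz-capacity density of $\partial G$ into a Hausdorff content density with exponent $\lambda>n-sp$. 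The cheap comparison $R_{s,p}(E)\le C\Ha^{n-sp}_\infty(E)$ (Lemma~\ref{lemma:Riesz bounds Hausdorff}) only yields the borderline exponent $\lambda=n-sp$, which does not suffice. The paper bridges this via Lewis's deep self-improvement of uniform fatness (Theorem~\ref{lewis}): $(s,p)$-fatness upgrades to $(s,q)$-fatness for some $q<p$, whence $\Ha^{n-sq}_\infty(B(x,r)\cap\partial G)\gtrsim r^{n-sq}$ with $\lambda=n-sq>n-sp$ (Proposition~\ref{prop:fatness implies density}). This step is entirely absent from your argument, and without it the corollary does not follow from the theorem.

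A secondary issue concerns your geometric step. You build John curves from boundary points to one fixed center $x_0$, which only shows that the whole boundary is visible from $x_0$. Condition \eqref{eq:main cond} is pointwise in $x\in G$ and at scale $\dist(x,\bdry G)$: for each $x$ you need enough boundary, namely $\bdry G\cap B(x,2\dist(x,\bdry G))$, to lie in $\bdry^{\vis}_{x,c}G$, i.e.\ to be reachable by $c$-John curves with center $x$ inside a John subdomain $G_{x,c}$. This is exactly Proposition~\ref{prop:unif}, and its proof applies the uniformity condition to pairs $(x,y_k)$ with $y_k$ tending to a boundary point $w$ with $\lvert x-w\rvert\le 2\dist(x,\bdry G)$; the restriction to this ball is what makes the carrot estimate hold near the $x$-end of the curve. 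Your fixed-center construction neither localizes at $x$ nor controls the scale, so even the visibility half of the argument needs to be redone in the localized form before the (missing) capacity-to-content conversion can be fed into Theorem~\ref{thm:fractional_hardy_visible}.
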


Besides bounded Lipschitz domains (for $sp>1$) and domains above
the graphs of Lipschitz functions (also considered by
Dyda~\cite{Dyda}), Corollary~\ref{cor:fractional_hardy_uniform}
covers, for instance, the domain $G\sub\R^2$ bounded by the usual
von Koch snowflake curve of Hausdorff dimension $\lambda={\log
4}/{\log 3}$, or, in fact, any domain inside an analogous curve of
any dimension $\lambda\in (1,2)$. It is indeed well known that
these domains are uniform. Moreover, the boundary of such
a domain $G$ is a $\lambda$-regular set, and therefore the boundary is
locally $(s,p)$-uniformly fat for $sp>2-\lambda$ (cf.\
Proposition~\ref{t.lambda_sets_are_fat}). It then follows from
Corollary~\ref{cor:fractional_hardy_uniform} that $G$ admits an
$(s,p)$-Hardy inequality if (and only if) $sp>2-\lambda$.
(For the converse, we refer to~\cite[\S 2]{Dyda} and the estimates
in Section~\ref{s.sel}.) Nevertheless, without going into the details
in this section, we mention that
Theorem~\ref{thm:fractional_hardy_visible} can be applied to far
more general open sets than just uniform domains. Let us also remark that
in the recent paper~\cite{DV}, related fractional Hardy inequalities are studied using different methods. 
One consequence of the general framework in~\cite{DV} is that in Corollary~\ref{cor:fractional_hardy_uniform} 
the assumption that $G$ is 
uniform can actually be replaced by a plumpness condition for $G$. 
On the other hand, our Theorem~\ref{thm:fractional_hardy_visible}
can be applied to many non-plump cases in which the results in~\cite{DV} do not apply directly,
for instance to domains having outer cusps.

Fractional Hardy inequalities, and their generalizations, have been
enjoying a reasonable amount of interest during the last few
years. One of the reasons for this is that they can be used to deliver spectral information on the
generators of the so-called censored stable processes; see e.g.~\cite{Dyda},
and the references therein, for more details.
In addition to the aforementioned results and references, Loss and
Sloane~\cite{LossSloane2010} established fractional Hardy type
inequalities with sharp constants, but they used different
distance functions in the left-hand side of~\eqref{e.hardy}.
However, in a convex domain $G$ their distance functions are
majorized by $\dist(x,\bdry G)$, and thus one obtains sharp
fractional inequalities~\eqref{e.hardy} for convex domains. Dyda
and Frank~\cite{DydaFrank} further improved the results of Loss
and Sloane into the so-called fractional Hardy--Sobolev--Mazya
inequalities. See
also~\cite{BogdanDyda,Dyda2,FMT,FrankSeiringer,Sloane} for related
results. For one-dimensional fractional Hardy inequalities we
refer to~\cite[Chapter~5]{KufnerPersson} and the references
therein. In the present paper, we restrict ourselves completely to
the case where the boundary of $G$ is `thick', for instance in the
sense of the uniform fatness condition. On the other hand, it is
well-known that Hardy inequalities, in general, can be valid also
when the boundary (or the complement) is `thin' enough. For
fractional Hardy inequalities, the thin case has been examined
systematically in \cite{ihnatsyeva2, DV}; the work of Dyda~\cite{Dyda} contains some particular instances of thin
complements as well.

The outline of the rest of the paper is as follows: In Section~\ref{s.notation} 
we provide definitions, notation, and other
basic tools; for instance, the definition of uniform fatness, based on
Riesz capacities, and the different John and uniformity conditions
related to the visibility can be found here. Section~\ref{s.comparison} 
contains comparison results for various
density conditions. In particular, we relate conditions given in
terms of capacities (as in uniform fatness) to conditions for
Hausdorff contents. Our main result,
Theorem~\ref{thm:fractional_hardy_visible}, is stated and proved
in Section~\ref{s.proof_main}. The proof is based on a combination
of `pointwise Hardy' techniques developed in~\cite{kole,KLT,LeLip}
and maximal function arguments, similar to those
in~\cite{E-HSV,ihnatsyeva1}. In Section~\ref{s.counterexamples} we construct
the counterexamples showing that $(s,p)$-uniform fatness
of the boundary $\bdry G$ does not suffice for $G$ to admit an
$(s,p)$-Hardy inequality. We close the paper in Section~\ref{s.applications}
 with some applications and additional results
related to the extension of functions in the fractional Sobolev
space $ W^{s,p}(G)$ and to the removability of sets with respect
to $(s,p)$-Hardy inequalities.

\section{Notation and preliminaries}\label{s.notation}

\subsection{Basic notation and Whitney cubes}
We follow the standard convention that the letters $C$ and $c$ 
denote positive constants whose values are not necessarily the 
same at each occurrence. If there exist constants $c_1,c_2>0$ such that $c_1\,a\le
b\le c_2 a$, we sometimes write $a\simeq b$ and say that $a$ and $b$ are comparable.

When $E\subset\R^n$,
the characteristic function of $E$ is denoted by $\chi_E$, the boundary of $E$ is written
as $\partial E$, and $\lvert E\rvert$ is the $n$-dimensional Lebesgue measure of $E$.
The complement of a set $G\subset\R^n$ is denoted by $G^c$.
The integral average of a locally integrable function $f$ over a bounded set $E$ with a positive measure 
is written as
\[
f_E := \vint_E f\, dx := \frac1{\rvert E \lvert}\int_E f\, dx\,.
\]
The support of a function $f\colon\R^n\to \mathbb{C}$ is denoted by
$\mathrm{spt}(f)$, and it is the closure of the set  $\{x\,:\,f(x)\not=0\}$ in $\R^n$. If $\mu$ is a Borel measure in $\R^n$,
the support of $\mu$, denoted by $\mathrm{spt}(\mu)$,
is the smallest closed set $E$ such that $\mu(\R^n\setminus E)=0$.

The open ball centered at $x\in\R^n$ and of radius $r>0$ is denoted by
$B(x,r)$, and $Q$ always denotes a cube in $\R^n$ with sides parallel to
the coordinate axes; we write $x_Q$ for the center of the cube $Q$ and
$\ell(Q)$ for its side length. For $L>0$, we write $L Q$ for the
dilated cube with side length $L \ell(Q)$ and center $x_Q$.

The family of closed dyadic cubes is denoted by $\mathcal{D}$, and
$\mathcal{D}_j$ is the family of dyadic cubes of side length
$2^{-j}$, $j\in\Z$. For a proper open set $G$, we fix its
Whitney decomposition $\W=\W(G)\subset\mathcal{D}$, and write
\[ \W_j=\W_j(G) := \W(G)\cap\mathcal{D}_j \] for $j\in\Z$.
Recall that $G=\cup_{Q\in\mathcal{W}(G)} Q $ and each Whitney cube $Q\in\W(G)$ satisfies
\begin{equation}\label{dist_est}
  \diam(Q)\le \dist(Q,\partial G)\le 4\diam(Q)\,.
\end{equation}
For the construction and other properties of Whitney cubes we refer to \cite[VI.1]{S}.

\subsection{Riesz potentials and uniform fatness}\label{s.riesz}

The Riesz $s$-potentials, $0<s<n$, of a measurable
function $f$ and a Borel measure $\mu$ on $\R^n$ are given by, respectively,
\[
\mathcal{I}_{s} f(x) =\int_{\R^n} \frac{f(y)}{|x-y|^{n-s}}\,dy\quad \text{ and }\quad\mathcal{I}_s \mu(x)
=\int_{\R^n} \frac{1}{|x-y|^{n-s}}\,d\mu(y)\,.
\]
If $0 < sp <n$ and $1<p<\infty$, the $(s,p)$-outer capacity of a set $E$ in $\R^n$  is 
\[
R_{s,p}(E)=\inf\big\{ \lvert\lvert f\rvert\rvert_p^p\,:\, f\ge 0\text{ and }\mathcal{I}_{s}f\ge1\text{ on }E\big\}\,.
\]
Following \cite{Lewis1988}, we say that a set $E$ is locally $(s,p)$-uniformly fat for $0<sp<n$, $1<p<\infty$,
if there exist positive constants $r_0$ and $\sigma$ such that
\begin{equation}\label{eq.fatness}
R_{s,p}(B(x,r)\cap E)\ge \sigma r^{n-s p}
\end{equation}
for every $x\in E$ and $0<r<r_0$.
If inequality \eqref{eq.fatness} holds for every
$x\in E$ and every $r>0$, we say that $E$ is $(s,p)$-uniformly fat.

\subsection{Hausdorff measures and regular sets}
The \emph{$\lambda$-Hausdorff content} of a set $E \sub\R^n$ is 
\[
\Ha^\lambda_\infty(E)=\inf\bigg\{\sum_{i=1}^{\infty}r_i^\lambda :
 E\sub\bigcup_{i=1}^\infty B(x_i,r_i),\ r_i>0 \bigg\},
\]
where we may assume that $x_i\in E$, since this increases the infimum at most by a constant factor.
Moreover, as is easily seen, we may allow finite coverings in the infimum above.
The $\lambda$-Hausdorff measure is denoted by $\mathcal{H}^\lambda$, for the definition we refer to \cite[\S 4]{Mattila}.
 Recall that the \emph{Hausdorff dimension} of $E$ is the number
\[
\dim_{\mathcal{H}}(E)=\inf\{\lambda>0 : \Ha^\lambda_\infty(E)=0\}= \inf\{\lambda>0 : \Ha^\lambda(E)=0\}.
\]

Let $0<\lambda\le n$.
A closed set $E\subset \R^n$ is said to be an {\em (Ahlfors) $\lambda$-regular set},
or sometimes simply a $\lambda$-set, if there is a constant $C>1$ such that
\begin{equation}\label{e.lambda_set}
C^{-1}r^\lambda \le \mathcal{H}^\lambda(B(x,r)\cap E)\le Cr^\lambda
\end{equation}
for every $x\in E$ and all $0<r < \diam(E)$.

\subsection{John domains and visual boundary}\label{sect: visual}

We say that a domain (an open and connected set) $G\sub\R^n$ is a \emph{$c$-John domain}, $c\geq 1$, with center point $x_0$,
if for every $x\in G$ there exists a curve (called a John curve) $\gamma\colon [0,\ell]\to G$,
parameterized by arc length, such that $\gamma(0)=x$, $\gamma(\ell)=x_0$, and
\begin{equation}\label{eq: John}
\dist(\gamma(t),\bdry G)\geq\tfrac 1 c t
\end{equation}
for each $t\in[0,\ell]$.
If $G$ is a $c$-John domain with center point $x_0$, then
\begin{equation}\label{e.john_inclusion}
G\sub B\big(x_0, c\,\dist(x_0,\bdry G)\big)\,;
\end{equation}
in particular, $G$ is bounded.
Also, if $ G$ is a $c$-John domain, then for each
$w\in\bdry G$ there is a curve $\gamma\colon [0,\ell]\to G\cup\{w\}$
joining $w$ to $x_0$ and satisfying \eqref{eq: John}.
We say in this case, too, that $\gamma$ joins $w$ to $x_0$ {in} $ G$.

When $ G\sub\R^n$ is an open set, $x\in G$, and $c\geq 1$ is a constant,
we define a subdomain $ G_{x,c}$ by
\[
 G_{x,c}= 
  \bigcup\,\{U\sub G :
  U \text{ is a } c\text{-John domain with center point } x\}.
\]
Then clearly $\emptyset\neq G_{x,c}\sub G$ and $ G_{x,c}$ is also a $c$-John domain
with center point $x$. Following~\cite{kole}, we say that the set
\[
\bdry^{\vis}_{x,c} G := \bdry G\cap\bdry G_{x,c}
\]
is the \emph{$c$-visual boundary of $ G$ near} $x$.

A domain $ G\sub\R^n$ is a \emph{uniform domain} if there is a
constant $C\geq 1$ such that each pair of points $x,y\in G$ can be
joined by a curve $\gamma\colon [0,\ell]\to G$, parameterized by arc
length, so that $\ell\leq C \lvert x-y\rvert$ and $\dist(z,\bdry
G)\geq \frac 1 C \min\{\lvert z-x\rvert, \lvert z-y\rvert\}$ for
each $z\in\gamma$\,.
Every bounded uniform domain is also a $c$-John domain for some $c\geq 1$.

\section{Comparison results}\label{s.comparison}

In this expository section, we study the connections between
uniform fatness and thickness conditions formulated in
terms of Hausdorff contents; thereby we clarify the relations of
the conditions which commonly appear in connection to our main theorem.

More precisely, in our main result, Theorem~\ref{thm:fractional_hardy_visible}, we assume that the visual
boundary is uniformly large near each point $x\in G$, in the sense
of the $\lambda$-Hausdorff content for an appropriate exponent $\lambda>0$,
as follows:
\begin{equation}\label{eq:main cond_intro}
  \Ha^\lambda_\infty\big(\bdry^{\vis}_{x,c} G\big)
            \geq C_0 \dist(x,\bdry  G)^\lambda,
\end{equation}
where $C_0>0$ and $c\ge 1$ are independent of $x$.
A closely related condition is an {\it inner boundary density
condition} with an exponent $\lambda$, which is satisfied by an open set
$G\subset\R^n$ if there exists a constant $C>0$ such that
\begin{equation}\label{eq: boundarydensity}
\mathcal{H}^\lambda_\infty\big(\partial G \cap B(x,2\dist(x,\partial G))\big) \ge C\dist(x,\partial G)^\lambda
\end{equation}
for all $x\in G$ (cf.~\cite{LePoint}).

Our main assumption \eqref{eq:main cond_intro} is stronger than 
the inner boundary density condition~\eqref{eq: boundarydensity}
with the same exponent $\lambda$. Indeed, one may apply
relation \eqref{e.john_inclusion} and the observation that an open
set $G\sub\R^n$ satisfies condition \eqref{eq:main cond_intro} if
and only if for each $x\in G$ there exists \emph{some} $c$-John
domain $U_x\sub G$, with center point $x$, such that
\[
  \Ha^\lambda_\infty(\bdry U_x\cap\bdry G)
            \geq C_0 \dist(x,\bdry G)^\lambda\,.
\]
On the other hand, there is a wide class of domains for which a
converse is true as well. For instance, a uniform domain $G$
satisfies condition~\eqref{eq:main cond_intro} if and
only if $G$ satisfies condition~\eqref{eq: boundarydensity}. In
fact, for uniform domains the visual boundary near $x$ coincides
with the  boundary (near  $x$), as the following adaptation of
\cite[Proposition~4.3]{kole} shows.
\begin{proposition}\label{prop:unif}
Assume that $G\sub\R^n$ is a uniform domain. Then there exists a
constant $c\geq 1$, depending only on $n$ and the constant in the
uniformity condition for $G$, such that for all $x\in G$
\[\bdry G\cap B(x,2\dist(x,\bdry G)) \sub \vbdry_{x,c}G\,.\]
\end{proposition}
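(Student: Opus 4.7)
The plan is to approximate $w$ by interior points of $G$ that already lie in $G_{x,c}$, for a constant $c$ depending only on $n$ and the uniformity constant $C$ of $G$. Set $d:=\dist(x,\bdry G)$ and fix $w\in\bdry G\cap B(x,2d)$. I will produce, for each $y\in G$ with $|y-w|$ sufficiently small, a $c$-John subdomain $U_y\sub G$ with center point $x$ such that $y\in U_y$. Then $y\in G_{x,c}$, and letting $y\to w$ forces $w\in\overline{G_{x,c}}$; since $w\in\bdry G$ gives $w\notin G_{x,c}\sub G$, this yields $w\in\bdry G_{x,c}\cap\bdry G=\vbdry_{x,c}G$, as required.

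The construction of $U_y$ proceeds as follows. By the uniformity of $G$, there exists an arc-length parameterized curve $\gamma\colon[0,L]\to G$ with $\gamma(0)=x$, $\gamma(L)=y$, $L\leq C|x-y|$, and $\dist(\gamma(s),\bdry G)\geq C^{-1}\min\{|\gamma(s)-x|,|\gamma(s)-y|\}$ for all $s\in[0,L]$. A standard selection allows one to further assume that $\gamma$ is quasi-geodesic, so that $|\gamma(s)-\gamma(s')|$ is comparable to $|s-s'|$. I then thicken $\gamma$ into the carrot-type subdomain
\[
U_y:=\bigcup_{s\in[0,L]} B\bigl(\gamma(s),\lambda\,\delta(s)\bigr),\qquad \delta(s):=\min\{|\gamma(s)-x|,|\gamma(s)-y|\},
\]
for a small constant $\lambda=\lambda(n,C)\in(0,C^{-1})$. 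The uniformity bound then ensures $B(\gamma(s),\lambda\,\delta(s))\sub G$ for every $s$, so $U_y$ is an open connected subdomain of $G$ containing both $x$ and $y$.

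The main step is to show that $U_y$ is a $c$-John domain with center $x$, with $c=c(n,C)$. Given $z\in U_y$, pick $s_0\in[0,L]$ with $z\in B(\gamma(s_0),\lambda\,\delta(s_0))$ and join $z$ to $x$ by the concatenation $\sigma$ of the segment $[z,\gamma(s_0)]$ with the reverse of $\gamma|_{[0,s_0]}$. On the initial segment the factor $\lambda$ provides enough room that $\dist(p,\bdry U_y)$ grows at least linearly in the arc length from $z$; on the $\gamma$-portion, the quasi-geodesic property relates $\delta(\gamma(s))$ to $s$, and hence to the arc length from $z$ along $\sigma$, producing the desired linear lower bound.

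The main obstacle is precisely this last verification: converting the uniformity and quasi-geodesic information on $\gamma$ into a uniform linear lower bound for $\dist(\cdot,\bdry U_y)$ along $\sigma$, with constants depending only on $n$ and $C$. This requires a careful choice of $\lambda$ and closely parallels the argument in \cite[Proposition~4.3]{kole}.
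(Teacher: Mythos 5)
Your overall strategy is the same one the paper relies on (its "proof" is an adaptation of \cite[Proposition~4.3]{kole}): approximate $w$ by points $y\in G$, enclose a uniform curve from $x$ to $y$ in a carrot-type $c$-John subdomain with center $x$ and a constant depending only on $n$ and the uniformity constant, and let $y\to w$. However, the carrot you actually write down fails. With radius $\lambda\,\delta(s)$, $\delta(s)=\min\{|\gamma(s)-x|,|\gamma(s)-y|\}$ and $\lambda<1$, the set $U_y$ pinches at both endpoints: every ball $B(\gamma(s),\lambda\delta(s))$ has radius at most $\lambda\,|\gamma(s)-x|<|\gamma(s)-x|$, so $x\notin U_y$ (and $y\notin U_y$). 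Hence $U_y$ is not a $c$-John domain with center point $x$ for \emph{any} choice of $\lambda$; indeed a $c$-John domain $U$ with center $x$ must satisfy $\dist(x,\bdry U)\ge \ell/c>0$, so no refinement of the final verification can rescue a domain whose boundary contains $x$. Your claim that ``$U_y$ contains both $x$ and $y$'' is false, and the sketched John estimate breaks exactly on the $\gamma$-portion near $x$, where $\delta(s)\to 0$ while the arc length travelled from $z$ does not.

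The repair is where the hypothesis $w\in B(x,2\dist(x,\bdry G))$ --- which your sketch never invokes --- must enter. Replace $\delta(s)$ by, say, $\min\{|\gamma(s)-y|,\dist(x,\bdry G)/2\}$ (equivalently, adjoin the ball $B(x,\dist(x,\bdry G)/2)$): uniformity together with $\dist(\gamma(s),\bdry G)\ge \dist(x,\bdry G)-|\gamma(s)-x|$ still gives $B(\gamma(s),\lambda\delta(s))\sub G$ for small $\lambda=\lambda(C)$, now $x$ lies in the carrot with $\dist(x,\bdry U_y)\gtrsim\dist(x,\bdry G)$, and the John constant is controlled by the ratio of $\length(\gamma)\le C|x-y|$ to $\dist(x,\bdry G)$, which is bounded precisely because $|x-y|\le 2\dist(x,\bdry G)+|w-y|$. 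Without that restriction the statement would be false, since by \eqref{e.john_inclusion} one has $G_{x,c}\sub B\big(x,c\,\dist(x,\bdry G)\big)$, so boundary points far from $x$ are never $c$-visible. Two further remarks: the failure of $y\in U_y$ is harmless, because $\gamma(s)\in U_y$ for small $s$ already produces points of $G_{x,c}$ arbitrarily close to $w$; but the ``quasi-geodesic'' normalization you invoke is a genuine imported ingredient (e.g.\ quasihyperbolic geodesics in uniform domains, or the equivalence with the length-cigar form of uniformity), and it is still needed after the repair to relate $\delta(s)$ to arc length along $\gamma$, so it deserves a precise reference rather than ``a standard selection''.
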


The inner boundary density condition~\eqref{eq: boundarydensity} is satisfied with an exponent 
$\lambda>n-sp$, e.g.\ in the
case of a (bounded) open set with a (locally) $(s,p)$-uniformly fat
boundary, where $1<p<\infty$ and $0<sp<n$. This fact is a consequence of 
Proposition~\ref{prop:fatness implies density} below. Nevertheless, we remark that
the fatness of the \emph{boundary} is not necessary for either of the
conditions~\eqref{eq:main cond_intro} and~\eqref{eq: boundarydensity} to hold,
as domains with outward cusps show. 

More generally, the following Theorem~\ref{t.equiv}
gives a precise connection between the inner
boundary density condition and the uniform fatness of the
complement of an open set; as indicated, the case $0<sp\le
1$ is of particular interest. The theorem mostly restates, and
also slightly extends, some of the results obtained in~\cite{LePoint}.

\begin{theorem}\label{t.equiv}
Let $1<p<\infty$ and  $0<sp<n$, and consider the following conditions
for an open set $G\subset\R^n$: 
\begin{itemize}
\item[(1)] the complement $G^c$ is $(s,p)$-uniformly fat; 

\item[(2)] there exists $n-sp<\lambda\le n$ and $C>0$ such that, for all $r>0$ and 
for every $x\in G^c$,
$\mathcal{H}^\lambda_\infty(B(x,r)\cap G^c) \ge Cr^\lambda$\,;
\item[(3)] $G$ satisfies~\eqref{eq: boundarydensity}
with an exponent $n-sp< \lambda \le n$.
\end{itemize}
Then conditions (1) and (2) are (quantitatively) equivalent, and 
condition (3) implies (1) and (2). Moreover, for $sp>1$, all
of the conditions (1)--(3) are equivalent; here the assumption
$sp>1$ cannot be relaxed in general.
\end{theorem}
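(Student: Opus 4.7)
The proof is essentially contained in, or close to, the results of~\cite{LePoint}, so my plan is to unpack the main steps.

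First, the equivalence (1) $\Leftrightarrow$ (2) is the classical capacity--content correspondence for the Riesz $(s,p)$-capacity. For (2) $\Rightarrow$ (1), given $x \in G^c$ and $r > 0$, I would apply Frostman's lemma to $E := B(x, r) \cap G^c$ to obtain a positive Radon measure $\mu$ on $E$ with $\mu(E) \ge c\,r^\lambda$ and $\mu(B(y, t)) \le t^\lambda$ for all $y \in \R^n$ and $t > 0$. The hypothesis $\lambda > n - sp$ then allows a Wolff-potential estimate for $\|\mathcal{I}_s \mu\|_{L^{p'}}$, and inserting the corresponding admissible density into the dual formulation of $R_{s,p}$ yields $R_{s,p}(E) \ge c'\,r^{n - sp}$. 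For the converse (1) $\Rightarrow$ (2), I would invoke the Lewis-type self-improvement of $(s,p)$-uniform fatness, which upgrades $(s,p)$-fatness to $(\tilde s, \tilde p)$-fatness with $\tilde s\,\tilde p < sp$; a Frostman-type argument then translates this into the desired Hausdorff content bound with exponent $\lambda = n - \tilde s\,\tilde p > n - sp$.

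Next, (3) $\Rightarrow$ (1), (2) relies on the inclusion $\partial G \sub G^c$ combined with the equivalence just obtained. Condition (3) is precisely a Hausdorff $\lambda$-density of $\partial G$ at the Whitney scale near every point of $G$, and a covering argument propagates this to content density of $G^c$ at arbitrary points and scales. Concretely, given $x \in G^c$ and $r > 0$, if $B(x, r/2) \sub G^c$ the bound is immediate from $\mathcal{H}^\lambda_\infty(B(x,r/2)) \simeq r^\lambda$; otherwise one selects a Whitney cube $Q \in \W(G)$ with $\ell(Q)$ comparable to $r$ at distance at most $C r$ from $x$, applies (3) at its center $x_Q$ to produce $\mathcal{H}^\lambda_\infty(\partial G \cap B(x_Q, 2\ell(Q))) \ge c\,r^\lambda$, and then transfers this content to $B(x, C'r) \cap G^c$ by $\partial G \sub G^c$ and the monotonicity of Hausdorff content.

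Finally, the implication (1) $\Rightarrow$ (3) under the extra assumption $sp > 1$ is the delicate step, and I would follow the capacitary argument of~\cite{LePoint}. For $x \in G$ with $d = \dist(x, \partial G)$, uniform fatness gives $R_{s,p}(B(x, 2d) \cap G^c) \ge \sigma d^{n-sp}$, while a standard estimate for the $(s,p)$-capacity of thin tubular neighborhoods shows that the portion of $B(x, 2d) \cap G^c$ separated from $\partial G$ by a margin $\varepsilon d$ contributes at most $C \varepsilon^{sp-1} d^{n-sp}$ to this capacity, \emph{provided} $sp > 1$. Letting $\varepsilon \to 0$ forces the capacity to concentrate on an arbitrarily thin neighborhood of $\partial G$, and applying the equivalence (1) $\Leftrightarrow$ (2) to that neighborhood yields (3). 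This tube estimate is both the main obstacle and the precise source of the sharpness statement: for $sp \le 1$ the exponent $sp - 1$ is non-positive, the bound is vacuous, and one can construct counterexamples such as uniformly $(s,p)$-fat sparse unions of solid $n$-dimensional pieces whose boundaries carry strictly smaller Hausdorff content, analogous to the constructions in Section~\ref{s.counterexamples}.
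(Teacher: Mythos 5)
Your treatment of the equivalence (1)\,$\Leftrightarrow$\,(2) is essentially the paper's: Lewis's self-improvement for (1)\,$\Rightarrow$\,(2), and Frostman's lemma plus a Wolff-potential estimate and the dual definition of $R_{s,p}$ for (2)\,$\Rightarrow$\,(1). One small correction there: the step you call ``a Frostman-type argument'' in (1)\,$\Rightarrow$\,(2) is really the elementary comparison $R_{\tilde s,\tilde p}(E)\le C\,\Ha^{n-\tilde s\tilde p}_\infty(E)$ (subadditivity plus the capacity of balls); Frostman goes in the opposite direction. The genuine problems are in the two implications involving (3). For (3)\,$\Rightarrow$\,(2) your dichotomy is incomplete: if $B(x,r/2)\not\sub G^c$, there is in general \emph{no} Whitney cube $Q\in\W(G)$ with $\ell(Q)\simeq r$ within distance $Cr$ of $x$ --- near $x$ the set $G$ may consist only of thin pieces whose Whitney cubes are much smaller than $r$, while condition \eqref{eq: boundarydensity} can still hold because it is a condition at each point's own scale $\dist(\cdot,\partial G)$. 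In that case your argument only produces content bounds at scales $\ll r$, and Hausdorff content is not superadditive over disjoint small pieces, so these cannot simply be summed; the missing case is precisely the one where $G^c$ fills most of $B(x,r)$ without containing a fixed ball and without any deep point of $G$ nearby. The paper does not reprove this implication at all: it cites the (more involved) covering argument of~\cite{LePoint}, pp.~2197--2198.

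The step (2)\,$\Rightarrow$\,(3) for $sp>1$ is also wrong as proposed. The claimed tube estimate --- that the part of $B(x,2d)\cap G^c$ at distance at least $\varepsilon d$ from $\partial G$ has capacity at most $C\varepsilon^{sp-1}d^{n-sp}$ --- is false: take $G=\R^n\setminus \bar B(0,1)$ and $x$ with $d=\dist(x,\partial G)=1$; then this ``deep'' part contains a solid chunk of the unit ball of diameter comparable to $d$ and has $R_{s,p}$-capacity $\simeq d^{n-sp}$ for every small $\varepsilon$. So nothing forces the capacity to concentrate near $\partial G$, and the limiting argument collapses (moreover, even granted such concentration, you would get fatness of $\partial G$ only at the specific location--scale pairs $(w,d(x))$, which is weaker than the uniform fatness needed to run the self-improvement/content step). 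The reason this implication does hold for $sp>1$ is of a different nature, roughly that at scales where both $G$ and the interior of $G^c$ are present the boundary must carry $(n-1)$-dimensional content $\gtrsim d^{\,n-1}$, and $n-1>n-sp$ exactly when $sp>1$; the paper again only cites a careful inspection of the proofs in~\cite{LePoint} for this. Finally, for sharpness your ``sparse unions of solid pieces'' is in the right spirit but unverified; the paper's example is simpler: $G=B(0,1)$, whose complement is $(s,p)$-uniformly fat, while $\Ha^\lambda_\infty(\partial G)=0$ for every $\lambda>n-1\ge n-sp$ when $sp\le 1$, so \eqref{eq: boundarydensity} fails for every admissible exponent.
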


\begin{proof}
The equivalence (1) $\Leftrightarrow$ (2) is a consequence of
Propositions~\ref{prop:fatness implies density} and~\ref{p.density_implies_fatness} proved in 
Sections~\ref{s.thick_1} and~\ref{s.thick_2} below. We remark that these propositions are
most likely known to the experts, and they are based on ideas in \cite[\S 5]{AH},
but we include some details for the sake of exposition.

The implication (3) $\Rightarrow$ (2) is proved in~\cite[pp.\ 2197--2198]{LePoint}.
Furthermore, if $sp>1$, then the
converse implication follows by a careful inspection of the proofs
in~\cite{LePoint}; note that in~\cite{LePoint} the claims are formulated for
domains, but they actually hold for all open sets.
The ball $G=B(0,1)$ serves as a counterexample,
showing that the assumption $sp>1$ is necessary for the
implication (2) $\Rightarrow$ (3) to hold in general (recall that
$\partial G$ has Hausdorff dimension $n-1$, hence
$\mathcal{H}^\lambda_\infty(\partial G)=0$ if $n-1<\lambda \le
n$). 
 \end{proof}

\subsection{Uniform fatness $\Rightarrow$ thickness}\label{s.thick_1}
(By `thickness'  we refer to density in terms
of Hausdorff contents, see condition (2) in Theorem~\ref{t.equiv}.)

We begin with an easy bound for Riesz capacities in terms of Hausdorff contents:

\begin{lemma}\label{lemma:Riesz bounds Hausdorff}
If $1<p<\infty$ and $0 < sp <n$, then there is a constant $C>0$
such that $R_{s,p}(E) \le C \Ha^{n-sp}_\infty(E)$
 for all sets $E\sub \R^n$.
\end{lemma}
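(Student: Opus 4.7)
The plan is to reduce the estimate to bounding the Riesz capacity of a single ball and then to sum over a near-optimal ball covering realizing the Hausdorff content. First I would establish the single-ball bound $R_{s,p}(B(x_0,r)) \le C\, r^{n-sp}$ via an explicit admissible test function: take $f = c\, r^{-s}\chi_{B(x_0,r)}$, and note that since $s-n<0$ and $|x-y|\le 2r$ whenever $x,y\in B(x_0,r)$, we have $|x-y|^{s-n}\ge (2r)^{s-n}$, so
\[
\mathcal{I}_s f(x) \ge c\, r^{-s}(2r)^{s-n}|B(x_0,r)| = c_n c
\]
for every $x\in B(x_0,r)$. Fixing $c = c_n^{-1}$ makes $f$ admissible in the definition of $R_{s,p}(B(x_0,r))$, while $\|f\|_p^p = c^p r^{-sp}|B(x_0,r)| \simeq r^{n-sp}$.

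Next I would verify the countable subadditivity $R_{s,p}\bigl(\bigcup_i E_i\bigr)\le \sum_i R_{s,p}(E_i)$. Given $\varepsilon>0$, choose nonnegative $f_i$ with $\mathcal{I}_sf_i\ge 1$ on $E_i$ and $\|f_i\|_p^p\le R_{s,p}(E_i)+\varepsilon 2^{-i}$. The $\ell^p$-aggregate $f=\bigl(\sum_i f_i^p\bigr)^{1/p}$ dominates every $f_i$ pointwise, so $\mathcal{I}_s f \ge \mathcal{I}_s f_i \ge 1$ on each $E_i$, hence on $\bigcup_i E_i$; and by monotone convergence $\|f\|_p^p = \sum_i \|f_i\|_p^p \le \sum_i R_{s,p}(E_i)+\varepsilon$. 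Letting $\varepsilon\to 0$ finishes this step.

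Finally, for any (finite or countable) ball covering $E\subset \bigcup_i B(x_i,r_i)$, monotonicity combined with the two preceding steps gives
\[
R_{s,p}(E)\le \sum_i R_{s,p}(B(x_i,r_i))\le C\sum_i r_i^{n-sp},
\]
and taking the infimum over all such coverings yields $R_{s,p}(E)\le C\,\mathcal{H}^{n-sp}_\infty(E)$. I do not anticipate a serious obstacle here; the only moderately delicate choice is to aggregate the $f_i$'s as $(\sum_i f_i^p)^{1/p}$ rather than as the naive sum $\sum_i f_i$ in the subadditivity step, since that is what preserves $p$-th power norm additivity while still dominating each $f_i$ pointwise.
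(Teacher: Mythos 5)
Your argument is correct and follows essentially the same route as the paper: cover $E$ by balls, use monotonicity and countable subadditivity of $R_{s,p}$, bound the capacity of each ball by $Cr_i^{n-sp}$, and take the infimum over coverings. The only difference is that you prove the two ingredients (the ball estimate via the explicit test function $c\,r^{-s}\chi_{B}$, and subadditivity via the $\ell^p$-aggregate) which the paper simply cites from Adams--Hedberg or takes as standard, and both of your verifications are sound.
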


\begin{proof}
 Let $E\sub\bigcup_i B_i$, where $B_i=B(x_i,r_i)$ with $x_i\in\R^n$, $r_i>0$.
 By the monotonicity and the subadditivity of the Riesz capacity, we have
 \[
 R_{s,p}(E)\leq R_{s,p}\Big(\bigcup_i B_i\Big)\leq  \sum_i R_{s,p}(B_i)\,.
 \]
 Since $R_{s,p}(B_i)\leq C r_i^{n-sp}$ for all balls, \cite[Proposition~5.1.2]{AH},
 the claim follows by taking the infimum over all
 such covers of $E$.
\end{proof}

The following deep theorem from~\cite{Lewis1988} states that local
uniform fatness of closed sets is a self-improving property.

\begin{theorem}\label{lewis}
Let $0< sp<n$ and $1<p<\infty$. Assume that $E\sub\R^n$ is closed and
locally $(s,p)$-uniformly fat (with constant $\sigma>0$). Then there are constants
$\varepsilon,\sigma_1>0$, depending only on $s,p,n,\sigma$, such
that $R_{\beta,q} (B(x,r)\cap E)  \ge \sigma_1 r^{n-\beta q}$
whenever $x\in E$, $0<r<r_0$, and $sp-\varepsilon<\beta q<sp$.
\end{theorem}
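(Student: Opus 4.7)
The plan is to follow Lewis's original nonlinear potential theoretic strategy. Fix $x\in E$ and $0<r<r_0$, and set $K:=\overline{B(x,r)}\cap E$. The key object is an equilibrium-type measure $\mu=\mu_{x,r}$, supported on $K$ and nearly realizing the $(s,p)$-capacity of $K$: one can arrange that $\mu(K)\simeq R_{s,p}(K)$ and that $\mathcal{I}_s\mu$ is comparable to $1$ on a quantitatively large portion of $K$. Under the standing uniform fatness assumption we then have $\mu(K)\gtrsim r^{n-sp}$.

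By the Havin--Maz'ya--Hedberg theorem, $(s,p)$-uniform fatness of $E$ is equivalent to a lower bound on the Wolff potential
\begin{equation*}
W^{\mu}_{s,p}(y) \;:=\; \int_{0}^{r_0}\!\left(\frac{\mu\bigl(B(y,t)\cap E\bigr)}{t^{n-sp}}\right)^{\!1/(p-1)}\frac{dt}{t}\;\geq\;c\;>\;0
\end{equation*}
for $y\in E$. First I would establish a \emph{reverse H\"older inequality} for the Riesz potential $\mathcal{I}_s\mu$: there exist $q_0>1$ and $C>0$, depending only on $s,p,n,\sigma$, such that
\begin{equation*}
\left(\vint_{B(y,\rho)}(\mathcal{I}_s\mu)^{q_0(p-1)}\,dz\right)^{\!1/q_0}\;\leq\; C\vint_{B(y,2\rho)}(\mathcal{I}_s\mu)^{p-1}\,dz
\end{equation*}
for balls $B(y,2\rho)\sub B(x,2r)$. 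This is the technical heart of the argument; it exploits the quasi-superharmonicity of Riesz potentials together with a capacitary weak Harnack principle derived from the fatness hypothesis.

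Next, an application of Gehring's lemma to the reverse H\"older inequality yields the crucial self-improvement: the inequality above persists with exponent $q_0+\delta$ for some $\delta>0$ depending only on the data. Translating this gain of integrability back through the duality between Riesz capacity and Riesz potentials, one obtains the desired lower capacity bound $R_{\beta,q}(B(x,r)\cap E)\geq\sigma_1 r^{n-\beta q}$ for all admissible pairs $(\beta,q)$ with $sp-\varepsilon<\beta q<sp$, where $\varepsilon=\varepsilon(s,p,n,\sigma,\delta)>0$.

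The principal obstacle is the proof of the reverse H\"older inequality itself. In the classical setting of $p$-capacity, the analogous estimate follows from the PDE satisfied by the capacitary potential via a standard Caccioppoli argument; in the nonlocal Riesz context there is no equally clean Euler--Lagrange equation, so one must work directly within nonlinear Wolff potential theory and establish quantitative Muckenhoupt--Wheeden type inequalities for $\mathcal{I}_s\mu$ by hand. This is precisely the content of Lewis~\cite{Lewis1988}, and any faithful implementation of the plan above would in essence reproduce that analysis.
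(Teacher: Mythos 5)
The paper does not actually prove this statement: it is quoted as a deep theorem of Lewis and the only ``proof'' offered is the citation to \cite{Lewis1988}, which is then used as a black box (e.g.\ in Proposition~\ref{prop:fatness implies density}). Measured against that, your proposal is not a proof either, but an outline of Lewis's strategy whose decisive steps are left unestablished. The reverse H\"older/weak Harnack estimate for the potential of the near-capacitary measure is only asserted, and you say yourself that carrying it out would reproduce Lewis's analysis; since that estimate is precisely where the self-improvement is generated, the argument as written has a genuine gap. Either one cites Lewis outright, as the paper does, or that estimate has to be proven.

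Two further points would need repair even at the level of the outline. First, for $p\neq 2$ the capacitary (equilibrium) measure $\mu$ of $K$ does not make the \emph{linear} Riesz potential $\mathcal{I}_s\mu$ comparable to $1$ on a large part of $K$; what is $\simeq 1$ quasi-everywhere on $K$ is the nonlinear Havin--Maz'ya potential $\mathcal{I}_s\big((\mathcal{I}_s\mu)^{p'-1}\big)$, or equivalently (up to constants) the Wolff potential $\dot{W}^{\mu}_{s,p}$, so the normalization your reverse H\"older inequality starts from is not available as stated and the whole scheme must be run on the nonlinear potential. Second, the closing step ``translate the gain of integrability back through duality'' only produces what one expects it to: higher integrability of $\mathcal{I}_s\mu$ beyond $L^{p'}$ makes (a normalization of) $\mu$ admissible in the dual definition of $R_{s,q}$ for some $q<p$, i.e.\ it yields $(s,q)$-fatness for the \emph{same} smoothness index $s$. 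The theorem as stated allows $\beta$ to vary, with only the product constrained by $sp-\varepsilon<\beta q<sp$, so an additional comparison of capacities across smoothness indices (for instance through Hausdorff contents as in Section~\ref{s.comparison}, or the comparison theorems in \cite{AH}), uniform in $x\in E$ and $0<r<r_0$, is still required and is not addressed in your sketch.
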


In other words, a closed and
locally $(s,p)$-uniformly fat set $E$ is actually 
$(\beta,q)$-locally uniformly fat also for
$sp-\varepsilon<\beta q \le sp$. Since the constants $\varepsilon$
and $\sigma_1$ in the formulation of Theorem \ref{lewis} are
independent of the parameter $r_0$, we see that uniform
fatness of closed sets is self-improving as well.
With the help of the self-improvement we obtain the following
proposition, which yields in particular the implication (1)$\Rightarrow$(2)
of Theorem~\ref{t.equiv}.

\begin{proposition}\label{prop:fatness implies density}
 Let $1<p<\infty$ and $0 < sp <n$.
 Assume that $E\sub\R^n$ is closed and $(s,p)$-uniformly fat. Then there exist
 $n-sp< \lambda \le n$ and $C>0$ such that
 \[
 \Ha^\lambda_\infty(B(x,r)\cap E)\ge C r^\lambda
 \]
 for every $x\in E$ and for all $r>0$.
\end{proposition}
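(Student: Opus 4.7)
The plan is to combine the Lewis self-improvement theorem (Theorem~\ref{lewis}) with the Riesz--Hausdorff estimate from Lemma~\ref{lemma:Riesz bounds Hausdorff}. The idea is that $(s,p)$-uniform fatness trivially gives a lower bound for Hausdorff content only with the critical exponent $\lambda = n-sp$, but we need a strictly larger exponent so that the resulting density condition is genuinely stronger. The self-improvement theorem is exactly the tool that provides this extra room.

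In detail, I would first apply Theorem~\ref{lewis} to the closed, $(s,p)$-uniformly fat set $E$. Since the constants $\varepsilon$ and $\sigma_1$ produced there depend only on $s,p,n$ and the fatness constant, and since $E$ is $(s,p)$-uniformly fat at \emph{all} scales (i.e.\ one may take $r_0 = \infty$), the conclusion upgrades to: there exist exponents $\beta, q$ with $sp - \varepsilon < \beta q < sp$ such that
\[
R_{\beta,q}\bigl(B(x,r)\cap E\bigr) \ge \sigma_1\, r^{n-\beta q}
\]
for every $x\in E$ and every $r>0$. Set $\lambda := n - \beta q$. Then $n - sp < \lambda < n$, which is within the desired range.

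Next I would apply Lemma~\ref{lemma:Riesz bounds Hausdorff} with the parameters $(\beta,q)$ in place of $(s,p)$; note that the lemma's hypothesis $0 < \beta q < n$ is satisfied. This gives
\[
\sigma_1\, r^{n-\beta q} \le R_{\beta,q}\bigl(B(x,r)\cap E\bigr) \le C\, \Ha^{n-\beta q}_\infty\bigl(B(x,r)\cap E\bigr),
\]
and rearranging yields $\Ha^\lambda_\infty(B(x,r)\cap E) \ge C' r^\lambda$ with $C' = \sigma_1/C$, which is the desired conclusion.

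There is no serious obstacle: the entire argument is a two-line combination once Theorem~\ref{lewis} is invoked. The only point requiring mild care is to verify that Lewis's self-improvement can be applied with the scale-invariant (global) constant $r_0 = \infty$, which is explicitly allowed by the remark immediately after Theorem~\ref{lewis} in the excerpt.
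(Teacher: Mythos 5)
Your proposal is correct and follows essentially the same route as the paper: invoke the self-improvement of uniform fatness (Theorem~\ref{lewis}, noting it is scale-free since $\varepsilon,\sigma_1$ do not depend on $r_0$), then combine the improved fatness with Lemma~\ref{lemma:Riesz bounds Hausdorff} to pass from Riesz capacity to Hausdorff content and take $\lambda$ equal to the improved exponent. The paper simply fixes $\beta=s$ and lowers $p$ to some $q\in(1,p)$, which is a special case of your choice of $(\beta,q)$ and changes nothing in substance.
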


\begin{proof}
 By the self-improvement, there is $1<q<p$ such that $E$ is  $(s,q)$-uniformly fat.
 It follows from the definition of $(s,q)$-uniform fatness
 and Lemma \ref{lemma:Riesz bounds Hausdorff} that
 \[
 r^{n-sq} \le \sigma_1^{-1} R_{s,q}(B(x,r)\cap E) \le C \Ha^{n-sq}_\infty(B(x,r)\cap E)
 \]
 for every $x\in E$ and all $r>0$. Hence the claim follows with $\lambda = n - sq > n - sp$.
\end{proof}

Note that if $E$ is locally $(s,p)$-uniformly fat, then the claim 
of Proposition~\ref{prop:fatness implies density} holds for all $0<r<r_0$. 

\subsection{Thickness $\Rightarrow$ uniform fatness}\label{s.thick_2}
Suppose $E\subset \R^n$, $1<p<\infty$, and $0<sp<n$. As in~\cite{Mattila}, 
$\mathcal{M}(E)$ denotes the set of Radon
measures with compact support satisfying
$\operatorname{spt}(\mu)\subset E$ and $0<\mu(\R^n)<\infty$. 
Below we use the following variant of
Frostman's lemma from \cite[Theorem 8.8]{Mattila}; see also~\cite{Carleson}.

\begin{lemma}\label{l.frostman}
Let $E$ be a Borel set in $\R^n$ and let $\lambda>0$. Then $\mathcal{H}^\lambda(E)>0$
if and only if there exists $\mu\in \mathcal{M}(E)$
such that $\mu(B(x,r)) \le r^\lambda$ for $x\in\R^n$ and
$r>0$. Moreover, we can find $\mu$ so that
$\mu(E)\ge C_n\,\mathcal{H}^{\lambda}_\infty(E)$ where
$C_n>0$ depends only on $n$.
\end{lemma}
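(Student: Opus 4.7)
The plan is to prove the two directions of the equivalence separately and to arrange the hard direction so that the quantitative lower bound $\mu(E)\ge C_n\Ha^\lambda_\infty(E)$ comes out of the construction.

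The easy direction is the ``if'' part: if there exists $\mu\in\mathcal M(E)$ with $\mu(B(x,r))\le r^\lambda$ for all $x,r$, then for any countable covering $E\sub\bigcup_i B(x_i,r_i)$ one has
\[
\mu(E)\le \sum_i \mu(B(x_i,r_i))\le \sum_i r_i^\lambda,
\]
and taking the infimum over coverings yields $\mu(E)\le \Ha^\lambda_\infty(E)$. Since $\mu(E)>0$ we conclude $\Ha^\lambda_\infty(E)>0$, and because $\Ha^\lambda$ and $\Ha^\lambda_\infty$ vanish simultaneously this gives $\Ha^\lambda(E)>0$. This half is also the verification of the quantitative bound in the reverse inequality, so it will serve to motivate why the measure constructed in the other direction cannot have total mass much larger than $\Ha^\lambda_\infty(E)$.

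For the ``only if'' direction I would use the classical dyadic construction of a Frostman measure. First reduce to a bounded set by intersecting $E$ with a large half-open dyadic cube $Q_0$ chosen so that $\Ha^\lambda_\infty(E\cap Q_0)$ is comparable to $\Ha^\lambda_\infty(E)$ (or, more safely, work with $E\cap Q_0$ for an arbitrary $Q_0$ containing some fixed large ball and then pass to a limit as $Q_0$ grows). Fix $k\in\N$ and let $\mathcal D_k(Q_0)$ denote the $k$th dyadic subdivision of $Q_0$. Define a preliminary measure $\nu_k$ by placing mass $\ell(Q)^\lambda$ uniformly on each $Q\in\mathcal D_k(Q_0)$ that meets $E$. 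Then $\nu_k(Q)\le \ell(Q)^\lambda$ at the bottom scale, but generally fails at coarser scales. The standard trick is to \emph{trim from above}: define $\mu_k$ by processing the ancestors $Q$ of the bottom cubes from scale $k-1$ down to scale $0$, and whenever one encounters $\nu_k(Q)>\ell(Q)^\lambda$, one rescales the measure restricted to $Q$ by the factor $\ell(Q)^\lambda/\nu_k(Q)\le 1$. After this finite procedure $\mu_k(Q)\le \ell(Q)^\lambda$ for every dyadic cube $Q\sub Q_0$, and $\spt\mu_k\sub E$ up to the closure.

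The key estimate, whose verification is the main obstacle, is the lower bound $\mu_k(Q_0)\ge c_n\,\Ha^\lambda_\infty(E\cap Q_0)$. One proves it by observing that every point $x\in E\cap Q_0$ is contained in a unique \emph{maximal} dyadic cube $Q(x)$ on which the trimming procedure produced equality $\mu_k(Q(x))=\ell(Q(x))^\lambda$; these maximal cubes are pairwise disjoint and cover $E\cap Q_0$. Enclosing each $Q(x)$ in a ball of radius $\sqrt n\,\ell(Q(x))$ gives an admissible cover of $E\cap Q_0$ with total $\lambda$-cost comparable to $\sum \ell(Q(x))^\lambda = \mu_k(Q_0)$, so $\mu_k(Q_0)\ge c_n\Ha^\lambda_\infty(E\cap Q_0)$. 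Finally, for any ball $B(x,r)$ one picks the scale $j$ with $2^{-j}\le r<2^{-j+1}$ and notes that $B(x,r)$ meets a bounded number (depending only on $n$) of cubes in $\mathcal D_j$, each of $\mu_k$-mass at most $\ell^\lambda\lesssim r^\lambda$, which gives $\mu_k(B(x,r))\le C_n r^\lambda$. A weak-$*$ limit $\mu$ of the uniformly bounded sequence $\mu_k$ then lies in $\mathcal M(E)$, inherits the mass bound on balls (after absorbing the factor $C_n$ by rescaling), and satisfies $\mu(\R^n)\ge c_n\Ha^\lambda_\infty(E)$ upon letting $Q_0$ exhaust $\R^n$.
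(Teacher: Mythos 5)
The paper never proves this lemma: it is quoted directly from Mattila \cite[Theorem 8.8]{Mattila} (see also \cite{Carleson}), and your dyadic trimming construction is essentially the argument given in that reference for \emph{compact} sets. The genuine gap in your write-up is the passage from compact (or closed) sets to general Borel sets $E$, which is exactly what the statement requires and what the paper uses (it applies the lemma to Borel sets $B(x,r)\cap E$). Each trimmed measure $\mu_k$ is supported on a union of closed dyadic cubes that merely \emph{meet} $E$, so a weak-$*$ limit $\mu$ is only supported in $\overline{E}$, and in general it may charge $\overline{E}\setminus E$; your phrase ``$\spt\mu_k\sub E$ up to the closure'' acknowledges this but does not repair it, and the assertion that the limit ``lies in $\mathcal M(E)$'' is false as stated for non-closed Borel $E$, since $\mathcal M(E)$ demands $\spt(\mu)\sub E$. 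The standard repair is a reduction to a compact subset: one needs that a Borel (indeed Suslin) set with $\mathcal H^\lambda(E)>0$ contains a compact $K\sub E$ with $\mathcal H^\lambda(K)>0$, and, for the quantitative claim, that compact subsets capture a fixed proportion of $\Ha^\lambda_\infty(E)$. Both facts rest on nontrivial inner-regularity/capacitability theorems (Besicovitch--Davies, Choquet-type increasing sets arguments), not on the dyadic scheme itself; this is a missing idea rather than a routine detail, and it also replaces your ``let $Q_0$ exhaust $\R^n$'' step, which cannot work as written because members of $\mathcal M(E)$ must have compact support (and if $\Ha^\lambda_\infty(E)=\infty$ the quantitative claim must anyway be interpreted through bounded pieces).

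Two smaller points, both patchable. First, the trimming only guarantees $\mu_k(Q)\le\ell(Q)^\lambda$ for dyadic generations $0,\dots,k$; for radii $r<2^{-k}$ your cube-counting argument does not apply, and one must use that the bottom-scale mass is spread uniformly, so that $\mu_k(B(x,r))\le C_n r^n\,2^{k(n-\lambda)}=C_n r^\lambda (r2^k)^{n-\lambda}\le C_n r^\lambda$, which requires $\lambda\le n$ (harmless, since for $\lambda>n$ the statement is vacuous). Second, the covering constant $n^{\lambda/2}$ from enclosing the maximal equality cubes in balls depends on $\lambda$, but is absorbed into a constant depending only on $n$ again via $\lambda\le n$. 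With the compact case so established, the correct structure of the proof is: easy direction as you wrote it, dyadic construction for compact sets, and then the Borel case by the inner-regularity theorem — the last ingredient being the one your proposal is missing.
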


The Wolff potential of a measure $\mu\in \mathcal{M}(\R^n)$ at $ y\in\R^n$
is 
\[
\dot{W}^{\mu}_{s,p}(y) = \int_0^\infty
\bigg(\frac{\mu(B(y,t))}{t^{n-sp}}\bigg)^{p'-1} \frac{dt}{t}\,,
\,\qquad p'=p/(p-1)\,.
\]
By
Wolff's inequality, there is $c>0$ such that
\begin{equation}\label{e.wolff_application}
\sup \mu(E)\le c R_{s,p}(E)\,,
\end{equation}
where the supremum is taken over $\mu\in\mathcal{M}(E)$ satisfying
$\dot{W}^{\mu}_{s,p}(y)\le 1$ for all $y\in \R^n$.

\begin{remark}
Although inequality \eqref{e.wolff_application} is well known and
widely used, it is worthwhile to sketch a proof. Here it is
convenient to use a dual definition for the Riesz capacity of a
compact set $K\subset E$, statement (A) in \cite[p.~177]{Lewis1988}:
\[
R_{s,p}(K) = \sup\{\upsilon(K)^p \,:\,
\,\mathrm{spt}(\upsilon)\subset K \text{ and }\lVert \mathcal{I}_s
\upsilon\rVert_{p'} \le 1\}\,.
\]
Let $\mu\in\mathcal{M}(E)$ be a measure such that
$\dot{W}^\mu_{s,p}(y)\le 1$ for every $y\in \R^n$. Write
$K=\spt(\mu)$. By Wolff's inequality, \cite[Theorem 4.5.4]{AH},
there is  $A>0$,
depending on $s$, $p$, and $n$, such that
\begin{equation}\label{e.admissible}
 \int_{\R^n} \lvert \mathcal{I}_s \mu\rvert^{p'}\,dx  = \int_{\R^n} ( \mathcal{I}_s \mu)^{p'}\,dx
\le A \int_{\R^n} \dot{W}^{\mu}_{s,p}\,d\mu\le A\mu(K)\,.
\end{equation}
Define $\upsilon := A^{-1/p'} \mu(K)^{-1/p'}\mu\in\mathcal{M}(K)$.
By inequality~\eqref{e.admissible}, the measure $\upsilon$ is
admissible for the dual definition of capacity. Since $K\subset
E$, we have that
\[
R_{s,p}(E)\ge R_{s,p}(K)\ge \upsilon(K)^p = A^{-p/p'}
\mu(K)^{-p/p'}\mu(K)^p = A^{-p/p'} \mu(E)\,.
\]
Taking the supremum over measures $\mu$ yields inequality
\eqref{e.wolff_application} with the implied constant
$A^{p/p'}$. 
\end{remark}

The following proposition shows that thickness implies uniform
fatness; in particular, (2)$\Rightarrow$(1) holds in
Theorem~\ref{t.equiv}. For a closely related comparison theorem, see
\cite[Corollary~5.1.14]{AH}.

\begin{proposition}\label{p.density_implies_fatness}
Let $s,p,\lambda$ be such that $1<p<\infty$, $0<sp<n$, and
$n-sp<\lambda \le n$. Let $E\subset \R^n$ be a Borel set such that
for a constant $\sigma>0$,
\begin{equation}\label{e.estimate}
\mathcal{H}^\lambda_\infty (B(x,r)\cap E) \ge \sigma r^\lambda
\end{equation}
for all $x\in E$ and $r>0$. Then $E$ is $(s,p)$-uniformly fat.
\end{proposition}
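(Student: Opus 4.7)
The plan is to combine Frostman's lemma (Lemma~\ref{l.frostman}) with Wolff's inequality~\eqref{e.wolff_application}. Fix $x\in E$ and $r>0$; the goal is to produce a measure supported on $B(x,r)\cap E$ whose Wolff potential is uniformly bounded, and then conclude via~\eqref{e.wolff_application}. By Lemma~\ref{l.frostman} applied to the Borel set $B(x,r)\cap E$ together with the hypothesis~\eqref{e.estimate}, there exists $\mu\in\mathcal{M}(B(x,r)\cap E)$ such that $\mu(B(y,t))\le t^\lambda$ for all $y\in\R^n$ and $t>0$, and such that
\[
\mu(\R^n)=\mu(B(x,r)\cap E)\ge C_n \mathcal{H}^\lambda_\infty(B(x,r)\cap E)\ge C_n\sigma\,r^\lambda\,.
\]

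Next, I would estimate $\dot W^\mu_{s,p}(y)$ for an arbitrary $y\in\R^n$ by splitting the defining integral at $t=r$. On the inner part $0<t\le r$ the Frostman bound gives $\mu(B(y,t))/t^{n-sp}\le t^{\lambda-n+sp}$, and since $\lambda-n+sp>0$ the integral converges to a constant multiple of $r^{(\lambda-n+sp)(p'-1)}$. On the outer part $t\ge r$ one uses the crude bound $\mu(B(y,t))\le\mu(\R^n)\le r^\lambda$ (the latter from Frostman with $y=x$, $t=r$); since $sp<n$, also this tail integrates to a constant multiple of $r^{(\lambda-n+sp)(p'-1)}$. Combining these yields
\[
\dot W^\mu_{s,p}(y)\le M:=C_1\, r^{(\lambda-n+sp)(p'-1)}\qquad\text{for all } y\in\R^n,
\]
where $C_1$ depends only on $n,s,p,\lambda$.

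Rescaling $\tilde\mu:=M^{-1/(p'-1)}\mu$ gives a measure in $\mathcal{M}(B(x,r)\cap E)$ with $\dot W^{\tilde\mu}_{s,p}(y)\le 1$ everywhere, and with total mass
\[
\tilde\mu(B(x,r)\cap E)=M^{-1/(p'-1)}\mu(B(x,r)\cap E)\ge C_n\sigma\, C_1^{-1/(p'-1)} r^{\lambda-(\lambda-n+sp)}=C_2\, r^{n-sp}.
\]
Inequality~\eqref{e.wolff_application} then yields $R_{s,p}(B(x,r)\cap E)\ge c^{-1}\tilde\mu(B(x,r)\cap E)\ge C_3\,r^{n-sp}$, which is exactly the $(s,p)$-uniform fatness of $E$.

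The only real subtlety, and the one step that uses all three standing assumptions, is the Wolff potential estimate: convergence at $t\to 0$ requires $\lambda>n-sp$, while convergence at $t\to\infty$ requires $sp<n$; both hold by hypothesis. Everything else is bookkeeping of the scaling exponents, which are forced by the homogeneity of $\dot W^{\cdot}_{s,p}$ under multiplication of the measure.
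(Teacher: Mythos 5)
Your proposal is correct and follows essentially the same route as the paper's own proof: Frostman's lemma on $B(x,r)\cap E$, the Wolff potential estimate obtained by splitting at $t=r$ (Frostman bound on $(0,r]$, total-mass bound on $[r,\infty)$), rescaling the measure to make the Wolff potential at most $1$, and concluding via inequality~\eqref{e.wolff_application}. The only cosmetic difference is that you invoke the quantitative mass lower bound from Lemma~\ref{l.frostman} at the outset, whereas the paper carries the inequality $\widetilde\mu(E_{x,r})\ge C_n\mathcal{H}^\lambda_\infty(E_{x,r})$ along and inserts the hypothesis~\eqref{e.estimate} only at the end; the content is identical.
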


\begin{proof}
Let $x\in E$, $r>0$, and let
\[
E_{x,r}= B(x,r)\cap E\,.
\]
By condition \eqref{e.estimate}, we find that
$\mathcal{H}^\lambda(E_{x,r})>0$. Frostman's Lemma 
\ref{l.frostman} yields a
measure $\widetilde{\mu}\in \mathcal{M}(E_{x,r})$ satisfying
$\widetilde{\mu}(B(y,t))\le t^\lambda$ for each $y\in \R^n$ and
$t>0$. Moreover,
\begin{equation}\label{e.frost_ineq}
\widetilde{\mu}(E_{x,r})\ge
C_n\mathcal{H}^\lambda_\infty(E_{x,r})\,,
\end{equation}
where $C_n>0$ depends only on $n$.

Now, for a fixed $y\in\R^n$, we write
\begin{align*}
\dot{W}^{\widetilde{\mu}}_{s,p}(y) = \bigg\{ \int_0^r  +
\int_r^\infty\bigg\}\bigg(\frac{\widetilde{\mu}(B(y,t))}{t^{n-sp}}\bigg)^{p'-1}
\frac{dt}{t} =: A+ B\,.
\end{align*}
We first estimate term $A$ using the properties of the Frostman
measure $\widetilde{\mu}$:
\begin{align*}
A &= \int_0^r \bigg(\frac{\widetilde{\mu}(B(y,t))}{t^{n-sp}}\bigg)^{p'-1} \frac{dt}{t} \\
&\le \int_0^r  t^{(\lambda-n+sp)(p'-1) - 1} \,dt =
\frac{r^{(\lambda-n+sp)(p'-1)}} {(\lambda - n +sp)(p'-1)}\,.
\end{align*}
Next term $B$ is considered. We begin with a preliminary
observation: for $y\in \R^n$ and $t>0$,
\[
\widetilde{\mu}(B(y,t))= \widetilde{\mu}(B(y,t)\cap B(x,r)) \le
\widetilde{\mu}(B(x,r))\le r^\lambda\,.
\]
This follows from the fact that $\widetilde{\mu}$ is supported
inside $E_{x,r}\subset B(x,r)$. Thus we obtain
\begin{align*}
B \le  r^{\lambda(p'-1)} \int_r^\infty t^{(sp-n)(p'-1)-1}\,dt =
\frac{r^{\lambda(p'-1)+(sp-n)(p'-1)}}{(n-sp)(p'-1)} =
\frac{r^{(\lambda-n +sp)(p'-1)}}{(n-sp)(p'-1)}\,.
\end{align*}
The above estimates show that $\dot{W}^{\widetilde{\mu}}_{s,p}(y)
= A + B\le \kappa r^{(\lambda-n+sp)(p'-1)}$, where the positive
constant $\kappa$ depends on $n$, $s$, $p$, and $\lambda$.

Let us define $\mu := \kappa^{-1/(p'-1)} r^{n-sp-\lambda}
\widetilde\mu$. Then $\mu\in \mathcal{M}(E_{x,r})$ and, for every
$y\in \R^n$,
\[
\dot{W}^{\mu}_{s,p}(y) =
\kappa^{-1}r^{(n-sp-\lambda)(p'-1)}\dot{W}^{\widetilde{\mu}}_{s,p}(y)\le
1\,.
\]
Hence, by 
assumption \eqref{e.estimate} and inequalities
\eqref{e.frost_ineq} and \eqref{e.wolff_application},
\[
\sigma r^{\lambda} \le\mathcal{H}^\lambda_{\infty}(E_{x,r}) \le
C_n^{-1} \widetilde\mu(E_{x,r}) \le c C_n^{-1}\kappa^{1/(p'-1)}
r^{\lambda-n+sp}R_{s,p}(E_{x,r})\,.
\]
After a simplification of the exponents 
we find that
\[
C_{\sigma,\lambda,n,s,p} r^{n-sp}
\le R_{s,p}(E_{x,r})=R_{s,p}(B(x,r)\cap E)\,.
\]
This concludes the proof.
\end{proof}

Note again that if the thickness condition~\eqref{e.estimate} holds for all $x\in E$ and every $0<r<r_0$,
then $E$ is locally $(s,p)$-uniformly fat.

\subsection{$\lambda$-regular sets and uniform fatness}
The following result relating $\lambda$-regular sets and uniform fatness
is  useful from the viewpoint of applications.

\begin{proposition}\label{t.lambda_sets_are_fat}
Let $s,p,\lambda$ be such that $1<p<\infty$, $0<sp<n$, and
$n-sp<\lambda\le n$. If $E\subset \R^n$ is an unbounded
$\lambda$-regular set, then $E$ is $(s,p)$-uniformly fat. On the other
hand, if $E\subset \R^n$ is a bounded $\lambda$-regular set, then $E$ is
locally $(s,p)$-uniformly fat.
\end{proposition}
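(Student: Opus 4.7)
The plan is to deduce both statements from Proposition~\ref{p.density_implies_fatness} (and the closing remark of Section~\ref{s.thick_2}). To do this, I only need to verify that a $\lambda$-regular set $E$ satisfies the Hausdorff content density estimate \eqref{e.estimate} with the prescribed exponent $\lambda$; since $n-sp<\lambda\le n$ is within the range allowed by the proposition, the fatness conclusions then follow immediately.

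The key step is to convert the $\lambda$-regularity bound on $\mathcal{H}^\lambda$ into a matching lower bound for $\mathcal{H}^\lambda_\infty$. Given $x\in E$ and an admissible radius $r$, I would take an arbitrary covering $B(x,r)\cap E\subset\bigcup_i B(x_i,r_i)$ with $x_i\in E$ (permissible up to a universal factor, as noted after the definition of $\mathcal{H}^\lambda_\infty$). Then subadditivity of $\mathcal{H}^\lambda$ combined with the upper regularity in \eqref{e.lambda_set} gives
\[
C^{-1}r^\lambda\le\mathcal{H}^\lambda(B(x,r)\cap E)\le\sum_i\mathcal{H}^\lambda(B(x_i,r_i)\cap E)\le C\sum_i r_i^\lambda,
\]
where the lower bound is the lower regularity in \eqref{e.lambda_set}. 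Taking the infimum over all such coverings yields $\mathcal{H}^\lambda_\infty(B(x,r)\cap E)\ge C^{-2}r^\lambda$, which is precisely \eqref{e.estimate}.

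In the unbounded case $\diam(E)=\infty$, the upper regularity is available for every $r_i>0$ and every $r>0$, so \eqref{e.estimate} holds for all $x\in E$ and all $r>0$, and Proposition~\ref{p.density_implies_fatness} gives $(s,p)$-uniform fatness. In the bounded case I would set, say, $r_0=\diam(E)$ and argue for $0<r<r_0$: the upper regularity in \eqref{e.lambda_set} requires $r_i<\diam(E)$, but in any covering I may either truncate those balls to this threshold without decreasing the sum, or simply observe that a single $r_i\ge\diam(E)$ already forces $\sum_j r_j^\lambda\ge r_0^\lambda>r^\lambda$, which trivially yields the needed bound. Applying the remark after Proposition~\ref{p.density_implies_fatness} then gives local $(s,p)$-uniform fatness. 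The only mild subtlety is this bookkeeping with the threshold $\diam(E)$ in the bounded case; everything else is a direct assembly of the self-improving regularity with the Frostman/Wolff-based implication already proved.
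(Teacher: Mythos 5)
Your argument is correct, but it is not the route the paper takes. You reduce the proposition to the \emph{statement} of Proposition~\ref{p.density_implies_fatness} by first proving the standard fact that a $\lambda$-regular set satisfies the content density bound \eqref{e.estimate}: the chain $C^{-1}r^\lambda\le\mathcal{H}^\lambda(B(x,r)\cap E)\le\sum_i\mathcal{H}^\lambda(B(x_i,r_i)\cap E)\le C\sum_i r_i^\lambda$ is valid (subadditivity needs no finiteness, and centering the covering balls in $E$ is allowed up to a constant), so taking the infimum over coverings gives $\mathcal{H}^\lambda_\infty(B(x,r)\cap E)\gtrsim r^\lambda$, and the unbounded/bounded cases then follow from Proposition~\ref{p.density_implies_fatness} and the remark after it. The paper instead bypasses both the covering argument and Frostman's Lemma~\ref{l.frostman}: it takes $E_{x,r}=\bar B(x,r/2)\cap E$ and uses the restriction measure $\widetilde\mu=\mathcal{H}^\lambda\lvert_{E_{x,r}}$, which by \eqref{e.lambda_set} already has the growth $\widetilde\mu(B(y,t))\le Ct^\lambda$ and total mass $\simeq r^\lambda$, and then repeats the Wolff-potential estimate from the \emph{proof} of Proposition~\ref{p.density_implies_fatness} with this measure. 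So both approaches ultimately rest on the same Wolff inequality; yours buys a cleaner logical structure (Proposition~\ref{p.density_implies_fatness} is quoted verbatim, and you record the independently useful equivalence $\mathcal{H}^\lambda_\infty(B(x,r)\cap E)\simeq r^\lambda$ for regular sets), while the paper's use of the natural Frostman measure carried by a regular set is shorter and avoids the content computation altogether. One small caveat in your bounded case: the ``truncate the balls to radius $\diam(E)$'' alternative is not justified as stated, since \eqref{e.lambda_set} gives the upper bound only for radii strictly below $\diam(E)$; however, your second alternative (a single $r_i\ge\diam(E)$ already forces $\sum_j r_j^\lambda\ge\diam(E)^\lambda\ge r^\lambda$) settles the matter, so there is no gap.
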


\begin{proof}
Let us consider the unbounded case; the bounded case is similar.
Let $x\in E$, $r>0$, and let $E_{x,r}=\bar B(x,r/2)\cap E$. Since
$\lambda$-regular sets are closed (by definition), the set $E_{x,r}$ is compact, and we
may consider the measure
\[
\widetilde{\mu} = \mathcal{H}^\lambda\lvert_{E_{x,r}} \in
\mathcal{M}(E_{x,r})\,.
\]
For the last relation, we refer to \cite[p. 57]{Mattila}. By 
the regularity condition~\eqref{e.lambda_set},
for each $y\in \R^n$ and for all $0<t\le r$ we have
\[\widetilde{\mu}(B(y,t))\le C t^\lambda\] 
and, moreover,
$\widetilde{\mu}(\R^n)=\widetilde{\mu}(E_{x,r})\simeq r^\lambda$.
Hence, proceeding as in the proof of Proposition~\ref{p.density_implies_fatness}, we obtain
\[
r^{n-sp}\le c R_{s,p}(E_{x,r}) \le c R_{s,p}(B(x,r)\cap E)\,.
\]
This concludes the proof, as $x\in E$ and $r>0$ are arbitrary.
\end{proof}

\section{Main Theorem}\label{s.proof_main}

The following sufficient condition for fractional Hardy inequalities is 
the main result of this paper.

\begin{theorem}\label{thm:fractional_hardy_visible}
Let $0<s<1$ and $1<p<\infty$ satisfy $0<sp<n$, and let $G\sub\R^n$
be an open set. Assume that
there exists $n - sp<\lambda \le n $ and $C_0>0$, $c\ge 1$ such that
\begin{equation}\label{eq:main cond}
\Ha^\lambda_\infty\big(\bdry^{\vis}_{x,c} G\big)
\geq C_0 \dist(x,\bdry  G)^\lambda
\end{equation}
for all $x\in G$. Then $G$ admits an $(s,p)$-Hardy inequality.
\end{theorem}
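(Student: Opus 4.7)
The plan is to combine a pointwise fractional Hardy estimate at every $x\in G$ with an $L^p$-boundedness argument for a local fractional maximal operator, following the template of~\cite{kole,KLT,LeLip} for the chaining side and of~\cite{E-HSV,ihnatsyeva1} for the maximal-function side. Fix $u\in C^\infty_0(G)$ and $x\in G$, write $d=\dist(x,\bdry G)$, and note that hypothesis~\eqref{eq:main cond} together with Lemma~\ref{l.frostman} yields a Frostman measure $\mu_x$ supported in $\vbdry_{x,c}G$ satisfying $\mu_x(\vbdry_{x,c}G)\gtrsim d^\lambda$ and $\mu_x(B(y,t))\le t^\lambda$ for all $y\in\R^n$ and $t>0$.

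For each $w\in\vbdry_{x,c}G$, the $c$-John curve $\gamma_w\colon[0,\ell_w]\to G\cup\{w\}$ carries a standard chain of balls $B_0^w,B_1^w,\ldots\sub G$ centered on $\gamma_w$, with radii $r_i\simeq 2^{-i}d$ and bounded overlap; the John condition~\eqref{eq: John} is exactly what keeps each $B_i^w$, together with a slight enlargement, inside $G$. The fractional $(s,p)$-Poincar\'e inequality on each $B_i^w$ bounds $|u_{B_i^w}-u_{B_{i+1}^w}|$ by a constant multiple of
\[
r_i^{s}\,\biggl(\frac{1}{|B_i^w|}\int_{B_i^w}\int_{B_i^w}\frac{|u(y)-u(z)|^p}{|y-z|^{n+sp}}\,dy\,dz\biggr)^{1/p}.
\]
Since $u\in C_0^\infty(G)$ extends continuously by zero across $\bdry G$, one has $u_{B_i^w}\to 0$ as $i\to\infty$, so telescoping produces a pointwise bound $|u(x)|\le\sum_{i\ge 0}|u_{B_i^w}-u_{B_{i+1}^w}|$ in terms of the quantities above, localized to $G\times G$.

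Next, raise the estimate to the $p$-th power and average over $w\in\vbdry_{x,c}G$ against $\mu_x$, normalized by $\mu_x(\vbdry_{x,c}G)\gtrsim d^\lambda$. A careful bookkeeping, using the Frostman upper bound $\mu_x(B(\cdot,t))\le t^\lambda$ to control the multiplicity with which a pair $(y,z)\in G\times G$ enters the averaged sum, converts the single-chain estimate into a pointwise inequality of the form
\[
\frac{|u(x)|^p}{d^{sp}}\le C\sum_{i\ge 0}2^{-i(\lambda-n+sp)}\,\mathcal{A}_i(x),
\]
where each $\mathcal{A}_i(x)$ is a local average in $G\times G$ of $|u(y)-u(z)|^p/|y-z|^{n+sp}$ at scale $r_i\simeq 2^{-i}d$. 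The hypothesis $\lambda>n-sp$ is precisely what makes this geometric series summable.

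Finally, integrate over $G$ and apply Fubini, exploiting the bounded overlap of the regions covered by $\mathcal{A}_i(x)$ so that each pair $(y,z)\in G\times G$ is charged only a controlled number of times across $x$ and $i$. This yields
\[
\int_G\frac{|u(x)|^p}{\dist(x,\bdry G)^{sp}}\,dx\le C\int_G\int_G\frac{|u(y)-u(z)|^p}{|y-z|^{n+sp}}\,dy\,dz,
\]
as required. The main obstacle is organizing the geometry consistently: every ball of every chain must remain in $G$ (which the John condition supplies), the Hausdorff-content averaging on $\vbdry_{x,c}G$ must be synchronized with the chain scales $r_i$ to extract the decisive factor $2^{-i(\lambda-n+sp)}$, and the final Fubini bound must produce the local $G\times G$ integral rather than the weaker global one of~\eqref{e.integration}. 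It is precisely the visibility hypothesis~\eqref{eq:main cond}, and not merely fatness of $\bdry G$, that makes this localization possible --- as confirmed \emph{a posteriori} by the counterexamples in Section~\ref{s.counterexamples}.
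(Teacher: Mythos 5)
Your proposal is essentially viable, but it follows a genuinely different route from the paper, and two of its steps need repair before it closes. For comparison: the paper never argues pointwise. It discretizes over Whitney cubes, proves in Lemma~\ref{lemma:Hausdorff estimate} a cube-level estimate bounding $\Ha^{\lambda}_{\infty}(\vbdry_{x_Q,c}G)\,|u_Q|^q$ by a localized energy in which an \emph{auxiliary} exponent $q<p$ and smoothness $\beta<s$ appear (the choice \eqref{mu2_def}), covers the visual boundary by the $5r$-covering lemma rather than a Frostman measure, and then absorbs the unbounded overlap of the dilated cubes $LQ$ through the maximal-function lemma in $\R^{2n}$ (Lemma~\ref{carleson_lemma}) with $\kappa=p/q>1$; this is exactly why $q<p$ is forced there. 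You instead stay at exponent $p$, average the chain estimate against a Frostman measure on $\vbdry_{x,c}G$ (Lemma~\ref{l.frostman} applies, since the visual boundary is closed and has positive content by \eqref{eq:main cond}), and replace the maximal operator by a direct Fubini count. What you gain is that no auxiliary exponent is needed; what the paper gains is that all multiplicity bookkeeping is outsourced to the $L^{p/q}$-boundedness of the maximal operator.

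The two repairs. First, telescoping along the chain controls $|u_{B_0^w}|$, the average over the first ball of radius comparable to $d=\dist(x,\bdry G)$, not $|u(x)|$: continuity of the zero extension gives $u_{B_i^w}\to u(w)=0$ at the far end of the chain but says nothing at the near end. You must either append a sub-chain of balls $B(x,2^{-j}d')\sub G$ shrinking to $x$ (legitimate since $u$ is smooth) and estimate those consecutive differences by the same fractional Poincar\'e inequality, or handle $\int_{B_0}|u-u_{B_0}|^p$ separately, as the paper does at the cube level in \eqref{upper_bound}. Second, your final step is not a bounded-overlap statement: for a fixed pair $(y,z)$ and scale $i$, the set of points $x$ whose chains charge $(y,z)$ is only contained in a ball of radius comparable to $c\,2^{i}\dist(y,\bdry G)$, so its measure grows without bound in $i$. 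The count is saved by the fact that on this set $\dist(x,\bdry G)\simeq 2^{i}\dist(y,\bdry G)$, so the weight $\dist(x,\bdry G)^{-n}$ implicit in your averages yields $\int_{\{x:\,(y,z)\in D_i(x)\}}\dist(x,\bdry G)^{-n}\,dx\le C(c,n)$ uniformly in $i$ and $(y,z)$, and the sum over $i$ converges by the geometric factor. Relatedly, raising the chain sum to the power $p$ requires H\"older with geometric weights, which degrades your factor to $2^{-i(\lambda-n+(s-\varepsilon)p)}$; this is harmless precisely because $\lambda>n-sp$ is strict, but it must be said. With these corrections your argument does produce the localized $G\times G$ inequality claimed in the theorem.
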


The proof of Theorem~\ref{thm:fractional_hardy_visible} is based upon a general scheme, built in
\cite{E-HSV,ihnatsyeva1}, in combination with visual boundary
and pointwise Hardy techniques, developed in~\cite{kole,KLT,LeLip}.
For the application of the latter in the present setting, we need the following fractional 
Poincar\'e-type inequalities: 

\begin{lemma}\label{lemma:poincare_cube}
Let $Q$ be a cube in $\R^n$, $n\ge 2$, and let 
$p,q\in [1,\infty)$, $\beta \in (0,1)$, and
$1/q-1/p<\beta /n$. Then the following fractional
$(p,q,\beta)$-Poincar\'e inequality holds 
for every $u\in L^q(Q)$:
\[
\int_{Q} |u(x)-u_Q|^p\,dx \le c|Q|^{1+p\beta/n-p/q} \bigg(\int_{Q}\int_{Q}
\frac{|u(x)-u(y)|^q}{|x-y|^{n+\beta q}}\,dy\,dx\bigg)^{p/q}
\]
Here the constant $c>0$ is independent of $Q$ and $u$.
\end{lemma}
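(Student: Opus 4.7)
The strategy is to reduce to the unit cube and then split into the two regimes $p\le q$ and $p>q$.

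First I would normalize by translation and the dilation $v(z)=u(x_Q+\ell(Q)z)$, which maps $Q$ onto the unit cube $Q_0=[0,1]^n$. A change of variables in the Gagliardo double integral produces a factor $\ell(Q)^{\beta q-n}$, while the left-hand integral scales by $\ell(Q)^n$; raising to the appropriate powers, the exponent $1+p\beta/n-p/q$ of $|Q|$ in the statement emerges exactly. Hence it suffices to prove the inequality for $|Q_0|=1$.

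Second, in the range $p\le q$, Jensen's inequality applied in the $y$ variable gives
\[
\int_{Q_0}|u(x)-u_{Q_0}|^{q}\,dx \le \int_{Q_0}\vint_{Q_0}|u(x)-u(y)|^{q}\,dy\,dx,
\]
and inserting and cancelling $|x-y|^{n+\beta q}$, together with the bound $|x-y|\le\sqrt{n}$ on $Q_0$, yields the inequality in the case $p=q$. For $p<q$, Hölder's inequality on the unit-measure set $Q_0$ gives $\lVert u-u_{Q_0}\rVert_{L^p(Q_0)}\le\lVert u-u_{Q_0}\rVert_{L^q(Q_0)}$, and combined with the $p=q$ case this yields the claim for the entire range $p\le q$.

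Third, in the range $p>q$ with $1/q-1/p<\beta/n$, the desired inequality is a subcritical fractional Sobolev--Poincar\'e estimate on $Q_0$, which I would prove by a Hedberg-type chaining argument. For a.e.\ $x\in Q_0$, fix a dyadic chain $Q_0\supset Q_1\supset Q_2\supset\cdots$ of cubes with $x\in Q_k$ and $\ell(Q_k)=2^{-k}$. By Lebesgue differentiation,
\[
|u(x)-u_{Q_0}|\le\sum_{k=0}^{\infty}|u_{Q_{k+1}}-u_{Q_k}|,
\]
and bounding each summand by the $p=q$ case already proven on $Q_k$ expresses the right-hand side as a weighted sum of Gagliardo $q$-averages of $u$ on the $Q_k$. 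This sum can be dominated pointwise by a Riesz $\beta$-potential of a function whose $L^q(Q_0)$-norm is controlled by $[u]_{W^{\beta,q}(Q_0)}$, and the Hardy--Littlewood--Sobolev inequality then delivers the required $L^p$ bound precisely under the exponent hypothesis $1/q-1/p<\beta/n$.

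The main obstacle is this last step: converting the Hedberg chain sum into a genuine Riesz potential whose $L^q$-normalization is bounded by the full Gagliardo seminorm of $u$ on $Q_0$. This requires careful bookkeeping of the geometric factors $2^{-k\beta}$ together with a controlled overlap argument for the subcubes $Q_k$ as the base point $x$ varies; an alternative is to invoke a fractional maximal function estimate in the spirit of the pointwise techniques of~\cite{kole,KLT,LeLip}.
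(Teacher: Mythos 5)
Your outline follows the paper's reduction (rescale to the unit cube) but is otherwise more self-contained: the paper simply quotes \cite[Remark~4.14]{H-SV} for the Sobolev--Poincar\'e case $p>q$ and then deduces $p\le q$ from that case via H\"older, whereas you handle $p\le q$ directly by Jensen plus the bound $|x-y|\le\sqrt n$ on the unit cube (this part, and the scaling computation giving the exponent $1+p\beta/n-p/q$, are complete and correct) and you prove the case $p>q$ by hand with a chaining argument. That is a legitimate alternative, and it is essentially the argument behind the cited reference.

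Two points in your third step need tightening, and you rightly flagged the first as the obstacle. If you estimate each oscillation $\vint_{Q_k}|u-u_{Q_k}|\,dz$ by the $(q,q,\beta)$-Poincar\'e inequality on $Q_k$, you produce the localized Gagliardo quantities $a_k=\bigl(\int_{Q_k}\int_{Q_k}|u(z)-u(w)|^q|z-w|^{-n-\beta q}\,dw\,dz\bigr)^{1/q}$, i.e.\ $L^q$-averages over $Q_k(x)$ of $g(z):=\bigl(\int_{Q_0}|u(z)-u(w)|^q|z-w|^{-n-\beta q}\,dw\bigr)^{1/q}$; a weighted sum of these is \emph{not} dominated by a Riesz potential of $g$, because an $L^q$-average dominates the $L^1$-average and not conversely (going instead through the maximal function of $g^q$ only returns an $L^p$-norm of $g$, which is not controlled). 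The clean fix is to apply H\"older in the $w$-variable only: $\vint_{Q_k}|u-u_{Q_k}|\,dz\le \vint_{Q_k}\bigl(\vint_{Q_k}|u(z)-u(w)|^q\,dw\bigr)^{1/q}dz\le C\,2^{-k\beta}\vint_{Q_k(x)}g(z)\,dz$; then interchanging the sum over $k$ with the $z$-integral and summing the geometric series $\sum_{2^{-k}\gtrsim |x-z|}2^{k(n-\beta)}\lesssim |x-z|^{\beta-n}$ yields the genuine pointwise bound $|u(x)-u_{Q_0}|\le C\int_{Q_0}g(z)|x-z|^{\beta-n}\,dz$ with $\lVert g\rVert_{L^q(Q_0)}^q$ equal to the Gagliardo seminorm of $u$ to the power $q$. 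Second, Hardy--Littlewood--Sobolev requires $q>1$, while the lemma (and its use in the paper, where $q$ may equal $1$) allows $q=1$; since the exponent is subcritical, $1/q-1/p<\beta/n$, and the kernel is truncated to $|x-z|\le\sqrt n$, replace HLS by Young's inequality with the kernel $|x|^{\beta-n}\chi_{B(0,\sqrt n)}\in L^r$, $1/r=1+1/p-1/q>(n-\beta)/n$, which covers all $q\ge 1$. With these two adjustments your proof closes.
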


Regarding the proof of Lemma~\ref{lemma:poincare_cube}, in the Sobolev--Poincar\'e case 
$p>q$ we apply scaling and translation invariance, and
thereby reduce to the unit cube $[-1/2,1/2]^n$.  For the
unit cube, the claim follows from~\cite[Remark 4.14]{H-SV}.
If $p\le q$, we apply the previous case and H\"older's inequality.

The following key estimate yields a connection between the size of the
visual boundary and the double integrals appearing in
the right-hand sides of the fractional Hardy inequalities.

\begin{lemma}\label{lemma:Hausdorff estimate}
Let $G\sub\R^n$ be an open set.
Assume that $0<\beta<1$, $1\le q<\infty$, and $n-\beta q < \lambda \le n$.
Fix $c\geq 1$ and let $Q\in\W(G)$.
Then there exist constants $L=L(c,n)\geq 1$
and $C=C(n,c,\lambda,q,\beta)>0$ (both independent of $Q$), such that
\begin{equation}\label{eq:bound on Ha}
 \Ha_{\infty}^\lambda (\bdry^{\vis}_{x_Q,c}G)|u_Q|^{q} \leq
     C  \ell(Q)^{\lambda-n+\beta q} \int_{LQ\cap G}\int_{LQ\cap G}
        \frac{|u(x)-u(y)|^q}{|x-y|^{n+\beta q}}\,dy\,dx
\end{equation}
for every $u\in C_0^\infty(G)$.
\end{lemma}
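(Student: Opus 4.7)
The plan is to run the pointwise Hardy scheme of \cite{kole,KLT,LeLip} and average the resulting pointwise bound against a Frostman-type measure on the visual boundary. We may assume that $\Ha^\lambda_\infty(\vbdry_{x_Q,c}G) > 0$, for otherwise \eqref{eq:bound on Ha} is trivial. Applying Frostman's Lemma~\ref{l.frostman}, fix a measure $\mu \in \mathcal{M}(\vbdry_{x_Q,c}G)$ with $\mu(\R^n) \ge C_n \Ha^\lambda_\infty(\vbdry_{x_Q,c}G)$ and $\mu(B(y,r)) \le r^\lambda$ for every ball. The goal is to bound $\mu(\R^n)\,|u_Q|^q$ by the right-hand side of \eqref{eq:bound on Ha}.

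For each $w \in \vbdry_{x_Q,c}G$, fix a John curve in $G_{x_Q,c} \cup \{w\}$ joining $x_Q$ to $w$, and along this curve select a chain of Whitney cubes $Q = Q_0(w), Q_1(w), Q_2(w), \ldots$ with $\ell(Q_i(w)) \simeq 2^{-i}\ell(Q)$, $\dist(w, Q_i(w)) \le C\ell(Q_i(w))$, and $Q_i(w) \cup Q_{i+1}(w) \subset \tilde Q_i(w)$ for some cube $\tilde Q_i(w) \subset G$ of side $\simeq \ell(Q_i(w))$. The John inclusion \eqref{e.john_inclusion} applied to $G_{x_Q,c}$, combined with the Whitney estimate \eqref{dist_est}, confines every $\tilde Q_i(w)$ to a common dilate $LQ \cap G$ with $L = L(c,n)$. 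Since $u \in C^\infty_0(G)$ vanishes in a neighbourhood of $\partial G$, one has $u_{Q_i(w)} = 0$ for all sufficiently large $i$; hence telescoping and comparing consecutive averages via $u_{\tilde Q_i(w)}$, with Lemma~\ref{lemma:poincare_cube} applied with $p = q$ on $\tilde Q_i(w)$, gives
\[
|u_Q| \le \sum_{i \ge 0} |u_{Q_i(w)} - u_{Q_{i+1}(w)}| \le C \sum_{i \ge 0} \ell(Q_i(w))^{\beta - n/q}\, I_i(w)^{1/q},
\]
where $I_i(w)$ denotes the fractional $q$-energy of $u$ on $\tilde Q_i(w) \times \tilde Q_i(w)$.

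Raise the preceding bound to the $q$-th power. For $q > 1$, a weighted Hölder estimate on the series (with weights $\ell(Q_i(w))^{-\epsilon q}$, for some $\epsilon \in (0, \beta + (\lambda - n)/q)$, an interval that is non-empty because $\lambda > n - \beta q$) yields
\[
|u_Q|^q \le C \ell(Q)^{\epsilon q} \sum_{i \ge 0} \ell(Q_i(w))^{q\beta - \epsilon q - n}\, I_i(w);
\]
for $q = 1$ the same inequality holds directly with $\epsilon = 0$. Integrating against $\mu$ and swapping the order by Fubini, the key measure bound
\[
\mu\bigl(\{w : x, y \in \tilde Q_i(w)\}\bigr) \le \mu\bigl(B(x, C\ell(Q_i))\bigr) \le C\ell(Q_i)^\lambda
\]
follows from the Frostman property of $\mu$ together with $\dist(w, \tilde Q_i(w)) = O(\ell(Q_i(w)))$. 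Since every $\tilde Q_i(w) \subset LQ \cap G$, the resulting spatial integral is confined to $(LQ \cap G)^2$; summing the geometric series in $\ell(Q_i) \simeq 2^{-i}\ell(Q)$, whose exponent $\lambda + q\beta - \epsilon q - n$ is positive by the choice of $\epsilon$, leaves exactly the factor $\ell(Q)^{\lambda + q\beta - n}$ in front of the double integral. Substituting $\mu(\R^n) \ge C_n \Ha^\lambda_\infty(\vbdry_{x_Q,c}G)$ yields \eqref{eq:bound on Ha}.

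The principal technical obstacle is the chain construction in the second paragraph: one must produce the cubes $Q_i(w)$ together with the enlargements $\tilde Q_i(w)$ so that, \emph{simultaneously}, their sidelengths decay geometrically, consecutive pairs share a common dilate $\tilde Q_i(w) \subset G$ of comparable size, all the $\tilde Q_i(w)$ lie in a fixed dilate $LQ$, and $\dist(w, \tilde Q_i(w)) = O(\ell(Q_i(w)))$ (this last property is precisely what makes the Frostman bound on $\mu$ directly usable). Each of these is a routine consequence of the John property for $G_{x_Q,c}$ and \eqref{dist_est}, but their simultaneous realisation along a single chain is where essentially all the geometric content of the argument resides.
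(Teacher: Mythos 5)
Your argument is correct in substance, but it takes a genuinely different route from the paper. The paper never introduces a measure on the visual boundary: after the same chaining estimate along John curves, it compares the chain sum with the convergent series $\sum_i \ell(Q_i)^\delta \le C\ell(Q)^\delta$ to extract, for each $w$, a single ``good'' cube $Q_w$ with $\ell(Q_w)^{\lambda}\le C\ell(Q)^{\lambda-n+\beta q}\int_{3Q_w}\int_{3Q_w}|u(x)-u(y)|^q|x-y|^{-n-\beta q}\,dy\,dx$, and then bounds $\Ha^\lambda_\infty(\vbdry_{x_Q,c}G)$ directly by a $5r$-covering argument with pairwise disjoint balls containing the cubes $3Q_{w_j}$, so that the good-cube integrals add up disjointly into the double integral over $(LQ\cap G)^2$. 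You instead dualize: Frostman's Lemma~\ref{l.frostman} replaces the covering step, the weighted H\"older estimate with the auxiliary $\epsilon$ replaces the good-cube selection, and Fubini together with the growth bound $\mu(B(x,r))\le r^\lambda$ replaces the disjointness of the balls. Both schemes are standard in the pointwise-Hardy literature; the paper's is more elementary (no Frostman measure, no H\"older bookkeeping), while yours avoids the compactness/covering argument, and your exponent count $\epsilon q+(\lambda+\beta q-n-\epsilon q)=\lambda-n+\beta q$ indeed reproduces the claimed bound.

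One point needs repair: the chain you posit, with $\ell(Q_i(w))\simeq 2^{-i}\ell(Q)$ \emph{and} consecutive cubes inside a common cube $\tilde Q_i(w)\subset G$ of comparable side, need not exist. For the natural Whitney chain along a John curve the side lengths decay only with a ratio depending on $c$ and $n$, and at an individual step the size may stay constant or even increase (adjacent Whitney cubes differ by a factor at most $4$), so two-sided comparability with $2^{-i}\ell(Q)$ fails; if instead you sample the curve at dyadic parameter values, the common enlargement of two sampled cubes may protrude from $G$. Fortunately your proof only uses weaker properties, all of which the standard Whitney chain has: (i) $\ell(Q_i(w))\le C\theta^{\,i}\ell(Q)$ with $\theta=\theta(c,n)<1$, which is all the H\"older step needs; (ii) consecutive chain cubes intersect, so $3Q_i\subset G$ contains both and $3Q_i\subset LQ$ with $L=L(c,n)$; and (iii) $\dist(w,Q_i(w))\le C\ell(Q_i(w))$, together with the fact that each dyadic side length occurs at most $C(c,n)$ times along one chain. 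Accordingly, in the Fubini step you should group the chain cubes by dyadic side length rather than by chain index (as written, your measure bound has $\ell(Q_i)$ depending on the variable $w$ inside the set being measured); after this regrouping the estimate $\mu(B(x,C2^{-j}))\le C2^{-j\lambda}$ and the geometric series, whose exponent $\lambda+\beta q-n-\epsilon q$ is positive by your choice of $\epsilon$, read exactly as you wrote them, and the proof is complete.
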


\begin{proof}
Throughout the proof, $C$ denotes a positive constant, whose value may change from one occurrence
to another, and which depends at most
on $n$, $c$, $\lambda$, $q$, and $\beta$. 

Let $u\in C_0^\infty(G)$. If $|u_Q|=0$, the claim is trivial, and thus we may assume $|u_Q|\neq 0$.
Moreover, by homogeneity, we may assume $|u_Q|=1$ (otherwise just consider the function $v=u/|u_Q|$).

Let $w\in \vbdry_{x_Q,c}G$, and let $\gamma$ be a $c$-John curve connecting $w$ to the center point
$x_Q$ in $ G_{x_Q,c}\subset G$.
We construct a chain of $\mathcal{W}(G)$-cubes $C(Q,w)=(Q_0, Q_1, \ldots)$
joining  $Q_0=Q$ to $\omega$ as follows. First, let us set $t_0=\ell$, where $\gamma(\ell)=x_Q$.
Then, in the inductive stage, if cubes $Q_0,\ldots, Q_i$ and numbers $t_0,\ldots,t_i$ are chosen,
we let $0<t_{i+1} < t_i$ be the smallest number for which
 $\gamma(t_{i+1})$ belongs to a cube
 $Q'\in\mathcal{W}(G)$ which intersects $Q_i$. We set $Q_{i+1}=Q'$.
Then, by the construction, $\gamma(t_i)\in Q_i$ and 
$\gamma(t)$, for $0<t<t_{i+1}$, does not belong to any cube
$Q''\in\mathcal{W}(G)$ intersecting $Q_i$.
In particular,
\[
\lvert \gamma(t_i)-\gamma(t_{i+1})\rvert \ge \ell(Q_i)/5,
\]
since, by~\eqref{dist_est}, $\ell(Q_i)/5\le \ell(Q'')$ 
for all such cubes $Q''$.
By the arc-length parametrization of $\gamma$ we thus obtain
 \begin{equation}\label{e.clarify_1}
 t_i - t_{i+1} \ge \lvert \gamma(t_i)-\gamma(t_{i+1})\rvert
 \ge \ell(Q_i)/5 \qquad \text{for all } i\ge 0\,.
\end{equation}



Next we show that  
\begin{equation}\label{eq:basic}
\lim_{i\to \infty} u_{Q_i} = u(w)=0.
\end{equation}
To this end, for $i\ge 0$ we first estimate
\begin{equation}\label{eq:useful}
\begin{split}
\diam(Q_i)&\le \dist(Q_i,\partial G)\le \lvert \gamma(t_i)-w\rvert = \lvert \gamma(t_i)-\gamma(0)\rvert\\
&\le t_i \le c\dist(\gamma(t_i),\partial G) \le 5c\diam(Q_i)\,.
\end{split}
\end{equation}
By inequalities \eqref{e.clarify_1} and \eqref{eq:useful}, we find that
\[t_{i}-t_{i+1}\ge \ell(Q_i)/5\ge t_i/(25 c \sqrt n)\,,\] where $C(n,c)=(25 c \sqrt n)^{-1}\in (0,1)$, 
and thus
\begin{equation}\label{eq:useful2}
t_i\le (1-C(n,c))^i t_0\xrightarrow{i\to\infty} 0\,.
\end{equation}
To conclude the proof of~\eqref{eq:basic},
it suffices to observe that
$Q_i\subset G\cap B(w, 3t_i)$;  indeed, by 
the arc-length parametrization of $\gamma$ and estimate~\eqref{eq:useful}
we obtain, for every 
$y\in Q_i$, that
\[
\lvert y-w\rvert \le \lvert y-\gamma(t_i)\rvert 
+ \lvert \gamma(t_i)-\gamma(0)\rvert \le \diam(Q_i)+t_i \le 2t_i < 3t_i\,.
\]

Observe also that~\eqref{eq:useful2} implies 
\begin{equation}\label{eq:delta sum}
  \sum_{i=0}^\infty \ell(Q_i)^\delta \leq \sum_{i=0}^\infty t_i^\delta \le t_0^\delta \sum_{i=0}^\infty
  (1-C(n,c))^{i\delta }
  \le  C(n,c,\delta)\ell(Q_0)^\delta 
\end{equation}
for any $\delta>0$.

Let $Q_i,Q_{i+1}$ be two consecutive cubes in the chain $C(Q,w)$.
By construction, these are two intersecting Whitney cubes. Let us
first assume that $\ell(Q_i)\ge \ell(Q_{i+1})$. Then both cubes
are contained in $3Q_i$
and their measures  are comparable with a constant depending on $n$ only.
Note also that by~\eqref{dist_est} we have $\mathrm{int}(3Q_i)\sub G$.
By the fractional $(1,q,\beta)$-Poincar\'e inequality for cubes, Lemma \ref{lemma:poincare_cube}, \begin{equation}\label{eq:two cubes}\begin{split}
  |u_{Q_i}-u_{Q_{i+1}}| & \leq |u_{Q_i}-u_{3Q_i}| + |u_{3Q_i}-u_{Q_{i+1}}|\\
    & \le C |3Q_i|^{-1}\int_{3Q_i}|u(x)-u_{3Q_i}|\,dx\\
    & \le C |3Q_i|^{\beta/n - 1/q} \biggl(\int_{3Q_i}\int_{3Q_i}
          \frac{|u(x)-u(y)|^q}{|x-y|^{n+\beta q}}\,dy\,dx\biggr)^{1/q}.
 \end{split}
 \end{equation}
 In case of $\ell(Q_i)< \ell(Q_{i+1})$, we have the same estimate but with $3Q_{i+1}$ on the last line.
 Since $u(w)=0$ and $|u_Q|=1$, it follows from~\eqref{eq:basic} that
 \begin{equation}\label{eq:chain}
 \begin{split}
  1 & = |u(w)-u_{Q}| \le \sum_{i=0}^\infty |u_{Q_i}-u_{Q_{i+1}}|\\
    & \le C \sum_{i=0}^\infty \ell(Q_i)^{\beta - n/q}\biggl(\int_{3Q_i}\int_{3Q_i}
          \frac{|u(x)-u(y)|^q}{|x-y|^{n+\beta q}}\,dy\,dx\biggr)^{1/q}.
 \end{split}
 \end{equation}

Comparison of the sums in \eqref{eq:delta sum} and \eqref{eq:chain} leads to the observation
 that, for each $\delta>0$, there exists  an index $i_w\ge 0$ such that
 \[
 \biggl(\frac{\ell(Q_{i_w})}{\ell(Q)}\biggr)^{\delta}\le C \ell(Q_{i_w})^{\beta-n/q}\biggl(\int_{3Q_{i_w}}\int_{3Q_{i_w}}
          \frac{|u(x)-u(y)|^q}{|x-y|^{n+\beta q}}\,dy\,dx\biggr)^{1/q}\,.
 \]
 We now choose $\delta=(\lambda-n+\beta q)/q>0$,
 and so, writing $Q_{i_w}=Q_w$, we have found 
 a cube $Q_w\in C(Q,w)$ satisfying
 \begin{equation}\label{eq:good cube}
 \ell(Q_w)^{\lambda}\leq C \ell(Q)^{\lambda-n+\beta q}
    \int_{3Q_w}\int_{3Q_w}\frac{|u(x)-u(y)|^q}{|x-y|^{n+\beta q}}\,dy\,dx\,.
 \end{equation}

Next we claim that $3Q_w\subset LQ$,
where $L=L(c,n)=30 c \sqrt n\ge 1$.
Indeed, if $y\in 3Q_w$, then inequalities \eqref{eq:useful} and the arc-length parametrization
of $\gamma$ imply that
\begin{align*}
\lvert y-x_Q\rvert &\le \lvert y - \gamma(t_{i_w}) \rvert + \lvert \gamma(t_{i_w})-\gamma(t_0)\rvert
\\&\le \diam(3Q_{w}) + t_0-t_{i_w} \le 3t_0 \le 15 c \diam(Q)=30 c \sqrt n \ell(Q)/2
\end{align*}
and hence $3Q_w\subset LQ$.
Analogous estimates also show that
\begin{equation}\label{e.o2}
w\in B_w:=B(x_{Q_w},\rho\, \ell(Q_w))\quad \text{ and } \quad 3Q_{\omega}\subset B_\omega\,,
\end{equation}
where $\rho=\rho(c,n)=11 c \sqrt n>0$.

 We can now finish the proof.
 Since a cube as in \eqref{eq:good cube} exists for each $w\in \vbdry_{x_Q,c} G$, we may use
 the $5r$-covering lemma and compactness to cover the set $\vbdry_{x_Q,c} G$ with balls $5B_{w_j}$,
 centered in $w_i\in \vbdry_{x_Q,c} G$, $j=1,\dots,N$,
 in such a way that the balls $B_j=B_{w_j}$ are pairwise disjoint and
 $\vbdry_{x_Q,c}G \sub \bigcup_{j=1}^N 5 B_j$.

 Let us write $Q^j=Q_{w_j}$.
 Now estimate \eqref{eq:good cube} and the pairwise disjointness (since $3Q^j\sub B_j$) of the
 cubes $3Q^j$, satisfying $\mathrm{int}(3Q^j)\sub LQ\cap G$, lead to
  \begin{equation}\label{eq:Hastimate}
   \begin{split}
  \Ha^\lambda_{\infty}(\vbdry_{x_Q,c}G)
  & \leq \sum_{j=1}^N (5\rho \ell(Q^j))^{\lambda} \leq C\sum_{j=1}^N \ell(Q^{j})^{\lambda} \\
  & \leq C\sum_{j=1}^N \ell(Q)^{\lambda-n+\beta q}\int_{3Q^j}\int_{3Q^j}\frac{|u(x)-u(y)|^q}{|x-y|^{n+\beta q}}\,dy\,dx\\
  & \leq C\sum_{j=1}^N \ell(Q)^{\lambda-n+\beta q}\int_{3Q^j}\int_{LQ\cap G}
              \frac{|u(x)-u(y)|^q}{|x-y|^{n+\beta q}}\,dy\,dx\\
  & \leq C \ell(Q)^{\lambda-n+\beta q}\int_{LQ\cap G}\int_{LQ\cap G}
              \frac{|u(x)-u(y)|^q}{|x-y|^{n+\beta q}}\,dy\,dx.
   \end{split}
  \end{equation}
 As we assumed $|u_Q|=1$, the claim follows
 from \eqref{eq:Hastimate}.
\end{proof}

The next lemma, essentially~\cite[Lemma~3.4]{ihnatsyeva1}, is also needed in the
proof of Theorem~\ref{thm:fractional_hardy_visible}. For the sake
of convenience, we include also a proof. Notice that inequality~\eqref{beta_ineq} is not a 
direct consequence of H\"older's inequality, since the cubes
$L Q$ need not have bounded overlap if $L$ is large; this is indeed the case in our applications.

\begin{lemma}\label{carleson_lemma}
Assume that $G\subset\R^n$ is an open set, $n\ge 2$.
If $1<\kappa <\infty$ and $L \ge 1$, then, for every $g\in L^\kappa(\R^n\times \R^n)$,
\begin{equation}\label{beta_ineq}
 \sum_{Q\in\W(G)} |Q|^{2}\bigg(\vint_{L Q}\vint_{L Q} |g(x,y)|\,dy\,dx\bigg)^\kappa
 \le C \int_{\R^n}\int_{\R^n}  |g(x,y)|^\kappa\,dy\,dx\,,
\end{equation}
where the constant $C>0$ only depends on $n$ and $\kappa$.
\end{lemma}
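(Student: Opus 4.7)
The plan is to reduce the estimate to the $L^\kappa$-boundedness of the Hardy--Littlewood maximal operator on $\R^{2n}$, thereby bypassing the overlap obstacle pointed out before the statement. The key observation is that although the dilated cubes $LQ$ may overlap an arbitrarily large number of times, the Whitney cubes $Q\in\W(G)$ themselves have pairwise disjoint interiors, so the products $Q\times Q$ are pairwise disjoint in $\R^{2n}$. The maximal function will let us trade the average over the large product $LQ\times LQ$ for a pointwise bound on the much smaller $Q\times Q$, with a constant that does not depend on $L$.

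First I would view $g$ as a function on $\R^{2n}$ and let $Mg$ denote its uncentered Hardy--Littlewood maximal function (over cubes in $\R^{2n}$). For each $Q\in\W(G)$, the set $LQ\times LQ$ is a cube in $\R^{2n}$ of side length $L\ell(Q)$ that contains $Q\times Q$. Since for any $(u,v)\in Q\times Q$ the cube $LQ\times LQ$ is an admissible competitor in the definition of $Mg(u,v)$, we get the pointwise bound
\[
\vint_{LQ}\vint_{LQ}|g(x,y)|\,dy\,dx \le Mg(u,v)
\]
for every $(u,v)\in Q\times Q$, with constant $1$ independent of $L$. This is the step that dissolves the would-be $L$-dependence.

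Next I would raise this inequality to the power $\kappa$, integrate over $(u,v)\in Q\times Q$, and sum over $Q\in\W(G)$. Because the products $Q\times Q$ are pairwise disjoint (in measure) in $\R^{2n}$, the sum collapses to a single integral:
\[
\sum_{Q\in\W(G)}|Q|^2\bigg(\vint_{LQ}\vint_{LQ}|g(x,y)|\,dy\,dx\bigg)^\kappa \le \int_{\R^{2n}}(Mg)^\kappa.
\]
The proof is finished by invoking the $L^\kappa$-boundedness of $M$ on $\R^{2n}$, which holds for $\kappa>1$ and bounds the right-hand side by $C\int_{\R^n}\int_{\R^n}|g(x,y)|^\kappa\,dy\,dx$ with $C=C(n,\kappa)$.

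There is no substantial obstacle; the only subtle point is exactly the one flagged before the statement, namely that H\"older's inequality applied directly would produce a constant depending on the overlap of the cubes $LQ$. The remedy is precisely the switch to the maximal function in dimension $2n$, combined with the disjointness of the undilated products $Q\times Q$, which together transfer the average from $LQ\times LQ$ to $Q\times Q$ for free.
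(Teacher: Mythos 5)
Your argument is correct and is essentially identical to the paper's proof: the paper also passes to the uncentered maximal operator $\mathcal{M}$ on $\R^{2n}$, bounds the average over $LQ\times LQ$ pointwise by $\mathcal{M}g(z,w)$ for $(z,w)\in Q\times Q$, uses the (measure-)disjointness of the Whitney products $Q\times Q$ to collapse the sum into $\int_G\int_G(\mathcal{M}g)^\kappa$, and concludes by the $L^\kappa$-boundedness of $\mathcal{M}$.
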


\begin{proof}
Throughout this proof, we write $m=2n$.
Recall that the (non-centered) maximal function (with respect to cubes) of a locally integrable function
$f\colon \R^{m}\to [-\infty,\infty]$ is
\[
\mathcal{M} f(x) = \sup_{x\in P} \vint_{P} |f(y)|\,dy\,,
\]
where the supremum is taken over all cubes $P$ in
$\mathbb{R}^m$ containing $x\in \mathbb{R}^m$.

Rewriting the left-hand side of inequality \eqref{beta_ineq} and using the
definition of the maximal function in the point $(z,w)\in LQ\times LQ=P\sub\R^m$,
we obtain
\begin{align*}
\sum_{Q\in\W(G)} & |Q|^{2}\bigg(\vint_{L Q}\vint_{L Q} |g(x,y)|\,dy\,dx\bigg)^\kappa\\
& = \sum_{Q\in\W(G)} \int_{\R^n}\int_{\R^n} \chi_{Q}(z)\chi_{Q}(w)
     \bigg(\vint_{L Q} \vint_{L Q}|g(x,y)|\,dy\,dx\bigg)^\kappa \,dw\,dz\\
& \le \sum_{Q\in\W(G)}\int_{\R^n}\int_{\R^n} \chi_{Q}(z)\chi_Q(w) \big(\mathcal{M} g(z,w)\big)^\kappa\,dw\,dz\\
& \le \int_{G}\int_{G} \big(\mathcal{M} g(z,w)\big)^\kappa\,dw\,dz\,.
\end{align*}
The boundedness of the maximal operator $\mathcal{M}$
on $L^\kappa(\R^m)$ yields the claim.
\end{proof}

We are now ready to prove our main result:

\begin{proof}[Proof of Theorem \ref{thm:fractional_hardy_visible}]
Fix a number $q\in [1,p)$ such that $\beta=n(1/p-1/q)+s \in (0,s )$
and
\begin{equation}\label{mu2_def}
 n-\lambda<\beta q <sp,\qquad 0<1/q-1/p<\beta/n\,.
\end{equation}
Let $Q\in\W(G)$, and let
$L\geq 1$ be as in Lemma \ref{lemma:Hausdorff estimate}.
We first claim that
\begin{equation}\label{upper_bound}
\begin{split}
\int_{Q} \lvert u(x) & \rvert^p\,dx  \leq C \bigg[\int_{Q} \lvert u(x)-u_{{Q}}\rvert^p\,dx +
\lvert Q\rvert \lvert u_{{Q}}\rvert^p\bigg]\\
&\le C |Q|^{1+p\beta /n-p/q} \bigg(\int_{LQ\cap G}\int_{LQ\cap G}
\frac{|u(x)-u(y)|^q}{|x-y|^{n+\beta q}}\,dy\,dx\bigg)^{p/q}.
\end{split}
\end{equation}
For the term $\int_{Q} \lvert u(x)-u_{{Q}}\rvert^p\,dx$, the second inequality in
\eqref{upper_bound} is a consequence of the $(p,q,\beta)$-Poincar\'e inequality of
Lemma~\ref{lemma:poincare_cube}.
On the other hand, for
$\lvert Q\rvert \lvert u_{{Q}}\rvert^p$
the inequality follows from Lemma~\ref{lemma:Hausdorff estimate}.
Indeed, by the assumption~\eqref{eq:main cond} on the visual boundary
and inequality \eqref{eq:bound on Ha}, we have
\begin{equation*}
\begin{split}
  \ell(Q)^\lambda|u_Q|^{q} & \leq C \Ha_{\infty}^\lambda (\bdry^{\vis}_{x_Q,c}G)|u_Q|^{q} \\
     & \leq C  \ell(Q)^{\lambda - n + \beta q} \int_{LQ\cap G}\int_{LQ\cap G}
         \frac{|u(x)-u(y)|^q}{|x-y|^{n+\beta q}}\,dy\,dx\,,
\end{split}
\end{equation*}
and so
\begin{equation*}
\begin{split}
  |Q||u_Q|^{p} \leq C  |Q|^{1+p\beta/n-p/q} \bigg(\int_{LQ\cap G}\int_{LQ\cap G}
         \frac{|u(x)-u(y)|^q}{|x-y|^{n+\beta q}}\,dy\,dx\bigg)^{p/q},
\end{split}
\end{equation*}
proving~\eqref{upper_bound}.

Now property \eqref{dist_est} of Whitney cubes and inequality~\eqref{upper_bound} imply
\begin{equation}\label{eq:split to cubes}
\begin{split}
& \int_{G} 
\frac{\vert u(x)\rvert^p} {\dist (x,\partial G)^{sp}} \,dx
\le \sum _{Q\in \W(G)} \diam (Q)^{-sp}\int_{Q}\vert u(x)\vert^p\,dx
\\&\le C \sum _{Q\in \W(G)}
|Q|^{-sp/n}|Q|^{1+ p\beta/n-p/q} \bigg(|Q|^2\vint_{LQ}\vint_{LQ}
g(x,y)\,dy\,dx\bigg)^{p/q}
\\&\le C \sum _{Q\in \W(G)}
|Q|^{1+ p\beta/n+p/q-sp/n} \bigg(\vint_{LQ}\vint_{LQ}
g(x,y)\,dy\,dx\bigg)^{p/q}\,,
\end{split}
\end{equation}
where we have written
\[
 g(x,y) = \chi_G(x)\chi_G(y)\, \frac{|u(x)-u(y)|^q}{|x-y|^{n+\beta q}  }\,.
\]
By the choice of $\beta$ we have in the last line of~\eqref{eq:split to cubes} that
\begin{equation*}
1+ p\beta /n+p/q-sp/n= 2\,,
\end{equation*}
and thus Lemma~\ref{carleson_lemma}, applied to the above function $g$ with $\kappa=p/q>1$, yields
together with estimate~\eqref{eq:split to cubes} that
\begin{equation*}
\begin{split}
\int_{G} \frac{\vert u(x)\rvert^p} {\dist (x,\partial G)^{sp}} \,dx
& \le C \int_{\R^n}\int_{\R^n} g(x,y)^{p/q} \,dy\,dx\\
& \le C \int_{G}\int_{G}
\frac{\vert u(x)-u(y)\vert^p}{\vert x-y\vert ^{n+ sp}}
\,dx\,dy\,.
\end{split}
\end{equation*}
Note that we used above also the identity $p(n+\beta q)/q = n+sp$
to obtain the correct exponent in the denominator.
\end{proof}

The following corollary is a consequence of Theorem~\ref{thm:fractional_hardy_visible}
and Proposition~\ref{prop:unif}.

\begin{corollary}\label{cor:fractional_hardy_uniform_thick}
Let $0<s<1$ and $1<p<\infty$ satisfy $0<sp<n$. Assume that $G\sub\R^n$ is a uniform domain 
and that there exists $n - sp<\lambda \le n $ and $C>0$ such that
\begin{equation}\label{e.inner}
\mathcal{H}^\lambda_\infty(B(x,2\dist(x,\partial G))\cap \partial G) \ge C\dist(x,\partial G)^\lambda
\end{equation}
for all $x\in G$. 
Then $G$ admits an $(s,p)$-Hardy inequality.
\end{corollary}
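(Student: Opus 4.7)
The plan is to verify the hypothesis~\eqref{eq:main cond} of Theorem~\ref{thm:fractional_hardy_visible} by combining the inner boundary density assumption~\eqref{e.inner} with Proposition~\ref{prop:unif}, and then apply the theorem directly. Since $G$ is a uniform domain, Proposition~\ref{prop:unif} supplies a constant $c = c(n,G) \ge 1$ such that
\[
\bdry G \cap B(x, 2\dist(x,\bdry G)) \sub \vbdry_{x,c} G
\]
for every $x \in G$. Taking $\lambda$-Hausdorff contents and invoking monotonicity, together with~\eqref{e.inner}, yields
\[
\Ha^\lambda_\infty\bigl(\vbdry_{x,c} G\bigr) \ge \Ha^\lambda_\infty\bigl(\bdry G \cap B(x, 2\dist(x,\bdry G))\bigr) \ge C\,\dist(x,\bdry G)^\lambda
\]
for all $x \in G$, which is precisely the visual boundary condition~\eqref{eq:main cond} with constants $C_0 = C$ and $c$.

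Having verified all the hypotheses, Theorem~\ref{thm:fractional_hardy_visible} applied with this $\lambda \in (n-sp,\, n]$ gives the $(s,p)$-Hardy inequality on $G$. There is no real obstacle here: the corollary is essentially a rewriting of Theorem~\ref{thm:fractional_hardy_visible} under the observation (Proposition~\ref{prop:unif}) that, for uniform domains, the relevant portion of the boundary is automatically visible through suitable John curves, so that the Hausdorff content condition on the visual boundary reduces to the more intrinsic inner boundary density condition~\eqref{e.inner}.
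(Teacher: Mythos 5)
Your proposal is correct and follows exactly the paper's intended argument: Proposition~\ref{prop:unif} gives the inclusion of $\bdry G\cap B(x,2\dist(x,\bdry G))$ into the visual boundary, so monotonicity of $\Ha^\lambda_\infty$ turns the inner density condition~\eqref{e.inner} into the hypothesis~\eqref{eq:main cond} of Theorem~\ref{thm:fractional_hardy_visible}. Nothing further is needed.
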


Observe that Corollary~\ref{cor:fractional_hardy_uniform}, stated in the Introduction, 
is now  a consequence of Corollary~\ref{cor:fractional_hardy_uniform_thick}
and Proposition~\ref{prop:fatness implies density}. We also
refer to the discussion in the beginning of Section~\ref{s.comparison}.

We conclude this section with a partial relaxation of the
visual boundary condition of Theorem~\ref{thm:fractional_hardy_visible}.
The following modification of Lemma~\ref{lemma:Hausdorff estimate}
shows that we do not need to assume the visibility of the boundary
with respect to cubes $Q\in\W(G)$ if the nearby complement is of zero measure. 

\begin{lemma}\label{lemma:Hausdorff estimate no acces}
Let $G\sub\R^n$ be an open set and assume that
$0<\beta<1$, $1\le q<\infty$, and $n-\beta q < \lambda \le n$.
Fix $c\geq 1$ and let $Q\in\W(G)$.
Assume that $L>1$ and $|G^c\cap LQ|=0$.
Then there exists $C=C(n,c,\lambda,q,\beta,L)>0$, independent of $Q$, such that
\begin{equation}\label{eq:bound on Ha_II}
 \Ha_{\infty}^\lambda (\bdry G \cap LQ)|u_Q|^{q} \leq
     C  \ell(Q)^{\lambda-n+\beta q} \int_{LQ\cap G}\int_{LQ\cap G}
        \frac{|u(x)-u(y)|^q}{|x-y|^{n+\beta q}}\,dy\,dx
\end{equation}
for every $u\in C_0^\infty(G)$.
\end{lemma}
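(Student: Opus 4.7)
The plan is to follow the template of the proof of Lemma~\ref{lemma:Hausdorff estimate}, but with the John-curve-based Whitney chain construction replaced by one that uses the hypothesis $|G^c\cap LQ|=0$.

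By homogeneity we may assume $|u_Q|=1$, and we fix $w\in\bdry G\cap LQ$. The task is to build a chain of Whitney cubes $(Q_0,Q_1,\ldots)\subset\W(G)$ with $Q_0=Q$, consecutive cubes touching, $\ell(Q_i)\to 0$ geometrically, all $3Q_i$ contained in $LQ$, and cubes accumulating at $w$. The key observation is that since $|G^c\cap LQ|=0$, the closed set $G^c\cap LQ$ has empty interior, so $G\cap LQ$ is open and dense in $LQ$. Via a Fubini-type argument on segments emanating from $w$ (exploiting the zero measure of $G^c\cap LQ$), one produces a rectifiable arc-length parametrized curve $\gamma\colon[0,\ell]\to\overline{LQ}$ joining $w=\gamma(0)$ to $x_Q=\gamma(\ell)$ with $\gamma((0,\ell])\subset G$ and satisfying a John-type bound $\dist(\gamma(t),\bdry G)\ge t/c'$ for some constant $c'=c'(n,L)\ge 1$. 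With such a $\gamma$ in hand, the recursive construction of the original proof applies verbatim: setting $t_0=\ell$ and picking $t_{i+1}$ as the smallest parameter at which $\gamma$ enters a new Whitney cube touching $Q_i$ yields the touching bound $t_i-t_{i+1}\ge \ell(Q_i)/5$, the comparability $\ell(Q_i)\simeq t_i$, the geometric decay $t_i\le (1-C(n,c'))^i t_0$, and the inclusion $3Q_w\subset LQ$ for the eventually selected cube.

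Once the chain is constructed, the remainder of the argument is a verbatim adaptation of Lemma~\ref{lemma:Hausdorff estimate}. The fractional $(1,q,\beta)$-Poincar\'e inequality (Lemma~\ref{lemma:poincare_cube}) applied on $3Q_i$ gives
\[
|u_{Q_i}-u_{Q_{i+1}}|\le C\,\ell(Q_i)^{\beta-n/q}\left(\int_{3Q_i}\int_{3Q_i}\frac{|u(x)-u(y)|^q}{|x-y|^{n+\beta q}}\,dy\,dx\right)^{1/q};
\]
telescoping against $u(w)=0$ yields $1=|u_Q-u(w)|\le\sum_i |u_{Q_i}-u_{Q_{i+1}}|$; a pigeonhole comparison with $\sum_i\ell(Q_i)^\delta\le C\,\ell(Q)^\delta$ for $\delta=(\lambda-n+\beta q)/q>0$ selects a distinguished cube $Q_w$ in the chain satisfying
\[
\ell(Q_w)^\lambda\le C\,\ell(Q)^{\lambda-n+\beta q}\int_{3Q_w}\int_{3Q_w}\frac{|u(x)-u(y)|^q}{|x-y|^{n+\beta q}}\,dy\,dx;
\]
and a $5r$-covering of $\bdry G\cap LQ$ by the balls $B_w=B(x_{Q_w},\rho\,\ell(Q_w))$, combined with the resulting pairwise disjointness of the cubes $3Q^j\subset LQ\cap G$ and summation over the subcollection, yields \eqref{eq:bound on Ha_II}.

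The main obstacle is the construction of the John-type connecting curve $\gamma$ from the measure-zero hypothesis alone. The density of $G\cap LQ$ in $LQ$ makes this plausible, but producing a curve with the quantitative distance-to-boundary control requires a careful selection argument exploiting that $G^c\cap LQ$ is null. Once this is in place, the rest of the proof parallels the original almost word for word.
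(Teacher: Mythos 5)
Your reduction to the original Lemma~\ref{lemma:Hausdorff estimate} hinges on one unproven claim: that $|G^c\cap LQ|=0$ allows you to join every $w\in\bdry G\cap LQ$ to $x_Q$ by a rectifiable curve $\gamma\sub G\cup\{w\}$ satisfying a John condition $\dist(\gamma(t),\bdry G)\ge t/c'$ with $c'=c'(n,L)$. This is a genuine gap, and the claim is in fact false in general. A closed null set can still obstruct quantitative access to the boundary: in the plane, surround a point $w$ by concentric square ``walls'' (segments, hence of measure zero) at distances $2^{-k}$ from $w$, each wall having a single gap of width $4^{-k}$. The union of the walls together with $w$ is closed and has measure zero, yet any curve from $w$ to the exterior must, at arclength $t\gtrsim 2^{-k}$, pass through the $k$-th gap, where $\dist(\gamma(t),\bdry G)\le 4^{-k}\ll t/c'$ for every fixed $c'$; so no John curve with a uniform constant exists from $w$. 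The Fubini-type selection you invoke cannot repair this: for a.e.\ direction the segment from $w$ meets $G^c\cap LQ$ only in a set of linear measure zero, but that set is generally nonempty, so the segment is not a curve in $G$ at all, let alone one with the required lower bound on $\dist(\cdot,\bdry G)$. Indeed, if such curves always existed, the hypothesis of this lemma would essentially reduce to the visibility hypothesis of Lemma~\ref{lemma:Hausdorff estimate}, defeating its purpose.

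The paper avoids curves entirely. For $w\in\bdry G\cap LQ$ it uses the chain $\widetilde C(Q,w)=\{Q_0,Q_1,\dots\}$ with $Q_0=Q$, $Q_1=LQ$, and nested cubes $w\in Q_{i+1}\sub Q_i$, $\ell(Q_{i+1})=\ell(Q_i)/2$ for $i\ge1$; these cubes need not lie in $G$, and the fractional $(1,q,\beta)$-Poincar\'e inequality of Lemma~\ref{lemma:poincare_cube} is applied to them directly (for $u\in C_0^\infty(G)$ extended by zero, $u_{Q_i}\to u(w)=0$ by continuity). The telescoping, pigeonhole selection of $Q_w$, and the $5r$-covering then run as in your outline, producing a bound with double integrals over $LQ\times LQ$; only at the very last step is the hypothesis $|G^c\cap LQ|=0$ used, to replace $\int_{LQ}\int_{LQ}$ by $\int_{LQ\cap G}\int_{LQ\cap G}$. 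So the measure-zero assumption enters as a harmless bookkeeping device at the end, not as a source of connectivity or visibility, which is precisely the step your proposal does not supply.
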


\begin{proof}
 Let us indicate here the main differences compared to the proof of Lemma~\ref{lemma:Hausdorff estimate}. 
 For $w\in \bdry G \cap LQ$ we can now consider, instead of the John-type chain $C(Q,w)$, a chain
 $\widetilde C(Q,w)=\{Q_0,Q_1,\ldots\}$ of cubes, where $Q_0=Q$, $Q_1=LQ$, $w\in Q_{i+1}\sub Q_{i}$ for every $i\ge 1$,
 and $\ell(Q_{i+1})=\ell(Q_{i})/2$ for every $i\ge 1$. Estimate \eqref{eq:chain} (but with the integrals taken over $Q_i$)
 is a simple consequence of the $(1,q,\beta)$-Poincar\'e inequality for cubes, 
 and thus we find from the chain $\widetilde C(Q,w)$
 a cube $Q_w$ as in \eqref{eq:good cube} (note that here the constant depends on $L$).
 We can now choose disjoint cubes $Q_j=Q_{w_j}$ 
 such that the dilated cubes $5Q_j$ cover the
 set $\bdry G \cap LQ$, and thus we obtain, just like in the proof of Lemma \ref{lemma:Hausdorff estimate}, that
   \begin{equation}\label{eq:Hastimate2}
   \begin{split}
  \Ha^\lambda_{\infty}(\bdry G \cap LQ)
  & \leq C\sum_{j=1}^N \ell(Q_{j})^{\lambda}\\
  & \leq C\sum_{j=1}^N \ell(Q)^{\lambda-n+\beta q}\int_{Q_j}\int_{Q_j}\frac{|u(x)-u(y)|^q}{|x-y|^{n+\beta q}}\,dy\,dx\\
  & \leq C\sum_{j=1}^N \ell(Q)^{\lambda-n+\beta q}\int_{Q_j}\int_{LQ}
              \frac{|u(x)-u(y)|^q}{|x-y|^{n+\beta q}}\,dy\,dx\\
  & \leq C \ell(Q)^{\lambda-n+\beta q}\int_{LQ}\int_{LQ}
              \frac{|u(x)-u(y)|^q}{|x-y|^{n+\beta q}}\,dy\,dx\\
  & \leq C \ell(Q)^{\lambda-n+\beta q}\int_{LQ\cap G}\int_{LQ\cap G}
              \frac{|u(x)-u(y)|^q}{|x-y|^{n+\beta q}}\,dy\,dx.
   \end{split}
  \end{equation}
  The last line above follows from the assumption $|G^c\cap LQ|=0$.
\end{proof}

\begin{remark}\label{rmk:zero complement}
Let $G\sub\R^n$ be an open set and assume that there exists $0<\lambda\le n$, $c\ge 1$, $L\ge 1$, and $C>0$
such that for every $Q\in\W(G)$ (at least) one of the following conditions holds:
\begin{itemize}
\item[(a)] $\Ha^\lambda_\infty\big(\bdry^{\vis}_{x_Q,c} G\big)\geq C \ell(Q)^\lambda$\,; or,

\item[(b)] $|G^c\cap LQ|=0$ and $\Ha^\lambda_\infty\big(\bdry G \cap LQ \big)\geq C \ell(Q)^\lambda$.
\end{itemize}
Then we can use either Lemma~\ref{lemma:Hausdorff estimate} or
Lemma~\ref{lemma:Hausdorff estimate no acces} to yield estimate~\eqref{upper_bound}
in the proof of Theorem~\ref{thm:fractional_hardy_visible}, and we conclude that
such an open set $G$ admits $(s,p)$-Hardy inequalities for all $1<p<\infty$ and $0<s<1$ satisfying
$n-\lambda<sp<n$. 

Note that in the case $|G^c|=0$, where we can use the above case (b) for every cube $Q\in\W(G)$, 
the same conclusion also follows from Theorem~\ref{t.equiv} and Theorem~\ref{fractional_hardy_fat} below.
\end{remark}

\section{Counterexamples in the plane}\label{s.counterexamples}

In this section, we show that $(s,p)$-uniform fatness of the boundary
$\bdry G$ is not sufficient for $G$ to admit an $(s,p)$-Hardy inequality. Similar
examples exist in higher dimensions as well, but, for the sake of clarity, we confine 
ourselves here to the planar case. Let us formulate this as a theorem:

\begin{theorem}\label{t.counter}
Let $1<p<\infty$ and $0<s<1$ be such that $0<sp\le 1$. Then there exists
an open set $G\sub\R^2$ whose boundary is $(s,p)$-uniformly fat, but which does not admit
$(s,p)$-Hardy inequalities. In addition, there exists a bounded domain $G'\sub\R^2$,
with a locally $(s,p)$-uniformly fat boundary, such that $G'$ does not admit
$(s,p)$-Hardy inequalities.
\end{theorem}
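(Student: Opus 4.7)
The plan is to construct the two examples explicitly. For the unbounded case, $G$ would be a pairwise disjoint, well-separated union of countably many translated copies of a single bounded planar building block $D$; for the bounded case, $G' = D$ itself, which is automatically a bounded domain if $D$ is chosen connected. The main task is therefore to construct a bounded connected open set $D \sub \R^2$ whose boundary is $\lambda$-regular for some $\lambda \in (2-sp, 2]$, and hence locally $(s,p)$-uniformly fat by Proposition~\ref{t.lambda_sets_are_fat}, but on which the $(s,p)$-Hardy inequality fails. The main obstruction to naive choices is that natural $\lambda$-regular-boundary domains like generalized snowflake regions tend to be uniform, whence Corollary~\ref{cor:fractional_hardy_uniform} would grant Hardy; so $D$ must be constructed to be \emph{non-uniform}, by introducing an inner degeneracy such as an inward cusp, an inward spiral, or a rooms-and-passages structure, while keeping all walls $\lambda$-regular to preserve the fatness condition.

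A concrete candidate is to take $D$ as the interior of a Jordan curve $\gamma$ consisting of an outer $\lambda$-regular snowflake-type boundary (with Hausdorff dimension $\lambda \in (2-sp, 2)$) together with an inward-pointing ``finger'' of fixed length $L$ and small width $w$, whose two long walls are scaled $\lambda$-regular curves of matching dimension. Then $\partial D = \gamma$ is locally $\lambda$-regular, and from points deep inside the finger the visibility condition of Theorem~\ref{thm:fractional_hardy_visible} fails, since the largest $c$-John subdomain reaching such a point is confined to only a short portion of $D$. To prove that $(s,p)$-Hardy fails on $D$, consider test functions $u_\eta \in C_0^\infty(D)$ supported inside the finger, equal to $1$ on a long central section and transitioning smoothly to $0$ only near the mouth of the finger. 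Inside the finger, $\dist(x,\partial D) \simeq w$, which gives
\[
  \int_D \frac{|u_\eta(x)|^p}{\dist(x,\partial D)^{sp}}\,dx \gtrsim L\, w^{1-sp}
\]
for $sp<1$ (a logarithmic modification handles $sp=1$), while $[u_\eta]_{W^{s,p}(D)}^p$ localizes essentially to the transition region at the mouth of the finger and can be bounded by a quantity of lower order in $L$. Taking $L$ large (with $w$ fixed, or sent to $0$ at a suitable rate) makes the Hardy quotient arbitrarily large. For the unbounded case $G$, the cross-terms in $[u_\eta]_{W^{s,p}(G)}^p$ between the copy of $D$ carrying the support of $u_\eta$ and distant copies decay with the separation and contribute only a harmless constant, so the failure of Hardy on $D$ carries over to $G$.

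The main technical obstacle I anticipate is the delicate $W^{s,p}$-seminorm estimate for $u_\eta$: the seminorm sees both the local oscillation at the mouth of the finger and the long-range interactions between the finger and the outer body of $D$, with the latter interactions filtered through the fractal $\lambda$-regular walls. One must decompose the Gagliardo double integral and use $\lambda$-regularity together with the thinness $w$ of the finger to bound those long-range terms. A second delicate point is arranging the $\lambda$-regularity of $\partial D$ to be genuinely uniform near the geometrically sensitive regions, in particular near the mouth and the tip of the finger where the inward ``fat'' fractal wall meets the outer snowflake; the scaling parameters of the $\lambda$-regular walls must be chosen compatibly with the ambient snowflake.
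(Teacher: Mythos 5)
There is a genuine gap, and it lies in the central claim that the Gagliardo seminorm of your test functions ``localizes essentially to the transition region at the mouth of the finger.'' Since $u_\eta\in C^\infty_0(D)$, it must vanish in a neighbourhood of the finger's walls, so there is a transition layer running along the \emph{entire} length of the finger, not just at the mouth, and its cost is of the same order as your left-hand side. Concretely, with walls that are $\lambda$-regular for some $\lambda>2-sp$ (which you need for the fatness of $\partial D$; straight walls have zero $(s,p)$-capacity when $sp\le 1$), if $u_\eta$ vanishes within distance $\varepsilon$ of a wall and equals $1$ at distance $2\varepsilon$ (necessarily $\varepsilon\lesssim w$), then pairs $x,y$ with $u_\eta(x)=1$, $u_\eta(y)=0$ and $|x-y|\simeq\varepsilon$ along the wall already give $[u_\eta]_{W^{s,p}(D)}^p\gtrsim \mathcal{H}^\lambda_\infty(\text{wall})\,\varepsilon^{2-\lambda-sp}$, which is comparable to $\int_D |u_\eta|^p\dist(x,\partial D)^{-sp}dx$ for the same function (both are dominated by the near-wall region, since $2-\lambda-sp<0$). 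So the Hardy quotient of these functions stays bounded. In fact the proposed domain very plausibly \emph{admits} the $(s,p)$-Hardy inequality: the hypothesis \eqref{eq:main cond} of Theorem~\ref{thm:fractional_hardy_visible} is local at scale $\dist(x,\partial G)$, and a point deep in the finger, having $\dist(x,\partial D)\simeq w$, sees a piece of the adjacent $\lambda$-regular wall of diameter $\simeq w$ inside a small John subdomain; so with von Koch--type walls the visibility condition holds at every point of $D$, and then no choice of test functions can defeat the inequality. Non-uniformity of $D$ (failure of visibility at \emph{large} scales relative to the finger's length) is not what the hypothesis measures, so ``uniform domain minus Hardy'' cannot be contradicted this way.

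The paper's mechanism is essentially the opposite of yours: the fat part of the boundary must be \emph{hidden from, not adjacent to}, the region carrying the test functions. One starts from Dyda-type functions approximating a constant on a macroscopic region whose visible boundary ``screen'' has dimension exactly $2-sp$, so that the Hardy integral diverges (or scales like $\ell_k^{2-sp}m$) while the Gagliardo seminorm stays bounded; fatness of $\partial G$ is then achieved by placing sets of positive measure (fat Cantor sets, or the tunneled snowflake $F_\lambda$) \emph{behind} thin corridors of the complement, whose measure is made quantitatively small (conditions (3), \eqref{e.basic_whitney}, \eqref{eq:small area}) precisely so that the interaction of the test functions with the hidden fat set adds only a bounded amount to the seminorm. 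In your construction the fat walls sit at distance $\simeq w$ from the support of $u_\eta$, and the required decay of $u_\eta$ at those walls destroys the imbalance. A secondary issue: a bounded boundary can never be $(s,p)$-uniformly fat (the condition is required for all $r>0$), and ``well-separated'' translated copies of a bounded block need not repair this unless they are spread over all of $\R^2$ with uniformly bounded gaps; the paper instead produces an unbounded $2$-regular boundary and invokes Proposition~\ref{t.lambda_sets_are_fat}.
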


We base our constructions on the examples
in~\cite{Dyda}, but we need significant modifications
in order to adapt them for our purposes.
To clarify this, let  $1<p<\infty$ and $0<s<1$ be such that $sp\le 1$.
Then the domain \[{G}^{\textup{core}}:= (-1,1)^2\subset \R^2\,\]
does not admit an $(s,p)$-Hardy inequality,
we refer to \cite{Dyda} or the proof of Proposition \ref{l.suffices}.
Albeit the complement of $G^{\textup{core}}$ is $(s,p)$-uniformly fat,
this counterexample does not suffice for
the proof of Theorem~\ref{t.counter}, as the boundary of ${G}^{\textup{core}}$ is still too `thin';
it is $(s,p)$-uniformly fat only for $sp>1$, exactly the same range for which the set ${G}^{\textup{core}}$
admits $(s,p)$-Hardy inequalities.

We give two possible ways to address this problem: 
In the first one, given in Sections~\ref{s.outline}--\ref{s.G-construction},
we place a fat Cantor set---one with positive measure---to each of the Whitney cubes
in $\mathcal{W}(\R^2\setminus \bar{G}^{\textup{core}})$.
This is done in a quantitative manner, ensuring that $(s,p)$-Hardy inequalities still remain false.
The construction yields an open set $G$ whose boundary is an unbounded $2$-regular set, and
consequently, by Proposition~\ref{t.lambda_sets_are_fat}, the boundary $\partial G$ is indeed $(s,p)$-uniformly fat,
thus proving the first claim in Theorem~\ref{t.counter}.
The second claim is proved in Sections~\ref{s.local halfspace}--\ref{s.local sf}. There we consider localized examples,
and the resulting sets $G$ are even domains. However, in these latter examples much more care must be taken in the choices 
of the test functions and in the related calculations.

\subsection{Outline of the first example}\label{s.outline}
The construction consists of two steps. 
In the {\em first step} we construct a John domain $G^{\textup{core}}$ satisfying the following conditions (A)--(D):
\begin{itemize}
\item[(A)]
 there is a constant $C>0$ such that for any $m\in \N$, the boundary $\partial G^{\textup{core}}$ can be covered
by using at most $Cm^{2-sp}$ balls in the family
$\{B(x,1/(2m))\,:\,x\in \partial G^{\textup{core}}\}$;
\item[(B)]
$\int_{G^{\textup{core}}}\dist(x,\partial G^{\textup{core}})^{-sp}\,dx = \infty$;
\item[(C)]
$\partial G^{\textup{core}}=\partial(\R^2\setminus \bar G^\textup{core})$
and $\lvert \partial G^{\textup{core}}\rvert=0$;
\item[(D)]
the following `co-plumpness' condition is satisfied
for some $\eta\in (0,1)$: 
for each $x\in \R^2\setminus G^{\textup{core}}$ and each $r> 2\dist(x,\partial G^{\textup{core}})$, there is
$Q\in \mathcal{W}(\R^2\setminus \bar G^{\textup{core}})$
such that $\lvert Q\rvert \ge \eta r^2$ and $Q\subset B(x,r)$.
\end{itemize}
Conditions (A) and (B) are taken from~\cite{Dyda}, and they imply that $G^{\textup{core}}$ does not
admit an $(s,p)$-Hardy inequality. The two other conditions (C) and (D)
are technical, required in the second step of the construction.
In the case of $sp=1$, we may begin our construction with $G^{\textup{core}}:=(-1,1)^2$.
For the remaining  cases $0<sp<1$, we refer to Section~\ref{s.sel}.

In the {\em second step}, 
we construct an open set $G\sub\R^2$ with
the following properties:
\begin{itemize}
\item[(1)] $G^{\textup{core}}\subset G$;
\item[(2)] $\partial G^{\textup{core}} \subset \partial G$;
\item[(3)] $\sum_{j=-\infty}^\infty 2^{jsp}\sum_{Q\in\mathcal{W}_j(\R^2\setminus \bar G^{\textup{core}})}\lvert Q\cap G\rvert <\infty$;
\item[(4)] $\partial G$ is $(s,p)$-uniformly fat.
\end{itemize}
The construction of such sets is given in Section~\ref{s.G-construction}. 
Let us now show 
that a set $G$ satisfying (1)--(4) indeed gives a desired counterexample:
\begin{proposition}\label{l.suffices}
Suppose that an open set $G$ satisfies
conditions (1)--(3) above. Then $G$ does not admit an $(s,p)$-Hardy inequality.
\end{proposition}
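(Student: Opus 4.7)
The strategy is to lift Dyda-type counterexample functions from $G^{\textup{core}}$ to $G$ by zero-extension, using the summability condition~(3) to absorb the new boundary crossterm that the extension creates. By Dyda's construction, conditions (A) and (B) provide a sequence of test functions $u_m\in C_0^\infty(G^{\textup{core}})$ with $\lVert u_m\rVert_\infty\le 1$ such that
\[
A_m:=\int_{G^{\textup{core}}}\frac{|u_m|^p}{\dist(x,\bdry G^{\textup{core}})^{sp}}\,dx\xrightarrow{m\to\infty}\infty,
\]
while
\[
B_m:=\int_{G^{\textup{core}}}\int_{G^{\textup{core}}}\frac{|u_m(x)-u_m(y)|^p}{|x-y|^{n+sp}}\,dy\,dx
\]
stays bounded. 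Let $\tilde u_m$ denote the extension by zero; since $\spt(u_m)\sub G^{\textup{core}}\sub G$ by~(1), one has $\tilde u_m\in C_0^\infty(G)$. Condition~(2) gives $\bdry G^{\textup{core}}\sub\bdry G$, so $\dist(x,\bdry G)\le\dist(x,\bdry G^{\textup{core}})$ for every $x\in G^{\textup{core}}$; thus the left-hand side of~\eqref{e.hardy} for $G$ applied to $\tilde u_m$ dominates $A_m$ and tends to infinity.

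The main step is to bound the right-hand side of~\eqref{e.hardy} for $G$ uniformly in $m$. Since $\tilde u_m$ vanishes outside $G^{\textup{core}}$, it equals $B_m+2I_m$ with
\[
I_m:=\int_{G\setminus G^{\textup{core}}}\int_{G^{\textup{core}}}\frac{|u_m(y)|^p}{|x-y|^{n+sp}}\,dy\,dx.
\]
Because $G$ is open and $\bdry G^{\textup{core}}\sub\bdry G$, one has $\bdry G^{\textup{core}}\cap G=\emptyset$, so $G\setminus G^{\textup{core}}\sub\R^2\setminus\overline{G^{\textup{core}}}$. I decompose the latter along the Whitney family $\W(\R^2\setminus\overline{G^{\textup{core}}})$, whose underlying boundary equals $\bdry G^{\textup{core}}$ by~(C). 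For $x\in Q\in\W_j(\R^2\setminus\overline{G^{\textup{core}}})$, the Whitney property gives $\dist(x,\bdry G^{\textup{core}})\gtrsim 2^{-j}$, and since $x\notin\overline{G^{\textup{core}}}$ while $y\in G^{\textup{core}}$, the segment $[x,y]$ crosses $\bdry G^{\textup{core}}$, forcing $|x-y|\ge\dist(x,\bdry G^{\textup{core}})$. Combining these with $\lVert u_m\rVert_\infty\le 1$ and the elementary estimate $\int_{|z|\ge r}|z|^{-n-sp}\,dz\le Cr^{-sp}$ yields $\int_{G^{\textup{core}}}|u_m(y)|^p|x-y|^{-n-sp}\,dy\le C\,2^{jsp}$. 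Integrating in $x$ over $Q\cap G$ and summing over the Whitney family, condition~(3) gives a uniform bound $I_m\le C_0<\infty$ independent of $m$.

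Combining the two estimates, the ratio of the two sides of~\eqref{e.hardy} on $G$ along $\tilde u_m$ is at least $A_m/(B_m+2I_m)\ge A_m/(\sup_k B_k+2C_0)\to\infty$, which rules out any $(s,p)$-Hardy inequality for $G$. The main obstacle is the uniform bound on the crossterm $I_m$: conditions~(1) and~(2) only provide qualitative containments, and it is the identification of the correct Whitney weight $2^{jsp}$, arising from the decay of $|x-y|^{-n-sp}$ across the obstacle $\bdry G^{\textup{core}}$, that converts~(3) into the needed quantitative control.
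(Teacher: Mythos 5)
Your argument is correct and follows essentially the same route as the paper's proof: Dyda's test functions on $G^{\textup{core}}$, the splitting of the $G\times G$ energy into the core part (bounded via condition (A)) plus the crossterm, and the Whitney decomposition of $\R^2\setminus\overline{G^{\textup{core}}}$ with the lower bound $|x-y|\gtrsim 2^{-j}$ producing the weight $2^{jsp}$ that condition (3) controls. The only cosmetic point is that the Whitney property bounds $\dist(x,\partial(\R^2\setminus\overline{G^{\textup{core}}}))$ rather than $\dist(x,\partial G^{\textup{core}})$ directly; this is exactly where the paper invokes condition (C), which is available to you as well, so there is no gap.
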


\begin{proof}
We use test functions $u_m\in C^\infty_0(G^\textup{core})$
with $m\ge 1$ large enough. These are defined by
\[
u_m := \varphi_{8m} * \chi_{\{x\in G^{\textup{core}}\,:\, \dist(x,\partial G^{\textup{core}})\ge 3/(8m)\}}\,,
\]
where $\varphi_m = m^2 \varphi(mx)$ and $\varphi\in C^\infty_0(B(0,1))$
is a non-negative bump function, with
$\int \varphi \,dx=1$.
 Some of the computations in \cite{Dyda} are invoked in the sequel, and actually the only
properties we explicitly need for the functions $u_m$ are that $0\le u_m\le 1$ for all $m$,
and that they converge pointwise to $\chi_{G^{\textup{core}}}$ monotonically,
i.e. $u_{m+1}\ge u_m$ for all $m$.

By conditions (1), (2), and (B),
\[
\int_G \frac{\lvert u_m(x)\rvert^p}{\dist(x,\partial G)^{sp}}\,dx
\ge \int_{G^{\textup{core}}} \frac{\lvert u_m(x)\rvert^p}{\dist(x,\partial G^{\textup{core}})^{sp}}\,dx
\xrightarrow{m\to\infty}\,\infty\,.
\]
Hence, it suffices to show that the following integrals are uniformly bounded in $m$:
\begin{align*}
&\int_G \int_G \frac{\lvert u_m(x)-u_m(y)\rvert^p}{\lvert x-y\rvert^{2+sp}} \,dy\,dx\\
&= \bigg\{\int_{G^{\textup{core}}}\int_{G^{\textup{core}}}
+ 2\int_{G\setminus G^{\textup{core}}}\int_{G^{\textup{core}}}\bigg\}
\frac{\lvert u_m(x)-u_m(y)\rvert^p}{\lvert x-y\rvert^{2+sp}} \,dy\,dx\,.
\end{align*}
The integral
over the set $G^{\textup{core}}\times G^{\textup{core}}$ is uniformly bounded
in $m$; we refer to \cite[pp.\ 577--578]{Dyda} for a computation which relies on
the covering property (A).
Hence, it suffices to estimate the second integral,
\begin{align*}
S_m:&=\int_{G\setminus G^{\textup{core}}}\int_{G^{\textup{core}}}
\frac{\lvert u_m(x)-u_m(y)\rvert^p}{\lvert x-y\rvert^{2+sp}} \,dy\,dx\\
&= \sum_{j=-\infty}^\infty \sum_{Q\in\mathcal{W}_j(\R^2\setminus \bar G^{\textup{core}})}
\int_{G\cap Q}\int_{G^{\textup{core}}}\frac{\lvert u_m(y)\rvert^p}{\lvert x-y\rvert^{2+sp}} \,dy\,dx\,;
\end{align*}
we used above condition~(C), which assures that  $\partial G^{\textup{core}}$ has zero measure.
Again, by condition~(C), for every $x\in G\cap Q\subset \R^2\setminus \bar G^{\textup{core}}$
(with $Q$ as in the summation above)
and $y\in G^{\textup{core}}$,
\[
\lvert x-y\rvert \ge \dist(x,\partial G^{\textup{core}})
=\dist(x,\partial (\R^2\setminus \bar G^{\textup{core}}))\ge \diam(Q) \ge  2^{-j}\,.
\]
Since the test functions  $u_m$ are bounded by $1$, we may integrate
in polar coordinates in order to see that
\begin{align*}
S_m &\le \sum_{j=-\infty}^\infty \sum_{Q\in\mathcal{W}_j(\R^2\setminus \bar G^{\textup{core}})}
  \int_{G\cap Q}\int_{\R^2\setminus B(x,2^{-j})} \frac{1}{\lvert x-y\rvert^{2+sp}}\,dy\,dx\\
  &\lesssim \sum_{j=-\infty}^\infty 2^{jsp}\sum_{Q\in\mathcal{W}_j(\R^2\setminus \bar G^{\textup{core}})} \lvert G\cap Q\rvert<\infty\,.
\end{align*}
The last step follows from condition (3).
\end{proof}

\subsection{Construction of $G^{\textup{core}}$}\label{s.sel}
Let $1<p<\infty$ and $0<s<1$ be such that $sp<1$.
We construct a domain satisfying properties (A)--(D)
from the beginning of Section~\ref{s.outline}.
Let $\lambda=2-sp$ and let $G^{\textup{core}}$ be a snowflake domain,
which is a co-plump John domain
whose boundary $E:=\partial G^{\textup{core}}$
is a $\lambda$-regular set and satisfies $E=\partial (\R^2\setminus \bar G^{\textup{core}})$.
The usual von Koch snowflake, illustrated in Figure~1, 
has dimension $\lambda=\log 4/\log 3$, 
but the construction
can be modified to obtain a set $E$ of any dimension $\lambda\in (1,2)$;
see for instance~\cite[Sect.~2]{KOSKELA}.
In particular, $E$ has zero Lebesgue measure. Thus, conditions (C) and (D)
are satisfied by the construction

\begin{figure}[!htb]
\begin{center}
\includegraphics[width=6cm]{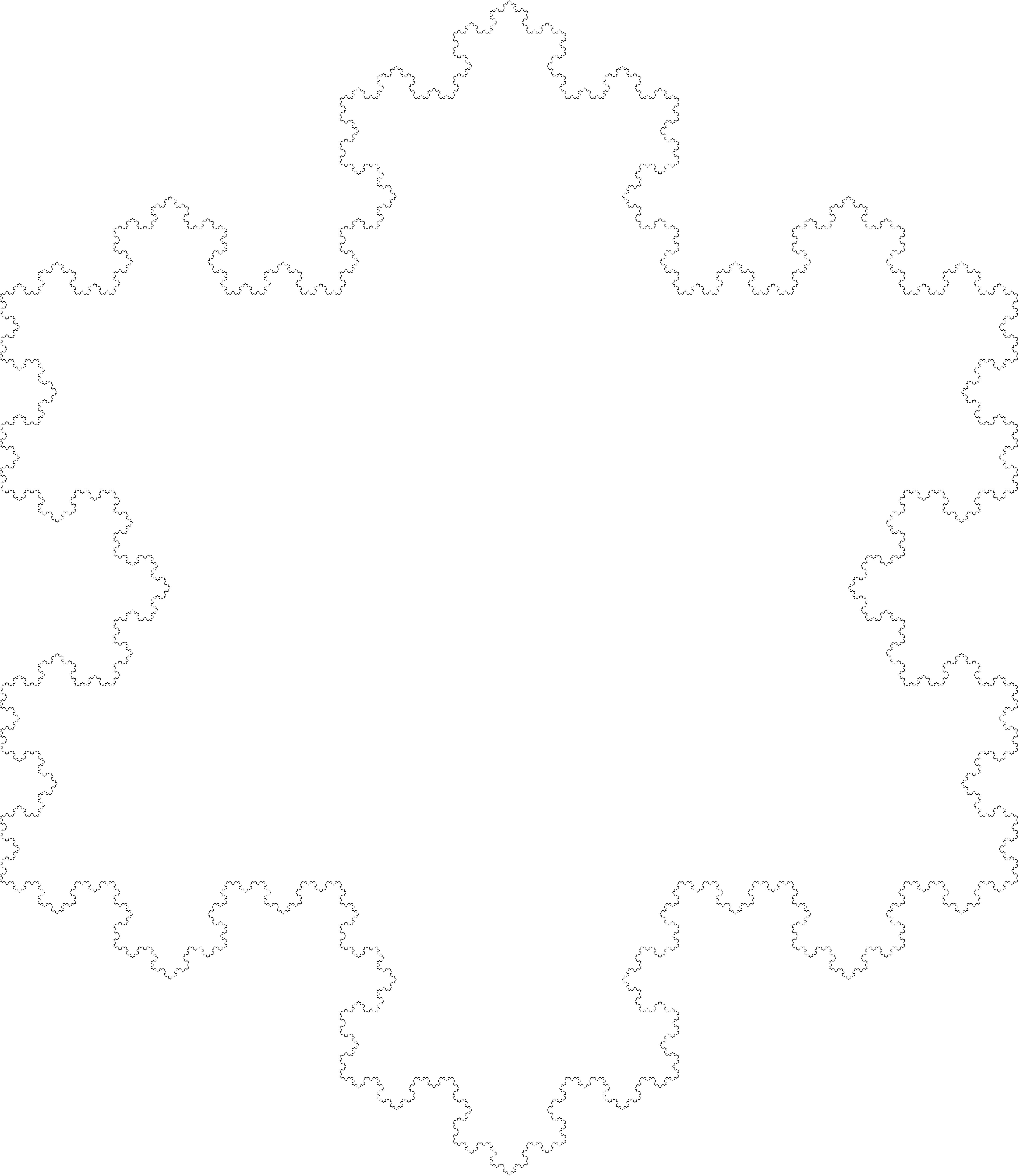}
\end{center}
\caption{ The usual von Koch snowflake. }
\end{figure}

In order to verify the remaining two conditions (A) and (B), we
use ideas from~\cite[Lemma 2.1]{lehrback}. Fix $0<r\le 1/5$ and
consider the family $\mathcal{F}_r=\{B(x,r)\,:\,x\in E\}$. By the $5r$-covering lemma, 
there are pairwise disjoint balls
$B_1^r,\ldots,B_{N_r}^r\in \mathcal{F}_r$ such that
$E\subset\cup_{i=1}^{N_r}5B_i^r$. By the $\lambda$-regularity condition
\eqref{e.lambda_set} and the pairwise disjointedness of the balls $B_i^r$, we
have
\begin{align*}
\mathcal{H}^\lambda (E) & \le \sum_{i=1}^{N_r}\mathcal{H}^\lambda (5B_i^r\cap E)
\le C5^\lambda  N_rr^\lambda \\ 
& \le C^25^\lambda\sum_{i=1}^{N_r} \mathcal{H}^\lambda (B_i^r\cap E )
\le C^2 5^\lambda\mathcal{H}^\lambda(E),
\end{align*}
and, hence, 
$C_1 r^{-\lambda} \le N_r \le C_2r^{-\lambda}$, where
the positive constants $C_1$ and $C_2$ do not depend on $r$, in particular
$N_r\simeq r^{-\lambda}$.
As a consequence, we obtain the covering property (A) for $G^{\textup{core}}$.

It remains to verify condition (B).
By denoting
\[E_r=\{x\in G^{\textup{core}}\,:\,\mathrm{dist}(x,E)\le r\}\,,\]
we have
\begin{align*}
&\int_{G^{\textup{core}}} \dist(x,E)^{\lambda-2}\,dx\\
&= \int_0^\infty |\{x\in G^{\textup{core}}\,:\,\dist(x,E)^{\lambda-2}\ge  t\}|\,dt
= \int_{0}^\infty |E_{t^{1/(\lambda-2)}}|\,dt\,.
\end{align*}
Thus, for the last condition, it suffices to show that, for some $C>0$ independent of $r$,
\begin{equation}\label{suff}
|E_r|\ge Cr^{2-\lambda},\quad r\in (0,1/5).
\end{equation}
We can deduce this estimate as follows. Let us first
observe that, by John property of $G^{\textup{core}}$, there
is a constant $C$ such that $|B_i^r \cap G^{\textup{core}}|\ge C|B_i^r|$. Hence,
by the pairwise disjointedness of the balls $B_i^r$,
\begin{align*}
|E_r| &\ge \bigg|\bigcup_{i=1}^{N_r} B_i^r\cap G^{\textup{core}}\bigg|
=\sum_{i=1}^{N_r} |B_i^r \cap G^{\textup{core}}|
\\&\ge C\sum_{i=1}^{N_r} |B_i^r|
=C\sum_{i=1}^{N_r} r^2 = CN_r r^2
\ge Cr^{2-\lambda},
\end{align*}
and we conclude that the
property (B) holds, i.e. $\int_{G^{\textup{core}}} \mathrm{dist}(x,E)^{-sp}\,dx = \infty$.

\subsection{Fat Cantor sets}\label{s.aux_compact}
We construct next an auxiliary compact set $K_j=K_j(Q_j)\subset Q_j$.
Here $Q_j\subset \R^2$ is a given closed cube with
$\ell(Q_j)=2^{-j}$, $j\in \Z$, and the construction is
parameterized by a given sequence $\{\varepsilon_{j,k}\}_{k\ge 0}$ of
real numbers in $(0,1/2)$ such that $\sum_{k=0}^\infty
\varepsilon_{j,k} < 1/2$.

\begin{figure}[!htb]
\begin{center}
\includegraphics[width=6cm]{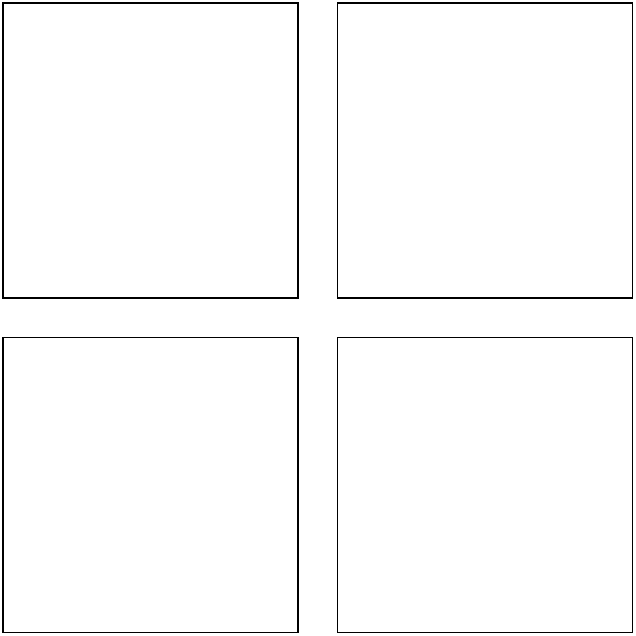}
\end{center}
\caption{ We have removed two rectangles from a closed cube
$Q_{j,k,m}$. The four closed sub cubes belong to generation $k+1$,
and they are denoted by $Q_{j,k+1,m_1},\ldots,Q_{j,k+1,m_4}$ for
some $\{m_1,\ldots,m_4\}\in \{1,2,\ldots, 4^{k+1}\}$. }
\end{figure}

We construct cubes $Q_{j,k,m}$ with common side length $\ell_{j,k}$, where
$m\in \{1,\ldots 4^k\}$ is used to index the cubes of generation $k\ge 0$. 
Let $Q_{j,0,1}=Q_{j}$ and define $\ell_{j,0}=2^{-j}$. At the inductive stage, we are given $k\ge 0$ and the cubes
$Q_{j,k,m}$
whose side lengths are $\ell(Q_{j,k,m})=\ell_{j,k}$, $m=1,\ldots, 4^k$. Each of these cubes is partitioned in five sets:
to the union of two overlapping perpendicular rectangles, both similar to
 $[0,\ell_{j,k}]\times (0,\varepsilon_{j,k}\ell_{j,k})$ and having midpoint $x_{Q_{j,k,m}}$; and to four similar
closed cubes of side length $\ell_{j,k+1}=(1-\varepsilon_{j,k})\ell_{j,k}/2$.
The resulting $4^{k+1}$ `generation $k+1$' cubes are denoted by $Q_{j,k+1,m}$, $m=1,\ldots,4^{k+1}$; i.e.,
we fix an ordering for these cubes. 
For an illustration, we refer to Figure~2.
It is immediate from the construction that
\begin{equation}\label{e.size_estimate}
\ell_{j,k}/4 \le \ell_{j,k+1}\le \ell_{j,k}/2\text{ and } \ell_{j,k'}\le 2^{k-k'}\ell_{j,k}\text{ if } k'\ge k\,.
\end{equation}

We define compact sets $K_{j,0}\supset K_{j,1}\supset\dotsb\supset K_j$ by setting
\[
K_{j,k} := \bigcup_{m=1}^{4^k} Q_{j,k,m}\,,\qquad K_j:=\bigcap_{k\ge 0}K_{j,k}\,.
\]
The following `quantitative  Lebesgue density theorem' for $K_j$ will be useful
in several occasions.
\begin{lemma}\label{l.diff}
Suppose that $k\ge 0$ and $m\in \{1,2,\ldots, 4^k\}$ are fixed. Then
\begin{equation}\label{e.recursive}
\lvert K_{j} \cap Q_{j,k,m}\rvert\ge \lvert Q_{j,k,m}\rvert\bigg(1 -  \sum_{k'=k}^{\infty} 2\varepsilon_{j,k'} \bigg)\,.
\end{equation}
\end{lemma}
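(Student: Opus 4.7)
The plan is to control the mass lost at each generation inside $Q_{j,k,m}$ and then sum the resulting bounds as a single telescoping total. The key single-step observation is that in passing from $K_{j,k'}$ to $K_{j,k'+1}$, one removes from each cube $Q_{j,k',m'}$ of side length $\ell_{j,k'}$ the union of two perpendicular rectangles of dimensions $\ell_{j,k'}\times \varepsilon_{j,k'}\ell_{j,k'}$; by inclusion--exclusion (the overlap is a square of side $\varepsilon_{j,k'}\ell_{j,k'}$), the area of this `cross' is at most
\[
2\varepsilon_{j,k'}\ell_{j,k'}^2 - (\varepsilon_{j,k'}\ell_{j,k'})^2 \le 2\varepsilon_{j,k'} |Q_{j,k',m'}|.
\]

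Next I would fix $Q_{j,k,m}$ and accumulate these single-step losses over all descendant generations $k'\ge k$. By construction, $Q_{j,k,m}$ contains exactly $4^{k'-k}$ generation-$k'$ sub-cubes $Q_{j,k',m'}$, each of measure $\ell_{j,k'}^2$, and applying the above single-step bound to each of them yields
\[
\bigl\lvert Q_{j,k,m}\cap (K_{j,k'}\setminus K_{j,k'+1})\bigr\rvert \le 4^{k'-k}\cdot 2\varepsilon_{j,k'}\ell_{j,k'}^2.
\]
The size estimate~\eqref{e.size_estimate} gives $\ell_{j,k'}^2\le 4^{k-k'}\ell_{j,k}^2 = 4^{k-k'}\lvert Q_{j,k,m}\rvert$, so the right-hand side is bounded by $2\varepsilon_{j,k'}\lvert Q_{j,k,m}\rvert$. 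Since $K_j\cap Q_{j,k,m}=\bigcap_{k'\ge k}K_{j,k'}\cap Q_{j,k,m}$ and the sets $K_{j,k'}\setminus K_{j,k'+1}$ form a disjoint cover of $Q_{j,k,m}\setminus K_j$, summing over $k'\ge k$ gives
\[
\lvert Q_{j,k,m}\setminus K_j\rvert \le \sum_{k'=k}^\infty 2\varepsilon_{j,k'}\,\lvert Q_{j,k,m}\rvert,
\]
which rearranges to the claimed lower bound for $\lvert K_j\cap Q_{j,k,m}\rvert$.

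There is essentially no serious obstacle: the argument is a deterministic geometric computation, and the assumed decay $\sum_{k=0}^\infty \varepsilon_{j,k}<1/2$ guarantees that the resulting lower bound stays positive (so that $K_j$ is genuinely `fat'). The only mildly delicate point is to write the inclusion--exclusion correctly for the two perpendicular strips, so that the factor $2$ on the right-hand side of~\eqref{e.recursive} is unambiguous and no adjustment is needed when the bound is iterated across generations.
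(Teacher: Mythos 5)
Your proof is correct and follows essentially the same argument as the paper: the key estimate---that the area removed at generation $k'$ inside $Q_{j,k,m}$ is at most $4^{k'-k}\cdot 2\varepsilon_{j,k'}\ell_{j,k'}^2 \le 2\varepsilon_{j,k'}\lvert Q_{j,k,m}\rvert$ by \eqref{e.size_estimate}---is exactly the paper's inductive step. The only difference is bookkeeping: you sum the disjoint generation-wise losses directly by countable additivity, whereas the paper proves the finite-level inequality \eqref{e.induction} by induction and then lets $k'\to\infty$; both are fine.
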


\begin{proof}
Since
$\lvert K_{j} \cap Q_{j,k,m}\rvert = \lim_{k'\to\infty} \vert K_{j,k'} \cap Q_{j,k,m}\vert$,
it suffices to prove the inequality
\begin{equation}\label{e.induction}
\vert K_{j,k'} \cap Q_{j,k,m}\vert\ge \vert Q_{j,k,m}\vert\bigg(1 -  \sum_{k''=k}^{k'-1} 2\varepsilon_{j,k''} \bigg)\,,\qquad k'\ge 0\,.
\end{equation}
Let us observe that $K_{j,k'}\cap Q_{j,k,m}\supset K_{j,k}\cap Q_{j,k,m}=Q_{j,k,m}$ for $0\le k'\le k$; hence,
inequality \eqref{e.induction} holds for these values of $k'$.
In the inductive stage, we assume \eqref{e.induction} for some $k'\ge k$. Then, by construction, the intersection
$K_{j,k'}\cap Q_{j,k,m}$ is a union of $4^{k'-k}$ cubes, each of which produces four `generation $k'+1$' cubes
after removal
of a set (which is a union of two perpendicular rectangles)  whose area is strictly bounded by
\[
2\varepsilon_{j,k'}\ell_{j,k'}^2 \le 2\varepsilon_{j,k'} \lvert Q_{j,k,m}\rvert 4^{k-k'}\,.
\]
In the last step above, we applied \eqref{e.size_estimate}.
By the induction assumption, the area of $K_{j,k'+1}\cap Q_{j,k,m}$ is bounded from below by
\begin{align*}
\lvert K_{j,k'}\cap Q_{j,k,m}\rvert - 4^{k'-k}2\varepsilon_{j,k'} \lvert Q_{j,k,m}\rvert4^{k-k'}
&\ge \lvert Q_{j,k,m}\rvert\bigg(1 -  \sum_{k''=k}^{(k'+1)-1} 2\varepsilon_{j,k''}\bigg)\,.
\end{align*}
The proof of inequality \eqref{e.induction} is complete.
\end{proof}

\begin{remark}\label{r.n_set}
A  consequence of Lemma \ref{l.diff} is the following. For every
$x\in K_j$ and every radius $0<r\le 10\sqrt 2\cdot 2^{-j}$,
\[
\lvert B(x,r)\cap K_j\rvert \ge r^2 (10 \sqrt 2)^{-2}\bigg(1-\sum_{k'=0}^\infty 2\varepsilon_{j,k'}\bigg)\,.
\]
(The factor $10\sqrt 2$ is not special, but chosen for our convenience.)
In order to verify this, we choose $k\ge 0$ such that
$\ell_{j,k}/4< r/(10  \sqrt 2) \le \ell_{j,k}$. This is possible by 
inequalities~\eqref{e.size_estimate}. Then we fix an index $m$ in such a way
that $x\in Q_{j,k,m}$. Now $Q_{j,k,m}\subset B(x,r)$, and the
claim follows from inequality~\eqref{e.recursive}  by obvious estimates.
\end{remark}

\subsection{The construction of $G$}\label{s.G-construction}
Finally, we construct an open set $G$ with properties (1)--(4) stated in Section~\ref{s.outline}.
Recall that we have at our disposal a (bounded) John domain $G^{\textup{core}}$
satisfying conditions (A)--(D) stated in Section~\ref{s.outline}.

We choose a sequence $\{\varepsilon_j\}_{j\in\Z}$, $\varepsilon_j\in(0,1/2)$, such that
\begin{equation}\label{e.basic_whitney}
\sum_{j=-\infty}^\infty \varepsilon_j2^{jsp}
\sum_{Q\in\mathcal{W}_j(\R^2\setminus \bar G^{\textup{core}})} \lvert Q\rvert < \infty\,.
\end{equation}
This is possible since $G^{\textup{core}}$ is bounded, and hence the inner sum is
finite for each $j$.
Then, we let $\varepsilon_{j,k}:=\varepsilon_j 2^{-k}/4$ for each
$j\in\Z$ and $k\ge 0$.
Observe, in particular, that
\begin{equation}\label{e.series}
\sum_{k'=0}^\infty \varepsilon_{j,k'} = \varepsilon_j/2<1/4\,,\qquad j\in\Z\,.
\end{equation}
Define a closed set that is parameterized by $\{\varepsilon_{j,k}\}$
and has no interior points,
\[
K:=\partial G^{\textup{core}} \cup \bigcup \{x\,:\,x\in K_j(Q)\text{ for some }
Q\in\mathcal{W}_j(\R^2\setminus \bar G^{\textup{core}})\text{ and } j\in\Z\}\,.
\]
Here the compact sets $K_j(Q)$ are as defined in Section~\ref{s.aux_compact}.
Observe that it is important to include the boundary of $G^{\textup{core}}$ to the
union above, as
otherwise $K$ would not be closed.
Finally, we define the desired open set to be
$G := \R^2\setminus K$.
Note that $\partial G=K$ since the closed set $K$ does not contain
any interior points.

\medskip
The proof of the first claim in Theorem~\ref{t.counter} is finished by
Proposition~\ref{l.suffices} and the following result.

\begin{proposition}
The open set $G=\R^2\setminus K$ satisfies conditions (1)--(4).
\end{proposition}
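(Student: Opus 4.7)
The plan splits into the four conditions: (1) and (2) are essentially definitional, (3) follows directly from Lemma~\ref{l.diff}, and (4) is the only substantive step.

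\emph{Conditions (1) and (2).} Each set appearing in the definition of $K$ is disjoint from $G^{\textup{core}}$: the sets $K_j(Q)$ sit inside Whitney cubes of $\R^2\setminus\bar G^{\textup{core}}$, and $\partial G^{\textup{core}}$ is disjoint from the open set $G^{\textup{core}}$. Thus $K\cap G^{\textup{core}}=\emptyset$, giving $G^{\textup{core}}\sub G$. Moreover $K$ is closed with empty interior, since any point has a neighborhood meeting at most finitely many of the sets $K_j(Q)$ (together with possibly part of $\partial G^{\textup{core}}$), each of which is closed and nowhere dense in $\R^2$. Consequently $\partial G=\R^2\setminus G=K$, and in particular $\partial G^{\textup{core}}\sub \partial G$, establishing both (1) and (2).

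\emph{Condition (3).} Fix $Q\in\mathcal{W}_j(\R^2\setminus\bar G^{\textup{core}})$. The Whitney distance estimate forces $\partial G^{\textup{core}}\cap Q=\emptyset$, so the only part of $K$ meeting $Q$ is $K_j(Q)$, and $|Q\cap G|=|Q\setminus K_j(Q)|$. Applying Lemma~\ref{l.diff} with $k=0$ together with~\eqref{e.series} gives
\[
|Q\cap K_j(Q)|\ge |Q|\Big(1-\sum_{k'=0}^\infty 2\varepsilon_{j,k'}\Big)=|Q|(1-\varepsilon_j),
\]
so $|Q\cap G|\le \varepsilon_j|Q|$. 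Summing over $Q$ and $j$ and invoking~\eqref{e.basic_whitney} yields (3).

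\emph{Condition (4).} By Proposition~\ref{t.lambda_sets_are_fat} it suffices to prove that $K=\partial G$ is an unbounded Ahlfors $2$-regular set: then $\lambda=2=n$ and $n-sp<\lambda$ holds automatically for $0<sp<n$. Unboundedness is immediate since $\mathcal{W}(\R^2\setminus\bar G^{\textup{core}})$ contains arbitrarily large cubes far from the origin, and the upper bound $\mathcal{H}^2(B(x,r)\cap K)\le Cr^2$ holds because $\mathcal{H}^2$ is a constant multiple of planar Lebesgue measure. For the lower bound, fix $x\in K$ and $r>0$ and split into two regimes. If $x\in K_j(Q)$ and $0<r\le 10\sqrt 2\cdot 2^{-j}$, Remark~\ref{r.n_set} together with~\eqref{e.series} gives $|B(x,r)\cap K_j(Q)|\gtrsim r^2$. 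Otherwise either $x\in \partial G^{\textup{core}}$ with $\dist(x,\partial G^{\textup{core}})=0$, or $x\in K_j(Q)$ with $r>10\sqrt 2\cdot 2^{-j}\ge 2\dist(x,\partial G^{\textup{core}})$ by the Whitney bound $\dist(x,\partial G^{\textup{core}})\le 5\sqrt 2\cdot 2^{-j}$; in either case $r>2\dist(x,\partial G^{\textup{core}})$, so the co-plumpness condition (D) furnishes a Whitney cube $Q'\in\mathcal{W}_{j'}(\R^2\setminus\bar G^{\textup{core}})$ with $Q'\sub B(x,r)$ and $|Q'|\ge \eta r^2$, and Lemma~\ref{l.diff} yields $|K_{j'}(Q')\cap B(x,r)|\ge (1-\varepsilon_{j'})|Q'|\gtrsim r^2$. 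The main obstacle is precisely arranging these two scale ranges so that they overlap at $r\simeq 2^{-j}$; once this is done, $2$-regularity follows and (4) is established.
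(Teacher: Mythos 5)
Your proof is correct and takes essentially the same route as the paper's: condition (3) via Lemma~\ref{l.diff} (with $k=0$) combined with \eqref{e.series} and \eqref{e.basic_whitney}, and condition (4) by showing $K=\partial G$ is an unbounded $2$-regular set through the same dichotomy the paper uses—small radii ($r\lesssim 2^{-j}$) handled by Remark~\ref{r.n_set}, large radii by the co-plumpness condition (D) together with Lemma~\ref{l.diff}—after which Proposition~\ref{t.lambda_sets_are_fat} yields uniform fatness. The only loose spot is your justification that $K$ is closed in (1)--(2): at a point of $\partial G^{\textup{core}}$ every neighborhood meets infinitely many of the sets $K_j(Q)$, so the ``finitely many'' claim fails there; closedness holds because accumulation points coming from shrinking Whitney cubes must lie on $\partial G^{\textup{core}}$, which is exactly why that boundary is included in $K$ (the paper simply records these facts as part of the construction).
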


\begin{proof}
By construction, conditions (1) and (2) are satisfied.
Let us then consider condition (3).
Observe that
$\lvert Q\cap G\rvert = \lvert Q\setminus K_j(Q)\rvert $ if $Q\in\mathcal{W}_j(\R^2\setminus \bar G^{\textup{core}})$. Hence,
by Lemma \ref{l.diff} (recall that $Q_{j,0,1}=Q_j=Q$ therein),
\begin{align*}
\sum_{j=-\infty}^\infty 2^{jsp}\sum_{Q\in\mathcal{W}_j(\R^2\setminus \bar G^{\textup{core}})}\lvert Q\cap G\rvert
&\le \sum_{j=-\infty}^\infty 2^{jsp}\sum_{Q\in\mathcal{W}_j(\R^2\setminus \bar G^{\textup{core}})}
  \lvert Q\rvert\sum_{k'=0}^\infty 2\varepsilon_{j,k'}\,.
\end{align*}
Condition (3) follows from this by using inequalities~\eqref{e.series} and~\eqref{e.basic_whitney}.
In order to verify condition~(4), it suffices to show that $\partial G=K$
is an unbounded $2$-regular set.
Indeed, Proposition~\ref{t.lambda_sets_are_fat} then implies that
$\partial G$ is $(s,p)$-uniformly fat.

For a fixed $x\in K$ and $0<r<\infty$, we have either (a) or (b) below:

{\bf (a)}: $\dist(x,\partial G^{\textup{core}}) <  r/2$.
By co-plumpness, condition (D), there is $Q\in \mathcal{W}(\R^2\setminus \bar G^{\textup{core}})$ such that
$\lvert Q\rvert \ge \eta r^2$ and $Q\subset B(x,r)$. By Lemma \ref{l.diff}
and inequality \eqref{e.series},
\[
\lvert B(x,r)\cap K\rvert \ge \lvert K_j(Q)\rvert \ge r^2 \eta/2\,.
\]

{\bf (b)}: $r/2\le \dist(x,\partial G^{\textup{core}})$.
In this case, $x\in K_j(Q)\subset Q$ for $Q\in\mathcal{W}_j(\R^2\setminus \bar G^{\textup{core}})$,
$j\in \Z$.
Thus, by condition (C) and properties of Whitney cubes,
\[
r/2\le \dist(x,\partial (\R^2\setminus \bar G^{\textup{core}})) \le 5\diam(Q)=5\sqrt 2\cdot 2^{-j}\,.
\]
As a conclusion $r \le 10\sqrt 2\cdot 2^{-j}$, and so, by Remark \ref{r.n_set} and inequality \eqref{e.series},
\[
\lvert B(x,r)\cap K\rvert \ge \lvert B(x,r)\cap K_j(Q)\rvert \ge r^2(10\sqrt 2)^{-2}/2\,.
\]
This concludes the proof of condition (4).
\end{proof}

\subsection{A local construction}\label{s.local halfspace}

A drawback in the open set $G=\R^2\setminus K$ of the previous construction is that
$G$ is not connected.
In the following Subsections~\ref{s.local cantor} and~\ref{s.local sf}, we indicate alternative counterexamples where
$G$ is indeed a domain, thus showing that the non-connectivity is not an issue in 
the failure of the $(s,p)$-Hardy inequalities in the construction of Section~\ref{s.outline}.
The examples in Sections~\ref{s.local cantor} and~\ref{s.local sf} are based on the use of `localized' test functions
constructed in this subsection. Again, the examples are given only for the planar case, but
similar constructions can be carried out, for $0<sp\le 1$
(corresponding to $n-1\le \lambda <n$), in higher dimensions as well.

For $1\le \lambda <2$, let $K_\lambda\sub\{(x_1,x_2)\in\R^2 : x_2\ge 0\}$ be a $\lambda$-dimensional 
von Koch -type snowflake curve joining
the points $(-3,0)$ and $(3,0)$ in the plane.
For instance, when $\lambda=1$, we simply have the interval
from $(-3,0)$ to $(3,0)$, and  
the von Koch snowflake curve in Figure~1 
is obtained by joining together three copies of the curve $K_\lambda$, with $\lambda =\log 4/\log 3$, 
to create a closed curve.

Let $G\sub\R^2$ be any domain such that 
\[
K_\lambda = \bdry G \cap \big([-3,3]\times[0,3]\big)
\]
and $[-3,3]\times[0,-1] \sub G^c$; note that by the connectivity of $G$, then also the set bounded
by $K_\lambda$ and the interval from $(-3,0)$ to $(3,0)$ belongs to $G^c$. 
We demonstrate below that such a domain $G$ can not admit $(s,p)$-Hardy inequalities when $sp=2-\lambda$.
This is not yet enough to produce the desired counterexamples,
but in Sections~\ref{s.local cantor} and~\ref{s.local sf} we explain how to modify these domains
so that we obtain a proof for the second claim in Theorem~\ref{t.counter}.
Nevertheless, the construction below has also independent interest. 
In particular, when $G$ is chosen to be the upper half-space, with $\lambda = 1$,
we see that the case (T3) of Theorem~1.1 in~\cite{Dyda} is sharp:
$G$ is a domain above the graph of a Lipschitz function $\R^{n-1}\to\R$,
and $G$ does not admit $(s,p)$-Hardy inequalities when $sp=1$.

For $j,m\in\N$, we define
\begin{align*}
N_j&=\bigg\{x\in [-1,1]\times [0,2]\,:\, 2^{-j}\le \dist(x,\bdry G) < 2^{-j+1}\bigg\},\\
A_m&=\bigg\{x\in [-1,1]\times [0,2]\,:\, 2^{-m}<\dist(x,\bdry G) \le 1\bigg\} = \bigcup_{j=1}^{m} N_j,\\
E_m&=\bigg\{x\in [-2,2] \times [0,2]\,:\,0<\dist(x,\bdry G) \le 2^{-m}\bigg\}.
\end{align*}
We choose functions $u_m\in C^\infty_0(G)$ such that:
\begin{itemize}
\item[(1)] $u_m=1$ in $A_m$;

\item[(2)] $u_m=0$ outside $[-2,2]\times[0,2]$;

\item[(3)] $0\le u_m\le 1$ everywhere;

\item[(4)] $|\nabla u_m|\le c2^{m}$ everywhere and $|\nabla u_m|\le c$ in $G\setminus E_m$.
\end{itemize}
Let $1<p<\infty$ and $0<s<1$ be such that $sp=2-\lambda$.
Since  $u_m=1$ in $A_m$ and $|N_j|\simeq 2^{j(\lambda-2)}$ 
by the $\lambda$-regularity of $K_\lambda$, it follows that
\begin{equation}\label{eq:lhs=m}
\int_G \frac{\lvert u_m(x)\rvert^p}{\dist(x,\partial G)^{sp}}\,dx\ge
\sum_{j=1}^m\int_{N_j} \frac{1}{\dist(x,\partial G)^{sp}}\,dx
\simeq \sum_{j=1}^m 2^{j(\lambda-2)} 2^{jsp} = m.
\end{equation}

Hence, it suffices to show that the right-hand side of the $(s,p)$-Hardy inequality
remains uniformly bounded in $m$. 
We do this using arguments similar to those in Dyda's example~\cite{Dyda}, but due to the
local nature of our example, we have additional terms to estimate in our calculations.
First, we observe that
\begin{align*}
\int_G \int_G & \frac{\lvert u_m(x)-u_m(y)\rvert^p}{\lvert x-y\rvert^{2+sp}} \,dx\,dy\\
&\le \bigg\{2\int_{G}\int_{E_m}+\int_{G\setminus E_m}\int_{G\setminus E_m}\bigg\}\frac{\lvert u_m(x)-u_m(y)\rvert^p}
 {\lvert x-y\rvert^{2+sp}} \,dx\,dy  =: 2I_1 + I_2.
\end{align*}
By the $\lambda$-regularity of the snowflake curves,
it is easy to see that $E_m\subset \bigcup_{k=1}^{N_m} B_k$, where $B_k=B(x_k,2^{-m+1})$
for some $x_k\in K_\lambda$,
and $N=N_m\le c 2^{\lambda m}$.
Thus, we obtain
\begin{align*}
I_1 & 
  \le c\sum_{k=1}^N \int_{G}\int_{B_k}\frac{\lvert u_m(x)-u_m(y)\rvert^p}{\lvert x-y\rvert^{2+sp}} \,dx\,dy \\
 &\le c\sum_{\ell=0}^{\infty}\sum_{k=1}^N \int_{(\ell+1)B_k\setminus \ell B_k}\int_{B_k}
           \frac{\lvert u_m(x)-u_m(y)\rvert^p}{\lvert x-y\rvert^{2+sp}} \,dx\,dy.
\end{align*}
Using property~(4) of the functions $u_m$, the terms for $\ell=0$ and $\ell=1$ in the above sum are bounded by 
\begin{align*}
\sum_{k=1}^N \int_{2B_k}\int_{B_k} & \frac{2^{mp}|x-y|^p}{\lvert x-y\rvert^{2+sp}} \,dx\,dy 
 \le \sum_{k=1}^N \int_{2B_k}\int_{B(y,2^{-m+3})} 2^{mp}{\lvert x-y\rvert^{p-2-sp}} \,dx\,dy \\ 
& \le c \sum_{k=1}^N |B_k| 2^{mp} 2^{-m(p-sp)} 
  \le c 2^{\lambda m} 2^{-2m}  2^{mp} 2^{-m(p-sp)}\le c;
\end{align*}
recall here that $sp=2-\lambda$. For $\ell=2,3,\dots$, we have
\begin{align*}
\sum_{k=1}^N & \int_{(\ell+1)B_k\setminus \ell B_k}\int_{B_k} 
           \frac{\lvert u_m(x)-u_m(y)\rvert^p}{\lvert x-y\rvert^{2+sp}} \,dx\,dy\\  
   & \le  c \sum_{k=1}^N |(\ell+1)B_k\setminus \ell B_k|\cdot|B_k|\frac{1}{(\ell 2^{-m})^{2+sp}}\\
   & \le  c 2^{\lambda m} (\ell 2^{-m})2^{-m} |B_k| 2^{m(2+sp)} \frac{1}{\ell^{2+sp}}
     \le c  2^{m(\lambda-2-2+2+sp)}\ell^{-(1+sp)} \le c\,\ell^{-(1+sp)}. 
\end{align*}
Since $sp>0$, it follows that
\[
I_1\le c \sum_{\ell=0}^{\infty}\sum_{k=1}^N \int_{(\ell+1)B_k\setminus \ell B_k}\int_{B_k}
           \frac{\lvert u_m(x)-u_m(y)\rvert^p}{\lvert x-y\rvert^{2+sp}} \,dx\,dy
   \le c + \sum_{\ell=2}^{\infty} \ell^{-(1+sp)} \le c.
\]

Let us then estimate the integral $I_2$. We write
\[
Q_m^2=\big(([-2,2]\times [0,2])\setminus E_m\big)\cap G\quad\text{ and }\quad 
Q_m^3=\big(([-3,3]\times [0,3])\setminus E_m\big)\cap G.
\]
Then, with simple manipulations and the fact that  $u_m=0$  outside  of the set $[-2,2]\times[0,2]$, we find
\[\begin{split}
I_2 & = \int_{G\setminus E_m}\int_{G\setminus E_m} \frac{\lvert u_m(x)-u_m(y)\rvert^p}{\lvert x-y\rvert^{2+sp}} \,dx\,dy\\
 & \le \bigg\{\int_{Q^3_m}\int_{Q^2_m} + 2 \int_{Q^2_m}\int_{G\setminus (Q^3_m\cup E_m)}\bigg\}
     \frac{\lvert u_m(x)-u_m(y)\rvert^p}{\lvert x-y\rvert^{2+sp}} \,dx\,dy
 =: I_{21} + 2I_{22}.
\end{split}\]
By the properties of snowflake curves,
the set $Q^3_m$ is quasiconvex; that is, all pairs $x,y\in Q^3_m$ can be joined by a curve $\gamma_{x,y}\sub Q^3_m$
such that $\length(\gamma)\le c|x-y|$. Thus $|u_m(x)-u_m(y)|\le c |x-y|$ for all $x,y\in Q^3_m$
by property~(4) of the functions $u_m$, and so
\[
I_{21} \le c \int_{Q^3_m}\int_{Q^2_m}\frac{\lvert x-y \rvert^p}{\lvert x-y\rvert^{2+sp}} \,dx\,dy
\le c \int_{Q^3_m}\int_{B(y,10)}\lvert x-y \rvert^{p-2-sp}\,dx\,dy\le c.
\]
Finally,
\begin{equation}\label{eq:I22}
\begin{split}
I_{22} & = \int_{Q^2_m}\int_{G\setminus (Q^3_m\cup E_m)}
   \frac{\lvert u_m(x)-u_m(y)\rvert^p}{\lvert x-y\rvert^{2+sp}}\,dx\,dy\\
 & \le\int_{Q^2_m}\int_{G\setminus B(y,1)}\frac{1}{\lvert x-y\rvert^{2+sp}} \,dx\,dy \le c,
\end{split}
\end{equation}
where we used also the fact that $[-3,3]\times[0,-1] \sub G^c$.
By the above calculations we conclude that, indeed,
\[
\int_G \int_G \frac{\lvert u_m(x)-u_m(y)\rvert^p}{\lvert x-y\rvert^{2+sp}} \,dx\,dy \le C,
\]
where the constant $C>0$ is independent of $m$. Consequently, $G$ can not admit
$(s,p)$-Hardy inequalities for $sp=2-\lambda$.

\subsection{Local counterexample for $sp=1$}\label{s.local cantor}

Let us now consider a domain $G\sub \R^2$ such that $G=G^{\textup{core}}\setminus K$,
where $G^{\textup{core}}$ is a snowflake domain with $\dim_\Ha(\bdry G^{\textup{core}})=\lambda\in(1,2)$
(cf.\ Section~\ref{s.sel}) and $K$ is a fat Cantor set (cf.\ Section~\ref{s.aux_compact}) placed inside
$G^{\textup{core}}$ in such a way that $\dist(K,\bdry G^{\textup{core}})\ge c_0>0$.
Then the boundary of $G$ is locally $(s,p)$-uniformly fat whenever $sp>2-\lambda$,
and so in particular for $sp=1$, but we will show that
$G$ can not admit $(s,p)$-Hardy inequalities when $sp=1$.

For simplicity, we may assume that $K=K(Q_0)$ with $Q_0=[0,1]\times [0,1]$, as constructed in
Section~\ref{s.sel}, and that $\dist(K,\bdry G^{\textup{core}})\ge 1$. For each $k\ge 0$, let $Q_{k}=Q_{0,k,m}$
be the $k$th level cube in the construction of $K$ (with side length $\ell_k=\ell_{0,k}$)
satisfying $(0,1)\in Q_{k}$. The idea is to place 
above each $Q_{k}$ a scaled copy of a suitable function $u_m$, $m=m(k)$, from Section~\ref{s.local halfspace}
with $\lambda = 1$.
More precisely, the cube $[0,\ell_{k}]\times[1,1+\ell_{k}/2]$ corresponds now to the cube $[-3,3]\times [0,3]$\,;
the line segment from $(0,1)$ to $(0+\ell_k,1)$ corresponds to $K_1$ of Section~\ref{s.local halfspace}; 
and the functions $u_m$ are scaled accordingly.
If $2^{-m}\ge \varepsilon_{k}$, where $\varepsilon_{k}=\varepsilon_{0,k}$ gives the relative width of
the `corridor' in the $(k+1)$th level of the construction of $K$, then estimate~\eqref{eq:lhs=m}, when applied to these
scaled copies of $u_m$ in $[0,\ell_{k}]\times[1,1+\ell_{k}/2]$, yields
\begin{equation}\label{eq:lhs=m*}
\int_G \frac{\lvert u_m(x)\rvert^p}{\dist(x,\partial G)^{sp}}\,dx\ge c \ell_k^{2-sp} m.
\end{equation}
Notice, however, that for a fixed $k$ this estimate is not true when $m\to\infty$, since then also the
small corridors in $Q_k\setminus K$ influence the factor $\dist(x,\bdry G)$ in the integral. 

The terms on the right-hand side of the $(s,p)$-Hardy inequality are scaled with the
factor $\ell_k^{2-sp}$, too. Estimates for the integrals 
corresponding to $I_1$ and $I_{21}$ from Section~\ref{s.local halfspace}
are valid as such, and thus these terms are bounded by $c \ell_k^{2-sp}$. However, in estimate~\eqref{eq:I22}
for the integral $I_{22}$ we used the fact that $[-3,3]\times[0,-1] \sub G^c$, but the
corresponding claim is clearly not true in the present setting; in fact, $Q_k\cap G$ is even a dense set.
To overcome this, we need to be a bit more careful when estimating this last integral.

We choose $\varepsilon_k=2^{-k(2+sp)}/4$, and thus $\sum_{k'=k}^\infty 2\varepsilon_{k'}\simeq \varepsilon_k$ for all $k\ge 0$.
By Lemma~\ref{l.diff}, we know that then
\begin{equation}\label{eq:small area}
|Q_k\cap G|\le |Q_k|\sum_{k'=k}^\infty 2\varepsilon_{k'} \simeq \ell_k^2 \varepsilon_{k} \simeq \ell_k^2 2^{-k(2+sp)}.
\end{equation}
Let $\hat Q^2_m,\hat Q^3_m\sub[0,\ell_{k}]\times[1,1+\ell_{k}/2]$ 
denote the obvious analogs of $Q^2_m$ and $Q^3_m$.
Then we can estimate the integral corresponding to $I_{22}$ as
\[\begin{split}
\int_{\hat Q^2_m} & \int_{G\setminus \hat Q^3_m}
   \frac{\lvert u_m(x)-u_m(y)\rvert^p}{\lvert x-y\rvert^{2+sp}}\,dx\,dy\\
& \le\int_{\hat Q^2_m}\int_{G\setminus B(y,\ell_k/6)}\frac{1}{\lvert x-y\rvert^{2+sp}} \,dx\,dy
    + \int_{\hat Q^2_m}\int_{Q_k\cap G}\frac{1}{\lvert x-y\rvert^{2+sp}} \,dx\,dy\\
& \le c \ell_k^{2-sp} + |Q_k||Q_k\cap G|2^{m(2+sp)}\ell_k^{-(2+sp)} \\
& \le c \ell_k^{2-sp} + \ell_k^{2-sp} 2^{-k(2+sp)} 2^{m(2+sp)},
\end{split}\]
where we used~\eqref{eq:small area} and the fact that $|x-y|\ge \ell_k 2^{-m}$ 
when $y\in \hat Q^2_m$ and $x\in Q_k\cap G$. Thus, for $m=m(k)=k$, 
the integral above is bounded by $\ell_k^{2-sp}$, as desired. Moreover,
with this choice we have $2^{-m(k)}=2^{-k}\ge 2^{-k(2+sp)} \ge \varepsilon_k$, which
was the only requirement for estimate~\eqref{eq:lhs=m*} to hold.
In conclusion, the $(s,p)$-Hardy inequality can not be valid for functions $u_{m(k)}$, $m(k)=k$,
with a uniform constant, and hence the domain $G$ can not admit $(s,p)$-Hardy inequalities
when $sp=1$.

\subsection{Local snowflaked counterexamples for $0<sp<1$}\label{s.local sf}

Let us briefly indicate how to obtain similar counterexamples when $0<sp<1$.
The value $sp=1$ above corresponds to the fact that the fat Cantor set is,
loosely speaking, seen as one-dimensional when observed from a suitable distance from
within the domain; here $1=2-sp$. Similarly, if we had a locally $(s,p)$-uniformly 
fat part of the boundary that is seen as $\lambda$-dimensional,
with $\lambda = 2 -sp$, 
then $(s,p)$-Hardy inequalities would fail. Such a situation can be obtained, for instance,
by placing a snowflake type set $F_\lambda$, indicated in 
Figure~3,
inside a suitable $G^{\textup{core}}$-domain. Notice, however, that the
set in Figure~3 is based on a very simple approximation of the
actual snowflake curve $K_\lambda$.

\begin{figure}[!ht]
\begin{center}
\includegraphics[width=12cm]{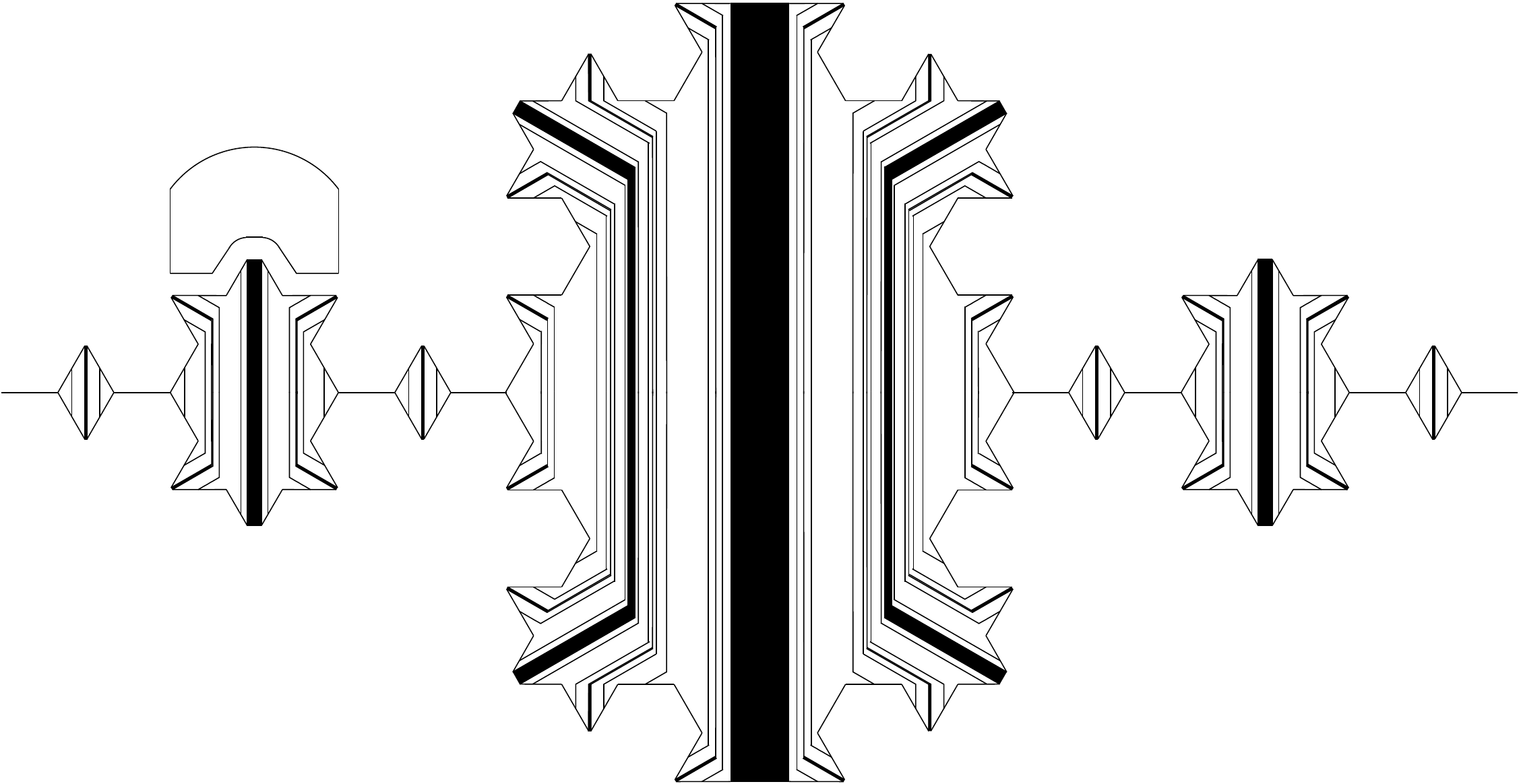}
\end{center}
\caption{Snowflake set $F_\lambda$ with tunnels. The support of the test function $u_2$
is drawn above the snowflake rhombus $Q_2$.}
\end{figure}

More precisely, to obtain such a set $F_\lambda$, 
we first consider the set bounded by a snowflake curve $K_\lambda$ of dimension $\lambda\in (1,2)$ and its
reflection on the $x_1$-axis, and then remove `tunnels' 
(the black parts in Figure~3)
of width $\varepsilon_k$: one large tunnel of width $\varepsilon_1$, four tunnels of width $\varepsilon_2$, etc.
It is clear that if we choose $\varepsilon_k>0$ to be small enough, then the resulting set $F_\lambda$
is still $2$-regular, and thus locally $(s,p)$-uniformly fat for any $1<p<\infty$ and $0<s<1$. 
If the boundary of the
surrounding $G^{\textup{core}}$-domain has dimension greater than $\lambda$, 
it follows that the boundary
of $G=G^{\textup{core}}\setminus F_\lambda$ is locally $(s,p)$-uniformly fat for $sp=2-\lambda$.

Now, let $Q_1\sub \partial G$ denote the large `snowflake rhombus' in the middle of $K_\lambda$.
Then choose $Q_2$ to be the second largest rhombus on the
left-hand side of $Q_1$; 
and $Q_3$ to be the third largest rhombus on the
left-hand side of $Q_2$; 
and so on. Just like in the counterexample 
of Section~\ref{s.local cantor} with the fat Cantor set, we place scaled copies of the functions $u_m$ from 
Section~\ref{s.local halfspace} (constructed with respect to the corresponding $K_\lambda$) above the
sets $Q_k$; cf.~Figure~3. With computations analogous to those in Section~\ref{s.local cantor} 
(we leave the details 
to the reader), it follows that, for $sp=2-\lambda$, $G$ can not admit 
$(s,p)$-Hardy inequalities, and yet $G$ is a domain
having a locally $(s,p)$-uniformly fat boundary. This finishes the proof of
the second claim of Theorem~\ref{t.counter}.

\section{Applications}\label{s.applications}

We conclude the paper with extension and removability problems related to
fractional Hardy inequalities and fractional Sobolev spaces. 
For $0<s<1$ and  $1< p <\infty$, the \emph{fractional Sobolev space} $W^{s,p}(G)$ 
is defined as the space of functions $u\in L^p({G})$ with
$\lVert u\rVert_{W^{s,p}({G})}:=\lVert u\rVert_{L^p({G})}+|u|_{W^{s,p}({G})}<\infty$,
where
\begin{equation}\label{d.frac_def}
|u|_{W^{s,p}({G})} := \bigg(\int_{G} \int_{G}
\frac{|u(x)-u(y)|^p}{|x-y|^{n+sp}}\,dy\,dx\bigg)^{1/p}.
\end{equation}
See \cite{HH} for a nice introduction and basic properties of the spaces $W^{s,p}(G)$.

\subsection{Zero Extension}
We recall here a straightforward but useful connection between
boundedness of certain extension operators and Hardy inequalities~\cite{ihnatsyeva2}. 
For an open set $G\subset\R^n$, we define the
`zero extension operator' $E_G$ as
\begin{equation}\label{zero_ext_def}
E_G\colon L^{p}(G)\to L^{p}(\R^n)\,,\quad E_G u(x)
=\begin{cases} u(x)\,,\qquad &x\in G\,;\\ 0\,,\qquad &x\in \R^n\setminus G\,.\end{cases}
\end{equation}
We say that the operator $E_G$ is 
bounded, 
if there is a constant $c_1>0$ such that
\[
\lvert E_G u\rvert_{W^{s,p}(\R^n)}^p
\le c_1 \lvert u\rvert_{W^{s,p}(G)}^p
\]
for every $u\in C^\infty_0(G)$.

\begin{lemma}\label{l.chi_bounded}
Let $0<s<1$ and $1<p<\infty$, and suppose that $G\subset\R^n$ is an open set.
Then the zero extension $E_G u$ of any $u\in W^{s,p}(G)$ satisfies the inequality
\[
\lvert E_G u \rvert_{W^{s,p}(\R^n)}^p
\le \lvert u\rvert_{W^{s,p}(G)}^p + C \int_{{G}}
\frac{\lvert u(x)\rvert^p}{ \mathrm{dist}(x,\partial G)^{sp}}\,dx\,,
\]
where the constant $C$ depends on $n$, $s$, and $p$.
\end{lemma}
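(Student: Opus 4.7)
The plan is to split the double integral defining $\lvert E_Gu\rvert_{W^{s,p}(\R^n)}^p$ according to which of the four sets $G\times G$, $G\times G^c$, $G^c\times G$, $G^c\times G^c$ the pair $(x,y)$ belongs to. On $G\times G$ the extension $E_Gu$ coincides with $u$, so this piece contributes exactly $\lvert u\rvert_{W^{s,p}(G)}^p$. On $G^c\times G^c$ the integrand vanishes since $E_Gu\equiv 0$ there. By symmetry of the integrand, the remaining two mixed pieces contribute the same amount, and since $E_Gu(y)=0$ for $y\in G^c$, each equals
\[
\int_{G}\int_{G^c}\frac{\lvert u(x)\rvert^p}{\lvert x-y\rvert^{n+sp}}\,dy\,dx.
\]

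The main step is then a pointwise estimate for the inner integral. For fixed $x\in G$, the complement $G^c$ is contained in $\R^n\setminus B\bigl(x,\dist(x,\partial G)\bigr)$. Hence, integrating in polar coordinates,
\[
\int_{G^c}\frac{1}{\lvert x-y\rvert^{n+sp}}\,dy \;\le\; \int_{\R^n\setminus B(x,\dist(x,\partial G))}\frac{1}{\lvert x-y\rvert^{n+sp}}\,dy \;=\; \frac{C(n,s,p)}{\dist(x,\partial G)^{sp}},
\]
where the constant comes from $\int_r^\infty t^{-n-sp}\cdot t^{n-1}\,dt=(sp)^{-1}r^{-sp}$ and the surface area of the unit sphere.

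Multiplying by $\lvert u(x)\rvert^p$ and integrating over $G$ bounds each mixed piece by $C\int_G\lvert u(x)\rvert^p\dist(x,\partial G)^{-sp}\,dx$. Summing the four contributions and absorbing the factor $2$ into the constant $C$ yields the claimed inequality. There is no essential obstacle here; the argument is purely a decomposition plus a radial tail bound, and it works for every $u\in W^{s,p}(G)$ once the right-hand side is finite (no smoothness or compact support is needed).
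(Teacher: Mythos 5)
Your proposal is correct and follows essentially the same route as the paper: decompose the double integral over $\R^n\times\R^n$, note that the $G\times G$ part gives $\lvert u\rvert_{W^{s,p}(G)}^p$ and the mixed parts reduce to $\int_G \lvert u(x)\rvert^p \int_{G^c} \lvert x-y\rvert^{-n-sp}\,dy\,dx$, and then bound the inner integral by integrating over $\R^n\setminus B(x,\dist(x,\partial G))$ in polar coordinates. This is exactly the paper's argument, including the constant $2$ absorbed into $C$.
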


\begin{proof}
Since
\begin{align*}
\lvert E_G u \rvert_{W^{s,p}(\R^n)}^p &=
\int_{\R^n}\int_{\R^n}\frac{\lvert E_Gu(x) - E_Gu(y)\rvert^p}
{\lvert x-y\rvert^{n+sp}}\,dy\,dx\\
&\le \lvert u\rvert_{W^{s,p}(G)}^p + 2 \int_{{G}}
\lvert u(x)\rvert^p \int_{G^c} \frac{1}{\lvert x-y\rvert^{n+sp}}\,dy\,dx\,,
\end{align*}
it remains to apply the following estimates, which are valid
for $x\in{G}$:
\begin{equation}\label{e.integral}
\begin{split}
\int_{G^c} \frac{1}{\lvert x-y\rvert^{n+sp}}\,dy
&\le \int_{\R^n\setminus B(x,\mathrm{dist}(x,\partial{G}))} \frac{1}{\lvert x-y\rvert^{n+sp}}\,dy\\
& \le C \int_{\mathrm{dist}(x,\partial G)}^\infty r^{-1-sp}\,dr \le C
\mathrm{dist}(x,\partial{G})^{-sp}\,.
\end{split}
\end{equation}
This completes the proof of the lemma.
\end{proof}

We also need the following auxiliary Hardy-type inequality, an
adaptation of results in~\cite{E-HSV} and~\cite[Theorem~3.1]{ihnatsyeva1}.

\begin{theorem}\label{fractional_hardy_fat}
Let $1<p<\infty$  and $0<s<1$ be such that $sp<n$.
Let $G\subset\R^n $ be an open set, $n\geq 2$, whose
complement is $(s,p)$-uniformly fat.
Then, for every $u\in C^\infty_0(G)$,
\[
\int_{G}\frac{\vert u(x)\vert^p}{\dist (x,\partial G)^{sp} }\,dx
\le c_0\int_{\R^n}\int_{\R^n}\frac{\vert E_G u(x)-E_G u(y)\vert^p}{\vert x-y\vert ^{n+sp} }\,dy\,dx\,,
\]
where 
the constant $c_0$ depends on $s$, $n$, $p$, and the fatness constant $\sigma$.
\end{theorem}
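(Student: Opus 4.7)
The plan is to run the same Whitney-decomposition scheme used in the proof of Theorem~\ref{thm:fractional_hardy_visible}, with the $(s,p)$-uniform fatness of $G^c$ playing the role of the visual-boundary condition \eqref{eq:main cond}. After extending $u \in C^\infty_0(G)$ by zero to $\R^n$, property \eqref{dist_est} reduces the left-hand side to
\[
\int_{G}\frac{\lvert u(x)\rvert^p}{\dist(x,\partial G)^{sp}}\,dx \le C \sum_{Q \in \mathcal{W}(G)} \ell(Q)^{-sp} \int_Q \lvert u(x)\rvert^p\,dx.
\]

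For each Whitney cube $Q$ I would fix a point $x_Q^\ast \in \partial G$ with $\lvert x_Q^\ast - x_Q\rvert \le 5\diam(Q)$ and set $B_Q = B(x_Q^\ast, L\ell(Q))$ for a dimensional constant $L$ so that $Q \subset B_Q$ and $\diam(B_Q) \simeq \ell(Q)$. By the self-improvement of uniform fatness (Theorem~\ref{lewis}) I would select $1 < q < p$ for which $G^c$ is still $(s,q)$-uniformly fat; Proposition~\ref{prop:fatness implies density} then produces an exponent $\lambda > n - sq > n - sp$ so that $\Ha^\lambda_\infty(B_Q \cap G^c) \ge c\,\ell(Q)^\lambda$. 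The target at this stage is the local estimate
\[
\int_Q \lvert u(x)\rvert^p\,dx \le C\,\lvert Q\rvert^{1 + p\beta/n - p/q}\bigg(\int_{B_Q}\int_{B_Q} \frac{\lvert E_G u(x) - E_G u(y)\rvert^q}{\lvert x - y\rvert^{n + \beta q}}\,dy\,dx\bigg)^{p/q},
\]
where $\beta = n(1/p - 1/q) + s$ is chosen exactly as in the proof of Theorem~\ref{thm:fractional_hardy_visible}, so that $n - \lambda < \beta q < sp$ and $0 < 1/q - 1/p < \beta/n$. The key ingredient is an analogue of Lemma~\ref{lemma:Hausdorff estimate}, or more precisely of Lemma~\ref{lemma:Hausdorff estimate no acces}, in which $\lvert u_Q\rvert^q$ is controlled by a double integral on $B_Q$ via a chain of shrinking dyadic subcubes reaching the $\lambda$-content-thick piece $B_Q \cap G^c$ on which $E_G u$ vanishes. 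Combining this bound on $\lvert u_Q\rvert$ with the fractional $(p,q,\beta)$-Poincar\'e inequality (Lemma~\ref{lemma:poincare_cube}) applied to the oscillation $\int_Q \lvert u - u_Q\rvert^p$ then yields the local estimate above.

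Finally, I would sum the local estimates over $Q \in \mathcal{W}(G)$ and invoke Lemma~\ref{carleson_lemma} with $\kappa = p/q > 1$, exactly as in the last step of the proof of Theorem~\ref{thm:fractional_hardy_visible}. The identity $p(n+\beta q)/q = n + sp$ restores the correct kernel exponent and produces the asserted inequality, with the constant depending only on $s$, $n$, $p$ and the fatness constant $\sigma$ (the latter enters through Theorem~\ref{lewis} and the choice of $q$, $\lambda$).

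The main obstacle is the adaptation of the Hausdorff-content estimate of Lemma~\ref{lemma:Hausdorff estimate} to the present setting, where the relevant thick set is $B_Q \cap G^c$ rather than a visual boundary portion. Because no John-type accessibility of $\partial G$ is assumed, the John chain $C(Q,w)$ of Lemma~\ref{lemma:Hausdorff estimate} must be replaced by a chain of shrinking cubes in $B_Q$ converging to points of $B_Q \cap G^c$, in the spirit of the chain $\widetilde{C}(Q,w)$ of Lemma~\ref{lemma:Hausdorff estimate no acces}; the endpoint value $u(w) = 0$ now comes from the zero extension together with the fatness of $G^c$ at $x_Q^\ast$. Once this adaptation is carried out, the remaining steps are routine variants of the computations already worked out in Section~\ref{s.proof_main}.
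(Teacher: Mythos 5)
Your proposal is sound, but note that the paper does not actually prove Theorem~\ref{fractional_hardy_fat} at all: it is quoted as an adaptation of results from~\cite{E-HSV} and~\cite[Theorem~3.1]{ihnatsyeva1}, which rest on maximal-function/Triebel--Lizorkin arguments. What you propose is therefore a genuinely different, self-contained route that recycles the paper's own Section~\ref{s.proof_main} machinery: Proposition~\ref{prop:fatness implies density} converts $(s,p)$-fatness of $G^c$ into a Hausdorff-content density $\Ha^\lambda_\infty(B(x_Q^\ast,L\ell(Q))\cap G^c)\gtrsim \ell(Q)^\lambda$ with some $\lambda>n-sp$; a chain of shrinking cubes from $Q$ to a point $w\in B_Q\cap G^c$ (as in Lemma~\ref{lemma:Hausdorff estimate no acces}) controls $|u_Q|$, the crucial point being that $E_Gu\in C^\infty_0(\R^n)$ vanishes identically on $G^c$, so no visibility and no zero-measure hypothesis on $G^c\cap LQ$ are needed once the double integral is taken of $E_Gu$ over full cubes; and the summation over Whitney cubes goes through Lemma~\ref{carleson_lemma} with $\kappa=p/q$ and the identity $p(n+\beta q)/q=n+sp$. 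This buys a proof entirely inside the paper's toolkit (and explains transparently why the global inequality needs no geometric assumption beyond fatness), at the cost of redoing a chaining lemma that the cited references make unnecessary.

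One bookkeeping point needs fixing in your exponent choices: you should decouple the exponent coming from the self-improvement of fatness from the exponent $q$ used in the Poincar\'e/chaining step. If the same $q$ were used for both, so that $\lambda=n-sq$, then $\beta=n(1/p-1/q)+s<s$ gives $\beta q<sq=n-\lambda$, and the required constraint $n-\lambda<\beta q$ fails. The correct order is the one implicit in the proof of Theorem~\ref{thm:fractional_hardy_visible}: first apply Proposition~\ref{prop:fatness implies density} to the $(s,p)$-fat set $G^c$ to fix some $\lambda>n-sp$, and only then choose the chaining exponent $q\in(1,p)$ close enough to $p$ (depending on $\lambda$) so that $n-\lambda<\beta q<sp$ and $0<1/q-1/p<\beta/n$. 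With that adjustment, and with the endpoint limit $\lim_i (E_Gu)_{Q_i}=E_Gu(w)=0$ justified simply by continuity of the extended function, your argument goes through and yields the stated constant dependence on $s$, $n$, $p$, and $\sigma$.
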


With the help of the above results, we establish a connection between
extensions and Hardy inequalities:

\begin{theorem}\label{equiv}
Let $1<p<\infty$ and $0<s<1$ be such that $sp<n$.
Let $G\subset\R^n$, $n\geq 2$, be an open set.
Then the following two statements hold:
\begin{itemize}
\item[(1)]
If $G^c$ is $(s,p)$-uniformly fat and $E_G$ is bounded, then $G$ admits an $(s,p)$-Hardy inequality.
\item[(2)]
Conversely, suppose that $G$ admits an $(s,p)$-Hardy inequality. 
Then $E_G$ is bounded.
\end{itemize}
\end{theorem}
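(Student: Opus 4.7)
Both parts of Theorem~\ref{equiv} follow by combining the two auxiliary results already at hand, namely Lemma~\ref{l.chi_bounded} (which controls the norm of the zero extension by the intrinsic seminorm plus a Hardy-type integral) and Theorem~\ref{fractional_hardy_fat} (which gives a Hardy inequality on $G$ from the uniform fatness of $G^c$, but with the right-hand side taken over $\R^n\times\R^n$). The essential observation is that these two inequalities point in opposite directions: one turns the $\R^n$-seminorm into a Hardy integral plus the $G$-seminorm, and the other turns the Hardy integral into an $\R^n$-seminorm via the extension. Chaining them in the appropriate order proves each implication.

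For part~(1), I would fix $u\in C_0^\infty(G)$ and apply Theorem~\ref{fractional_hardy_fat} to bound the Hardy integral on $G$ by $c_0\lvert E_Gu\rvert_{W^{s,p}(\R^n)}^p$. The assumed boundedness of $E_G$ then immediately gives $c_0 c_1\lvert u\rvert_{W^{s,p}(G)}^p$, which is the desired $(s,p)$-Hardy inequality with constant $c_0 c_1$.

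For part~(2), I would again fix $u\in C_0^\infty(G)$ and apply Lemma~\ref{l.chi_bounded} to write
\[
\lvert E_Gu\rvert_{W^{s,p}(\R^n)}^p \le \lvert u\rvert_{W^{s,p}(G)}^p + C\int_G \frac{\lvert u(x)\rvert^p}{\dist(x,\partial G)^{sp}}\,dx.
\]
The hypothesized $(s,p)$-Hardy inequality on $G$ bounds the second term by $Cc\lvert u\rvert_{W^{s,p}(G)}^p$, yielding $\lvert E_Gu\rvert_{W^{s,p}(\R^n)}^p \le C' \lvert u\rvert_{W^{s,p}(G)}^p$. This establishes the boundedness of $E_G$ on $C_0^\infty(G)$, and hence (by the usual density/Fatou argument, or by the definition in the excerpt, which only demands the inequality on $C_0^\infty(G)$) the required extension bound.

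There is no real obstacle here — the main content is already encoded in Lemma~\ref{l.chi_bounded} and Theorem~\ref{fractional_hardy_fat}; the proof is a two-line bookkeeping on each side. The only point worth flagging is that part~(2) does not need any fatness assumption on $G^c$: Lemma~\ref{l.chi_bounded} is valid for arbitrary open sets, so the Hardy inequality alone suffices to control the nonlocal part of the $W^{s,p}(\R^n)$-seminorm of $E_Gu$.
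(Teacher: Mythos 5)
Your proposal is correct and matches the paper's proof exactly: part (1) chains Theorem~\ref{fractional_hardy_fat} with the assumed boundedness of $E_G$, and part (2) chains Lemma~\ref{l.chi_bounded} with the assumed Hardy inequality, just as the paper does. Your closing remark that part (2) needs no fatness hypothesis is consistent with the statement and requires no further argument.
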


\begin{proof}
First suppose that the assumptions in (1) hold. By Theorem \ref{fractional_hardy_fat} we have 
for every $u\in C^\infty_0(G)$ that
\begin{align*}
\int_{G}\frac{\vert u(x)\vert^p}{\dist (x,\partial G)^{sp} }\,dx
\le c_0 \lvert E_G u\rvert_{W^{s,p}(\R^n)}^p \le c_0c_1\lvert u\rvert_{W^{s,p}(G)}^p\,.
\end{align*}
Hence, the $(s,p)$-Hardy inequality holds with constant $c_2=c_0c_1$.

Suppose then that the assumption in (2) holds;
in other words, there is $c_2>0$ such that, for every $u\in C^\infty_0(G)$,
\[
\int_{G}\frac{\lvert u(x)\rvert^p}{\dist(x,\partial G)^{sp}}\,dx \le c_2\lvert u\rvert_{W^{s,p}(G)}^p\,.
\]
By Lemma \ref{l.chi_bounded} we then have
\begin{align*}
\lvert E_G u\rvert_{W^{s,p}(\R^n)}^p
\le \lvert u\rvert_{W^{s,p}(G)}^p + C \int_G \frac{\lvert u(x)\rvert^p}{\dist(x,\partial G)^{sp}}\,dx
\le (1+C c_2) \lvert u\rvert_{W^{s,p}(G)}^p\,
\end{align*}
for every $u\in C^\infty_0(G)$,
and thus $E_G$ is bounded.
\end{proof}

The following corollary is an immediate consequence of Corollary~\ref{cor:fractional_hardy_uniform} 
and Theorem~\ref{equiv}.

\begin{corollary}
Let $1<p<\infty$ and $0<s<1$ be such that $sp<n$.
Suppose that $G\subset \R^n$ is a (bounded) uniform domain whose boundary is 
(locally) $(s,p)$-uniformly fat.
Then $E_G$ is bounded.
\end{corollary}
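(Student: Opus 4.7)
The plan is to deduce this corollary in one line from the two indicated results, using no further machinery. First, I would verify that the hypotheses of Corollary~\ref{cor:fractional_hardy_uniform} are met verbatim: $G$ is (bounded and) uniform, the boundary $\partial G$ is (locally) $(s,p)$-uniformly fat, and the assumption $sp<n$ (together with $s\in(0,1)$, $p\in(1,\infty)$) gives $0<sp<n$. Thus Corollary~\ref{cor:fractional_hardy_uniform} applies and $G$ admits an $(s,p)$-Hardy inequality.

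Next I would invoke part~(2) of Theorem~\ref{equiv}, which states that whenever an open set $G\subset\R^n$ admits an $(s,p)$-Hardy inequality (with $sp<n$), the zero-extension operator $E_G$ is automatically bounded. Note that this direction of the equivalence does \emph{not} require any $(s,p)$-uniform fatness of the complement $G^c$; the fatness assumption we have is on $\partial G$, which is exactly what was used in Corollary~\ref{cor:fractional_hardy_uniform}. Composing the two implications yields the boundedness of $E_G$ and completes the proof.

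There is essentially no obstacle to overcome: the work is already contained in Corollary~\ref{cor:fractional_hardy_uniform} (which in turn rests on Theorem~\ref{thm:fractional_hardy_visible} via Proposition~\ref{prop:unif}) and in the elementary Lemma~\ref{l.chi_bounded}, which provides the pointwise bookkeeping
\[
\lvert E_G u\rvert_{W^{s,p}(\R^n)}^p \le \lvert u\rvert_{W^{s,p}(G)}^p + C\int_{G}\frac{\lvert u(x)\rvert^p}{\dist(x,\partial G)^{sp}}\,dx
\]
used in deriving part~(2) of Theorem~\ref{equiv}. The only point to double-check is compatibility of the parameter ranges, which is immediate. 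Hence the corollary reduces to a direct citation of the preceding two results.
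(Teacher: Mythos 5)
Your argument is exactly the paper's: it states the corollary is an immediate consequence of Corollary~\ref{cor:fractional_hardy_uniform} (giving the $(s,p)$-Hardy inequality) combined with Theorem~\ref{equiv}(2) (Hardy inequality implies $E_G$ bounded, with no fatness of $G^c$ needed in that direction). The proposal is correct and complete.
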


This corollary applies for instance to the usual Koch snowflake domain $G$
with $2-sp<\log 4/ \log 3$.

\subsection{Removability}\label{sect:remove}

Finally, let us record the following removability result for fractional Hardy inequalities.

\begin{theorem}\label{removal}
Let $1<p<\infty$ and $0<s<1$ be such that $sp<n$.
Assume that $G'\subset G\subset\R^n$ are open sets such that
$(G')^c$ is $(s,p)$-uniformly fat, $G$ admits an $(s,p)$-Hardy inequality,
and $|G\setminus G'|=0$.
Then $G'$ admits an $(s,p)$-Hardy inequality.
\end{theorem}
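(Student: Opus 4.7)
The plan is to bootstrap: start from the \emph{global} fractional Hardy inequality on $G'$ (available by fatness of $(G')^c$) and then use the Hardy inequality on the larger set $G$ to bound the resulting boundary-weighted term. The key observation that makes things work is that although one only has $\dist(x,\bdry G')\le\dist(x,\bdry G)$ for $x\in G'$ (which goes the ``wrong'' way on the left-hand sides of the Hardy inequalities), the hypothesis $|G\setminus G'|=0$ lets us identify $(G')^c$ and $G^c$ up to a set of measure zero, and this is the \emph{right} direction on the extension-integral side.

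Fix $u\in C_0^\infty(G')$. Since $G'\sub G$, the support of $u$ is a compact subset of $G$, and so $u\in C_0^\infty(G)$ as well. Because $(G')^c$ is $(s,p)$-uniformly fat, Theorem~\ref{fractional_hardy_fat} applied to $G'$ yields
\[
\int_{G'}\frac{|u(x)|^p}{\dist(x,\bdry G')^{sp}}\,dx
\le c_0\,|E_{G'}u|_{W^{s,p}(\R^n)}^p.
\]
Next I would expand the right-hand side: since $E_{G'}u$ vanishes outside $G'$,
\[
|E_{G'}u|_{W^{s,p}(\R^n)}^p
= |u|_{W^{s,p}(G')}^p + 2\int_{G'}|u(x)|^p\int_{(G')^c}\frac{dy}{|x-y|^{n+sp}}\,dx.
\]
Here is where the hypothesis $|G\setminus G'|=0$ enters: up to a null set, $(G')^c=G^c$, so the inner integral equals $\int_{G^c}|x-y|^{-n-sp}\,dy$, which is bounded by a constant multiple of $\dist(x,\bdry G)^{-sp}$ via the standard estimate used in~\eqref{e.integral} of Lemma~\ref{l.chi_bounded}. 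Thus
\[
|E_{G'}u|_{W^{s,p}(\R^n)}^p
\le |u|_{W^{s,p}(G')}^p + C\int_{G'}\frac{|u(x)|^p}{\dist(x,\bdry G)^{sp}}\,dx.
\]

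Now I would invoke the $(s,p)$-Hardy inequality for $G$ (with $u\in C_0^\infty(G)$) to dominate the remaining boundary-weighted integral. Combined with $|G\setminus G'|=0$ (so $G'\times G'$ and $G\times G$ are equivalent as domains of integration), this gives
\[
\int_{G'}\frac{|u(x)|^p}{\dist(x,\bdry G)^{sp}}\,dx
\le \int_{G}\frac{|u(x)|^p}{\dist(x,\bdry G)^{sp}}\,dx
\le c\,|u|_{W^{s,p}(G)}^p = c\,|u|_{W^{s,p}(G')}^p.
\]
Chaining the three displays yields $\int_{G'}|u|^p\dist(\cdot,\bdry G')^{-sp}\,dx\le c'\,|u|_{W^{s,p}(G')}^p$, which is the desired $(s,p)$-Hardy inequality on $G'$.

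The only delicate point I anticipate is checking that the null set $G\setminus G'$ really does not spoil any of the three integral manipulations: it needs to be handled both when rewriting $(G')^c$ as $G^c$ in the extension term and when identifying the double integrals over $G\times G$ and $G'\times G'$. Both reductions are routine because the integrands are nonnegative and measurable, and because $u$ (hence $E_{G'}u$) is supported in $G'$, so no essential cancellation is lost.
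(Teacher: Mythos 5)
Your argument is correct and is essentially the paper's proof with Theorem~\ref{equiv} unpacked: the paper invokes Theorem~\ref{equiv}(2) (Hardy on $G$ implies $E_G$ is bounded), uses $|G\setminus G'|=0$ to transfer boundedness to $E_{G'}$, and concludes via Theorem~\ref{equiv}(1), whose ingredients (Theorem~\ref{fractional_hardy_fat} for $G'$ and the cross-term estimate~\eqref{e.integral}) are exactly what you chain together directly. The only cosmetic difference is that you use the null-set hypothesis a second time to replace $(G')^c$ by $G^c$ in the cross term, whereas the paper works with $g=(E_{G'}u)|_G\in C_0^\infty(G)$ so that only the identification of the $W^{s,p}$ seminorms over $G$ and $G'$ is needed; both reductions are valid since the integrands are nonnegative.
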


\begin{proof}
By Theorem \ref{equiv}(2), there is a constant $c_1>0$ such that, for every $g\in C^\infty_0(G)$,
\begin{equation}\label{ext}
\lvert E_G g\rvert^p_{W^{s,p}(\R^n)} \le c_1 \lvert g\rvert^p_{W^{s,p}(G)}\,.
\end{equation}
Let $u\in C^\infty_0(G')$, and define
$g:=(E_{G'} u)\lvert_G\in C^\infty_0(G)$. An application of the previous inequality~\eqref{ext} to this
function yields
\begin{align*}
\lvert E_{G'} u\rvert^p_{W^{s,p}(\R^n)}
& =\lvert E_G g\rvert_{W^{s,p}(\R^n)}^p  \le c_1 \lvert g\rvert_{W^{s,p}(G)}^p
   = c_1 \big\lvert (E_{G'} u)\lvert_G \big\rvert_{W^{s,p}(G)}^p\\
&=c_1\int_G \int_G \frac{\lvert E_{G'} u(x) - E_{G'} u(y)\lvert ^p}{\lvert x-y\rvert^{n+sp}}\\
&=c_1\int_{G'} \int_{G'} \frac{\lvert u(x) - u(y)\lvert ^p}{\lvert x-y\rvert^{n+sp}}
=c_1\lvert u\rvert_{W^{s,p}(G')}^p\,.
\end{align*}
In the penultimate step, we used the assumption that $|G\setminus G'|=0$.
As a conclusion, we have shown that
$\lvert E_{G'} u\rvert^p_{W^{s,p}(\R^n)}  \le c_1\lvert u\rvert_{W^{s,p}(G')}^p$
for every $u\in C^\infty_0(G')$. By Theorem~\ref{equiv}, we find
that $G'$ admits an $(s,p)$-Hardy inequality.
\end{proof}

\begin{remark}
Let us indicate a possible application of
Theorem \ref{removal}.
By Corollary \ref{cor:fractional_hardy_uniform}, we  may begin with a uniform domain $G\subset \R^n$ whose boundary
is $(s,p)$-uniformly fat with  $1<p<\infty$ and $0<sp<n$.
Let $K\subset G$ be a closed $(s,p)$-uniformly fat set with zero measure. Then
it follows from Theorem~\ref{removal} that
the open set $G':=G\setminus K$ admits
an $(s,p)$-Hardy inequality.
\end{remark}

\subsection*{Acknowledgements}
The research is supported  by
the Academy of Finland, grants no.\ 135561 (L.I. and H.T.) and 252108 (J.L.).

\end{document}